\def\rr{{\mathbb R}}
\def\rd{{{\rr}^d}}
\def\rn{{{\rr}^n}}
\def\zz{{\mathbb Z}}
\def\nn{{\mathbb N}}
\def\hh{{\mathbb H}}
\def\bbg{{\mathbb G}}
\def\cx{{\mathcal X}}
\def\lcz{{\lceil}}
\def\rcz{{\rceil}}
\def\cd{{\mathcal D}}
\def\cl{{\mathcal L}}
\def\cb{{\mathcal B}}
\def\fz{\infty}
\def\az{\alpha}
\def\loc{{\mathop\mathrm{\,loc\,}}}
\def\lz{\lambda}
\def\dz{\delta}
\def\ez{\epsilon}
\def\kz{\kappa}
\def\bz{\beta}
\def\gz{{\gamma}}
\def\wz{\widetilde}
\def\ol{\overline}
\def\gl{g_\lz^*}
\def\glz{g_{\lz,\,0}^*}
\def\glf{g_{\lz,\,\fz}^*}
\def\hs{\hspace{0.3cm}}
\def\ls{\lesssim}
\def\gs{\gtrsim}
\def\lt{{L^2(\cx)}}
\def\bmo{{\mathop\mathrm{BMO}}}
\def\sbmo{{\mathop\mathrm{bmo}}}
\def\sblo{{\mathop\mathrm{blo}}}
\def\blo{{\mathop\mathrm{BLO}}}
\def\bmoz{{\mathrm{BMO}_\rho(\cx)}}
\def\bloz{{\mathrm{BLO}_\rho(\cx)}}
\def\einf{{\mathop{\mathrm{\,essinf\,}}}}
\def\dsum{\displaystyle\sum}
\def\diam{{\mathop\mathrm{\,diam\,}}}
\def\dint{\displaystyle\int}
\def\gfz{\genfrac{}{}{0pt}{}}
\def\dfrac{\displaystyle\frac}
\def\dsup{\displaystyle\sup}
\def\r{\right}
\def\lf{\left}
\newtheorem{thm}{Theorem}[section]
\newtheorem{lem}{Lemma}[section]
\newtheorem{prop}{Proposition}[section]
\newtheorem{rem}{Remark}[section]
\newtheorem{cor}{Corollary}[section]
\newtheorem{defn}{Definition}[section]
\newtheorem{example}{Example}[section]
\numberwithin{equation}{section}
\begin{document}

\arraycolsep=1pt

\title{{\vspace{-5cm}\small\hfill\bf Bull. Sci. Math., to appear}\\
\vspace{4cm}\Large\bf Boundedness of Lusin-area and $g_\lz^*$ Functions on
Localized BMO Spaces over Doubling Metric Measure
Spaces\footnotetext{\hspace{-0.35cm} 2000 {\it Mathematics Subject
Classification}. {Primary 42B25; Secondary 42B30, 51F99.}
\endgraf{\it Key words and phrases.} doubling metric measure space,
chain ball property, weak (monotone) geodesic property,
annular decay property, admissible
function, Schr\"odinger operator, localized BMO space, localized BLO
space, Lusin-area function, $\gl$ function.
\endgraf
The first author was supported by Chinese Universities Scientific
Fund (Grant No. 2009JS34), the second author was supported by
Grant-in-Aid for Scientific Research (C), No. 20540167, Japan
Society for the Promotion of Science, and the third (corresponding)
author was supported by the National Natural Science Foundation
(Grant No. 10871025) of China.}}
\author{Haibo Lin, Eiichi Nakai and Dachun Yang\,\footnote{Corresponding
author.}
\vspace{0.3cm}\\
Dedicated to Professor K\^oz\^o Yabuta in celebration of his 70th
birthday
}
\date{ }
\maketitle

\vspace{-0.8cm}

\begin{center}
\begin{minipage}{13.6cm}\small
{\noindent{\bf Abstract.}
Let ${\mathcal X}$ be a doubling metric
measure space. If ${\mathcal X}$ has the $\delta$-annular decay
property for some $\delta\in (0,\,1]$, the authors then
establish the boundedness of the
Lusin-area function, which is defined via kernels modeled on the
semigroup generated by the Schr\"odinger operator, from localized
spaces ${\rm BMO}_\rho({\mathcal X})$ to ${\rm BLO}_\rho({\mathcal
X})$ without invoking any regularity of considered kernels. The same
is true for the $\gl$ function and unlike the Lusin-area
function, in this case, ${\mathcal X}$ is not necessary to have the
$\dz$-annular decay property. Moreover, for any metric space, the authors
introduce the weak geodesic property and the monotone geodesic property,
which are proved to be respectively equivalent to
the chain ball property of Buckley. Recall that Buckley proved
that any length space has the chain ball property
and, for any metric space equipped with a doubling measure,
the chain ball property implies the $\delta$-annular decay property
for some $\dz\in (0,1]$. Moreover, using some results on pointwise multipliers of
${\rm bmo}(\rr)$, the authors construct a counterexample to show
that there exists a nonnegative function which is in ${\rm
bmo}(\rr)$, but not in ${\rm blo}(\rr)$; this further indicates that
the above boundedness of the Lusin-area and $\gl$ functions even in $\rd$
with the Lebesgue measure or the Heisenberg
group also improves the existing results.}
\end{minipage}
\end{center}

\section{Introduction\label{s1}}

\hskip\parindent Since the space ${\rm BMO}(\rd)$ of functions
with bounded mean oscillation on $\rd$ was introduced by John and
Nirenberg \cite{jn61}, it then plays an important role in harmonic
analysis and partial differential equations. It is well-known that
${\rm BMO}(\rd)$ is the dual space of the Hardy space $H^1(\rd)$
(see, for example, \cite{s93,gr08}), and also a good substitute of
$L^\fz(\rd)$ in the study of boundedness of operators. However, the
space ${\rm BMO}(\rd)$ is essentially related to the Laplacian
$\Delta$, where $\Delta\equiv\sum_{j=1}^d \frac{\partial^2}{\partial
x_j^2}$.

On the other hand, there exists an increasing interest on the study
of Schr\"odinger operators on $\rd$ and the sub-Laplace
Schr\"odinger operators on connected and simply connected nilpotent
Lie groups with nonnegative potentials satisfying the reverse
H\"older inequality; see, for example,
\cite{f83,z99,s95,l99,d05,dz99,dgmtz05,ll08,yz08,hl1,hl2}. Let
$\cl\equiv -\Delta+V$ be the Schr\"odinger operator on $\rd$, where
the potential $V$ is a nonnegative locally integrable function.
Denote by $\cb_q(\rd)$ the class of nonnegative functions satisfying
the reverse H\"older inequality of order $q$. For
$V\in\cb_{d/2}(\rd)$ with $d\ge3$, Dziuba\'nski et al
\cite{d05,dz99,dgmtz05} studied the BMO-type space $\bmo_\cl(\rd)$
and the Hardy space $H^1_\cl(\rd)$ and, especially, proved that the
dual space of $H^1_\cl(\rd)$ is $\bmo_\cl(\rd)$. Moreover, they
obtained the boundedness on these spaces of the Littlewood-Paley
$g$-function associated to $\cl$. Let $\cx$ be an RD-space in
\cite{hmy2}, which means that $\cx$ is a space of homogeneous type
in the sense of Coifman and Weiss \cite{cw71, cw77} with the
additional property that a reverse doubling condition holds. Let
$\rho$ be a given admissible function modeled on the known auxiliary
function determined by $V\in\cb_{d/2}(\rd)$ (see \cite{yz08} or
\eqref{e2.3} below). The localized Hardy space $H^1_\rho(\cx)$, the
BMO-type space $\mathrm{\,BMO}_\rho({\mathcal X})$ and the BLO-type
space $\mathrm{\,BLO}_\rho({\mathcal X})$ associated with $\rho$
were introduced and studied in \cite{yz08,yyz}. Moreover, the
boundedness from $\mathrm{\,BMO}_\rho({\mathcal X})$ to
$\mathrm{\,BLO}_\rho({\mathcal X})$ of several maximal operators and
the Littlewood-Paley $g$-function, which are defined via kernels
modeled on the semigroup generated by the Schr\"odinger operator,
was obtained in \cite{yyz}.

Let $\cx$ be a doubling metric measure space.
The main purpose of this paper is to investigate behaviors of the
Lusin-area and $\gl$ functions on localized ${\rm BMO}$ spaces over
$\cx$, which is not necessary to be an
RD-space. So far, it is still not clear
whether the doubling property of $\cx$
is sufficient to guarantee the boundedness of
the Lusin-area function on these localized ${\rm BMO}$ spaces
over $\cx$. However, in this paper, when ${\mathcal X}$ has the $\dz$-annular decay
property for some $\dz\in (0,1]$ which was
introduced by Buckley in \cite{b99}, we
establish the boundedness of the Lusin-area
function, which is defined via kernels modeled on the semigroup
generated by the Schr\"odinger operator, from localized spaces ${\rm
BMO}_\rho({\mathcal X})$ to ${\rm BLO}_\rho({\mathcal X})$ without
invoking any regularity of considered kernels. The corresponding
boundedness of the $\gl$ function from ${\rm BMO}_\rho({\mathcal
X})$ to ${\rm BLO}_\rho({\mathcal X})$ is also obtained in this
paper. Moreover, an interesting phenomena is that unlike the
Lusin-area function, the boundedness of the $\gl$ function needs
neither the regularity of the kernels nor the $\dz$-annular decay
property of $\cx$, which reflects the difference between the
Lusin-area function and the $\gl$ function. These results are new
even on $\rd$ with the Lebesgue measure and the Heisenberg group,
and apply in a wide range of settings, for instance, to the
Schr\"odinger operator or the degenerate Schr\"odinger operator on
${{\mathbb R}}^d$, or the sub-Laplace Schr\"odinger operator on
Heisenberg groups or connected and simply connected nilpotent Lie
groups. Moreover, via some results on the pointwise multiplier of
${\rm bmo\,}(\rr)$ from \cite{ny85}, we construct a counterexample
to show that there exists a nonnegative function which is in ${\rm
bmo\,}(\rr)$ of Goldberg \cite{g79}, but not in ${\rm blo\,}(\rr)$
of \cite{hyy}. Thus, ${\rm blo\,}(\rr)\cap\{f\ge 0\}$ is a proper
subspace of ${\rm bmo\,}(\rr)$, which further indicates that our
above results on the boundedness
of the Lusin-area and $\gl$ functions even in $\rd$
with the Lebesgue measure or the Heisenberg
group also improve the existing results.

Moreover, motivated by Tessera \cite{te}, we introduce two
properties, for any metric space, the weak geodesic property and the
monotone geodesic property, which are slightly stronger variants of the
corresponding ones of Tessera \cite{te} (see Remark \ref{r4.1}
below) and are then proved to be respectively equivalent to the
chain ball property introduced by Buckley \cite{b99}. It was proved
by Buckley \cite{b99} that any length space, namely, the metric
space in which the distance between any pair of points equals the
infimum of the lengths of rectifiable paths joining them, has the
chain ball property and, for any metric space equipped with a
doubling measure, the chain ball property implies the
$\delta$-annular decay property for some $\dz\in (0,1]$. As an
application, we prove that any length space equipped with a doubling
measure has the weak geodesic property and hence the $\dz$-annular
decay property for some $\dz\in (0,1]$ without using the property of
John domains as in \cite{b99}.

This paper is organized as follows. Let $\cx$ be a doubling metric
measure space  and $\rho$ an admissible function on $\cx$. In
Section 2, we first recall the notions of the spaces
$\bmo_\rho(\cx)$ and $\blo_\rho(\cx)$. When $\cx=\rr$,
we construct a counterexample to show that there exists
a nonnegative function $f\in\sbmo(\rr)$, but $f\not\in{\rm
blo}(\rr)$; see Proposition \ref{p2.1} below.

In Section 3, if $\cx$ has the $\dz$-annular decay property for
some $\dz\in (0,1]$ and the Littlewood-Paley $g$-function $g(f)$
is bounded on $L^2(\cx)$, we
prove that if $f\in\mathrm{\,BMO}_\rho({\mathcal X})$, then
$[S(f)]^2\in\mathrm{\,BLO}_\rho({\mathcal X})$ with norm no more
than $C\|f\|^2_{\mathrm{\,BMO}_\rho({\mathcal X})}$, where $C$ is a
positive constant independent of $f$; see Theorem \ref{t3.1} below.
As a corollary, we obtain the boundedness of the Lusin-area function
from $\mathrm{\,BMO}_\rho({\mathcal X})$ to
$\mathrm{\,BLO}_\rho({\mathcal X})$; see Corollary \ref{c3.1} below.
The corresponding results for the $\gl$ function $\gl(f)$ are
established in Theorem \ref{t3.2} and Corollary \ref{c3.2} below,
where $\cx$ is not necessary to have the $\dz$-annular decay
property. Theorems \ref{t3.1} and \ref{t3.2} and Corollaries
\ref{c3.1} and \ref{c3.2} are true for the Schr\"odinger operator or
the degenerate Schr\"odinger operator on ${\mathbb R}^d$, or the
sub-Laplace Schr\"odinger operator on Heisenberg groups or connected
and simply connected nilpotent Lie groups. Moreover, for these
specific examples, it is known that the corresponding
Littlewood-Paley $g$-function is bounded on $L^2(\cx)$; see
\cite{yyz} for the detailed explanations.

We remark that the results obtained in Section 3 are also new even
on $\rd$ with the Lebesgue measure and the Heisenberg group, since
we do not need any regularity of involved kernels. However, to
establish the boundedness of the Lusin-area function on a doubling
metric measure space $\cx$, we need certain regularity of $\cx$,
namely, the $\dz$-annular decay property of $\cx$, which reflects
the speciality of the Lusin-area function, comparing with the
corresponding results of the $\gl$ function. Moreover, $\rd$ with
the Lebesgue measure and the Heisenberg group have the $\dz$-annular
decay property.

In Section 4, for any metric space, we introduce the notions of the
weak geodesic property and the monotone geodesic property in
Definition \ref{d4.1} below, which are proved respectively
equivalent to the chain ball property of Buckley in Theorem
\ref{t4.1} below. As an application of this result and \cite[Theorem
2.1]{b99}, we obtain in Corollary \ref{c4.1} below that for any
metric space equipped with a doubling measure, either the weak
geodesic property or the monotone geodesic property guarantees its
$\dz$-annular decay property for some $\dz\in (0,1]$. As an
application of Corollary \ref{c4.1}, we prove that any length space
equipped with a doubling measure has the $\dz$-annular decay property
for some $\dz\in(0,1]$; see Proposition \ref{p4.1} below.

Finally, we make some conventions. Throughout this paper, we always use
$C$ to denote a positive constant that is independent of the main
parameters involved but whose value may differ from line to line.
Constants with subscripts, such as $C_1$, do not change in
different occurrences. If $f\le Cg$, we then write $f\ls g$ or $g\gs
f$; and if $f \ls g\ls f$, we then write $f\sim g.$
We also use $B$ to denote a ball of $\cx$, and for $\lz>0$,
$\lz B$ denotes the ball with the same center as $B$, but radius
$\lz$ times the radius of $B$. Moreover, set
$B^\complement\equiv\cx\setminus B$. Also, for any set
$E\subset\cx$, $\chi_E$ denotes its characteristic function. For all
$f\in L^1_\loc(\cx)$ and balls $B$, we always set
$f_{B}\equiv\frac1{\mu(B)}\int_Bf(y)\,d\mu(y)$.

\section{The spaces $\bmo_\rho(\cx)$ and $\blo_\rho(\cx)$\label{s2}}

\hskip\parindent In this section, we first recall the notions of
localized $\bmo$ spaces over doubling metric measure spaces.
Moreover, visa some results on pointwise multipliers of ${\rm
bmo}(\rr)$, an example is constructed to show that there exists a
nonnegative function which is in ${\rm bmo}(\rr)$, but not in ${\rm
blo}(\rr)$.

We begin with the notions of doubling metric measure spaces
\cite{cw71,cw77} and admissible functions \cite{yz08}.

\begin{defn}\label{d2.1} \rm
Let $(\cx,\, d)$ be a metric space endowed with a regular Borel
measure $\mu$ such that all balls defined by $d$ have finite and
positive measure. For any $x\in \cx $ and $r\in(0, \fz)$, set the
ball $B(x,r)\equiv\{y\in \cx :\ d(x,y)<r\}.$ The triple
$(\cx,\,d,\,\mu)$ is called a doubling metric measure space if there
exists a constant $C_1\in[1, \fz)$ such that for all $x\in \cx $ and
$r\in(0, \fz)$,
$\mu(B(x, 2r))\le C_1\mu(B(x,r))\ (\mathrm{doubling\ property}).$
\end{defn}

From Definition \ref{d2.1}, it is easy to see that there exists
positive constants $C_2$ and $n$ such that for all
    $x\in \cx$, $r\in(0, \fz)$ and $\lz\in[1, \fz)$,
\begin{equation}\label{e2.1}
 \mu(B(x,\lz r))\le C_2\lz^ n\mu(B(x,r)).
\end{equation}

In what follows, we always let $\ol{B(x,\,r)}\equiv\{y\in \cx :\
d(x,y)\le r\}$, $V_r(x)\equiv\mu(B(x,\,r))$ and
$V(x,\,y)\equiv\mu(B(x,\,d(x,\,y)))$ for all $x,\,y\in\cx$ and
$r\in(0,\,\fz)$.

\begin{defn}[\cite{yz08}]\label{d2.2}\rm
A positive function $\rho$ on $\cx$ is called admissible if there
exist positive constants $C_0$ and $k_0$ such that for all
$x,\,y\in\cx$,
\begin{equation}\label{e2.2}
\frac1{\rho(x)} \le
C_0\frac1{\rho(y)}\lf(1+\frac{d(x,\,y)}{\rho(y)}\r)^{k_0}.
\end{equation}
\end{defn}

Obviously, if $\rho$ is a constant function, then $\rho$ is
admissible. Another non-trivial class of admissible functions is
given by the well-known reverse H\"older class $\mathcal\cb_q(\cx,
d, \mu)$ (see, for example \cite{ge73,m72,s95} for its definition on
$\rn$, and \cite{st89} for its definition on spaces of homogenous
type). Recall that a nonnegative potential $V$ is said to be in
$\mathcal\cb_q(\cx, d, \mu)$ (for short, $\cb_q(\cx)$) with
$q\in(1,\,\fz]$ if there exists a positive constant $C$ such that
for all balls $B$ of $\cx$,
$$\lf\{\frac1{|B|}\dint_B[V(y)]^q\,dy\r\}^{1/q}
\le \frac{C}{|B|}\dint_BV(y)\,dy$$ with the usual modification made
when $q=\fz$. It was proved in \cite[pp.\,8-9]{st89} that if $V\in
\cb_q(\cx)$ for some $q\in(1, \fz]$ and the measure $V(z)d\mu(z)$
has the doubling property, then $V$ is an ${\cal A}_p(\cx, d,
\mu)$-weight for some $p\in[1, \fz)$ in the sense of Muckenhoupt,
and also $V\in \cb_{q+\ez}(\cx)$ for some $\ez>0$. Here it should be
pointed out that, generally speaking, $V\in\cb_q(\cx)$ cannot guarantee the
doubling property of $V(z)d\mu(z)$, but when $\mu(B(x, r))$ is
continuous respect to $r$ for all $x\in\cx$ or $\cx$ has the
$\dz$-annular decay property (see Definition \ref{d3.1} below),
$V\in\cb_q(\cx)$ does imply the doubling property of $V(z)d\mu(z)$
by \cite[Theorem 17]{st89} or \cite[Proposition 3.7]{ma07},
respectively. Following \cite{s95}, for all $x\in\cx$, set
\begin{equation}\label{e2.3}
\rho(x)\equiv\sup\lf\{r>0:\hs \frac{r^2}
{\mu(B(x,\,r))}\dint_{B(x,\,r)}V(y)\,dy\le 1\r\};
\end{equation}
see also \cite{yz08}. It was proved in \cite[Proposition 2.1]{yz08}
that if the measure $V(z)d\mu(z)$ has the doubling property, then
$\rho$ in \eqref{e2.3} is an admissible function when $n\ge1$,
$q>\max\{1,\,n/2\}$ and $V\in \cb_q(\cx)$.

Now we recall the notions of the spaces $\bmo_\rho(\cx)$ and
$\blo_\rho(\cx)$ (see \cite{yyz}).

\begin{defn}[\cite{yyz}]\label{d2.3}\rm
 Let $\rho$ be an admissible function on $\cx$,
$\cd\equiv\{B(x,\,r)\subset\cx:\ x\in\cx,\ r\ge\rho(x)\}$ and
$q\in[1,\,\fz)$. A function $f\in L^q_{\loc}(\cx)$ is said to be in
the space $\bmo^q_\rho(\cx)$ if
$$\begin{array}{cl}
\|f\|_{\bmo_\rho^q(\cx)}\equiv&\dsup_{B\notin\cd}
\lf\{\frac1{\mu(B)}\int_B|f(y)-f_B|^qd\mu(y)\r\}^{1/q}\\
&+\dsup_{B\in\cd}
\lf\{\frac1{\mu(B)}\int_B|f(y)|^qd\mu(y)\r\}^{1/q}<\fz.
\end{array}$$
\end{defn}

\begin{rem}\label{r2.1}\rm
We denote $\bmo^1_\rho(\cx)$ simply by $\bmoz$. The space
$\bmo_\rho(\rd)$ when $\rho\equiv1$ was first introduced by Goldberg
\cite{g79}. If $q>\frac d2$, $V\in \cb_q(\rd)$ and $\rho$ is as in
\eqref{e2.3}, then $\bmo_\rho(\rd)$ is just the space
$\bmo_\cl(\rd)$ introduced by
 Dziub\'anski et al in \cite{dgmtz05}.
For all $q\in [1,\fz)$, $\bmo^q_\rho(\cx)\subsetneq \bmo(\cx)$.
\end{rem}

The following technical lemma is just Lemma 3.1 in \cite{yyz}.

\begin{lem}\label{l2.1}
Let $\rho$ be an admissible function on $\cx$ and $q\in[1,\,\fz)$.
Then $\bmoz=\bmo^q_\rho(\cx)$ with equivalent norms.
\end{lem}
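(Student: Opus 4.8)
The plan is to establish the equivalence through the two one-sided norm estimates
\[
\|f\|_{\bmoz}\le\|f\|_{\bmo^q_\rho(\cx)}
\quad\text{and}\quad
\|f\|_{\bmo^q_\rho(\cx)}\ls\|f\|_{\bmoz},
\]
the first being the trivial inclusion and the second the substantial one. The first follows immediately from H\"older's inequality: for any ball $B$ and $q\in[1,\fz)$ one has $\frac1{\mu(B)}\dint_B|g|\,d\mu\le(\frac1{\mu(B)}\dint_B|g|^q\,d\mu)^{1/q}$, applied with $g=f-f_B$ when $B\notin\cd$ and with $g=f$ when $B\in\cd$. Hence it suffices to prove, for all $f\in\bmoz$, the two bounds
\[
\dsup_{B\notin\cd}\lf(\frac1{\mu(B)}\dint_B|f-f_B|^q\,d\mu\r)^{1/q}\ls\|f\|_{\bmoz}
\quad\text{and}\quad
\dsup_{B\in\cd}\lf(\frac1{\mu(B)}\dint_B|f|^q\,d\mu\r)^{1/q}\ls\|f\|_{\bmoz}.
\]

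The engine for both is a John--Nirenberg type exponential estimate. For the oscillation part I would first prove that there exist positive constants $C$ and $c$, depending only on $q$ and the structural constants of $\cx$ and $\rho$, such that for every $B\notin\cd$ and every $\az>0$,
\[
\mu\lf(\{y\in B:\ |f(y)-f_B|>\az\}\r)\le C\exp\lf(-c\az/\|f\|_{\bmoz}\r)\mu(B).
\]
This is obtained by the usual Calder\'on--Zygmund stopping-time iteration on the doubling space $\cx$: subdivide $B$ at a height comparable to $\|f\|_{\bmoz}$, pass to the selected subballs, and iterate. The one point special to the localized setting, which must be checked, is that every subball produced in the iteration again lies outside $\cd$, so that the defining oscillation inequality of $\|f\|_{\bmoz}$ is available at each step. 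This is where the admissibility \eqref{e2.2} of $\rho$ enters: if $B=B(x_0,r)\notin\cd$, i.e. $r<\rho(x_0)$, then for any subball $B(y,s)\subset B$ one has $d(x_0,y)<r<\rho(x_0)$, so \eqref{e2.2} forces $\rho(y)\sim\rho(x_0)$, and after first reducing to radii $r\le c_0\rho(x_0)$ for a suitably small $c_0$ one obtains $s\le r<\rho(y)$, that is, $B(y,s)\notin\cd$. Integrating the distributional estimate against $q\az^{q-1}\,d\az$ then yields the desired $L^q$ oscillation bound, and the complementary annular range $c_0\rho(x_0)\le r<\rho(x_0)$ is recovered from the doubling property together with the bound just proved on the concentric ball of radius $c_0\rho(x_0)$.

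For the size part on $B\in\cd$ I would argue analogously but anchor the stopping-time iteration at the size condition $\frac1{\mu(B)}\dint_B|f|\,d\mu\le\|f\|_{\bmoz}$, which holds precisely because $B\in\cd$. Running the decomposition downward in scale, a selected subball either still belongs to $\cd$, in which case the size condition again applies, or it has left $\cd$, in which case one switches to the oscillation estimate established above. The admissibility of $\rho$ again guarantees that this transition occurs only across scales comparable to $\rho$, so the two halves of the $\bmoz$-norm can be matched up with uniformly controlled constants. This produces the exponential bound $\mu(\{y\in B:\ |f(y)|>\az\})\le C\exp(-c\az/\|f\|_{\bmoz})\mu(B)$ for $B\in\cd$, and integrating as before gives the $L^q$ size bound.

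The main obstacle is exactly the bookkeeping at the boundary of $\cd$: one must ensure that throughout the stopping-time iterations the subballs remain in the regime (oscillation-controlled or size-controlled) for which an estimate is available, and that the comparability constant coming from \eqref{e2.2} never degenerates. Once the reduction to a fixed small radius $c_0\rho$ is made and the two regimes are separated by the scale $\rho$, the remaining computations — the Calder\'on--Zygmund iteration, the doubling of measures of concentric balls, and the passage from the exponential distributional estimate to the $L^q$ bound via the layer-cake formula — are routine.
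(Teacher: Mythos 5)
The paper itself offers no proof of this lemma; it is simply quoted as Lemma 3.1 of \cite{yyz}, so there is no in-paper argument to compare with. Your strategy (H\"older for the easy direction, a John--Nirenberg estimate for the converse) is the right engine, but it is considerably over-engineered, and the extra machinery introduces one step that fails as written. You do not need to track whether the Calder\'on--Zygmund subballs remain outside $\cd$, nor do you need a second, size-anchored stopping-time iteration for $B\in\cd$. Indeed, for any $B\in\cd$ one has $\frac1{\mu(B)}\int_B|f-f_B|\,d\mu\le\frac2{\mu(B)}\int_B|f|\,d\mu\le2\|f\|_{\bmoz}$, hence $\|f\|_{\bmo(\cx)}\le2\|f\|_{\bmoz}$; the classical John--Nirenberg inequality on a doubling metric measure space, applied to $f$ as an element of $\bmo(\cx)$ with no reference to $\cd$ at all, then gives
\[
\sup_{{\rm balls}\,B\subset\cx}\lf\{\frac1{\mu(B)}\int_B|f(y)-f_B|^q\,d\mu(y)\r\}^{1/q}\ls\|f\|_{\bmo(\cx)}\ls\|f\|_{\bmoz}.
\]
This settles the oscillation half for every ball, in particular for every $B\notin\cd$, and the size half for $B\in\cd$ then follows by writing $|f|\le|f-f_B|+|f_B|$ and using $|f_B|\le\|f\|_{\bmoz}$, which holds precisely because $B\in\cd$.

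The step in your sketch that is a genuine gap is the treatment of the annular range $c_0\rho(x_0)\le r<\rho(x_0)$: the $L^q$-oscillation bound on the \emph{smaller} concentric ball $B(x_0,c_0\rho(x_0))$ together with doubling does not control $\frac1{\mu(B)}\int_B|f-f_B|^q\,d\mu$ for the larger ball $B=B(x_0,r)$, because nothing in that bound constrains $|f-f_B|$ on $B\setminus B(x_0,c_0\rho(x_0))$, where all of the mass could sit. Repairing this within your framework would require covering $B$ by boundedly many balls of radius comparable to $c_0\rho$ and matching up their averages; the global-$\bmo(\cx)$ reduction above makes that issue, and all of the $\cd$-bookkeeping that created it, unnecessary.
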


\begin{defn}[\cite{yyz}]\label{d2.4}\rm
Let $\rho$ and $\cd$ be as in Definition \ref{d2.3} and
$q\in[1,\fz)$. A function $f\in L^q_{\loc}(\cx)$ is said to be in
the space $\blo_\rho(\cx)$ if
 \begin{eqnarray*}
\|f\|_{\blo^q_\rho(\cx)}&&\equiv \sup_{B\notin \cd}
\lf\{\frac1{\mu(B)}
\int_B\lf[f(y)-{\mathop\einf_B} f\r]^q\,d\mu(y)\r\}^{1/q}\\
&&\hs+\sup_{B\in\cd}\lf\{\frac1{\mu(B)}
\int_B|f(y)|^q\,d\mu(y)\r\}^{1/q}<\fz.
\end{eqnarray*}
\end{defn}

\begin{rem}\label{r2.2}\rm
(i) The space $\mathrm{BLO}(\rd)$ with the Lebesgue measure was
introduced by Coifman and Rochberg \cite{cr80}, and extended by
Jiang \cite{j05} to the setting of $\rd$ with a non-doubling
measure. The localized $\blo$ space was first introduced in
\cite{hyy} in the setting of $\rd$ with a non-doubling measure.

(ii) For all $q\in [1,\fz)$,
$\blo^q_\rho(\cx)\subset\bmo^q_\rho(\cx)$. We denote
$\blo^1_\rho(\cx)$ simply by $\bloz$.
\end{rem}

Even when $\rho\equiv1$, it is not so difficult to show that for all
$q\in [1, \fz)$, $\blo_\rho^q(\rd)$ is a proper subspace of
$\bmo_\rho^q(\rd)$. For example, if we set $f(x)\equiv
(\log|x|)\chi_{\{|x|\le1\}}(x)$ for all $x\in\rr$, then it is
easy to show that $f\in\bmo_1^q(\rr)$, but $f\not\in\blo_1^q(\rr)$. Notice
that the above function is non-positive. However, it is not so easy
to show that there exists a nonnegative function which is in
$\bmo_\rho^q(\rd)$, but not in $\blo_\rho^q(\rd)$.

Let $\cx=(\rr,\,|\cdot|,\,dx)$. Denote $\mathrm{BMO}_\rho(\rr)$ and
$\mathrm{BLO}_\rho(\rr)$ with $\rho\equiv1$, respectively, by
$\sbmo(\rr)$ and $\sblo(\rr)$. In the rest of this section, we
construct the following interesting counterexample.

\begin{prop}\label{p2.1}
There exists a nonnegative function $f\in\sbmo(\rr)$, but
$f\not\in\sblo(\rr)$.
\end{prop}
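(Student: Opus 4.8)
The plan is to construct an explicit nonnegative function on $\rr$ and verify the two required properties by exploiting the theory of pointwise multipliers of $\sbmo(\rr)$ from \cite{ny85}. The strategy rests on the following observation: membership in $\sblo(\rr)$ is a strong one-sided condition (the local oscillation $f-\einf_B f$ must be controlled), whereas $\sbmo(\rr)$ only controls the two-sided oscillation $|f-f_B|$. The natural source of a counterexample is therefore a nonnegative function whose \emph{downward} spikes are much deeper than its mean oscillation would suggest. First I would recall from \cite{ny85} the characterization of functions $g$ that are pointwise multipliers of $\sbmo(\rr)$, namely functions in $\sbmo(\rr)$ satisfying an additional logarithmic growth condition of the type $\|g\|_{L^\fz} + \sup_B (\log(e/|B|))\,\mathrm{osc}_B(g) < \fz$ for small balls $B$. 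Such a multiplier $g$, when applied to a fixed unbounded function $h \in \sbmo(\rr)$ of logarithmic type (for instance $h(x) = \log|x|$ localized), produces a product $gh \in \sbmo(\rr)$ whose oscillation can be concentrated on a sparse sequence of shrinking intervals.

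The core of the construction is to arrange, along a sequence of disjoint intervals $I_j$ with $|I_j| \to 0$ clustering at a point, for $f$ to take a large positive value on most of $I_j$ but to dip sharply down to (near) zero on a small subinterval. Concretely, I would set $f$ to be nonnegative, bounded away from zero except on a sparse family of tiny intervals where it plunges, and tune the depth and the measure of the dips using the multiplier/logarithmic-growth bounds so that the $\sbmo$ seminorm stays finite: the two-sided average $f_{I_j}$ sits near the bulk value, the deviations $|f - f_{I_j}|$ are large only on the small dips, and the smallness of those dips (in measure) keeps $\frac1{|I_j|}\int_{I_j} |f-f_{I_j}|\,dx$ uniformly bounded. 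For the $\sblo$ failure, the key point is that $\einf_{I_j} f$ equals the depth of the dip, which is near zero, so $\frac1{|I_j|}\int_{I_j}[f-\einf_{I_j}f]\,dx$ is comparable to the \emph{bulk value} of $f$ on $I_j$, and by choosing the bulk values $\to\fz$ along $j$ one forces the $\sblo$ seminorm to diverge. The normalization $\rho\equiv 1$ means $\cd$ consists of unit-size-or-larger balls, so only small balls are relevant for the blow-up, which is exactly where the sparse dips live.

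The step I expect to be the main obstacle is verifying the \emph{finiteness} of the $\sbmo(\rr)$ seminorm uniformly over all small balls $B$, not merely over the designed intervals $I_j$. A ball $B$ may overlap several dips, or sit between them, or be much larger or smaller than a single $I_j$, and one must check that no such configuration produces unbounded mean oscillation. This is where the precise quantitative multiplier estimates of \cite{ny85} do the real work: by realizing $f$ (or $f$ minus a bounded nonnegative background) as a controlled modification of a product $g\cdot \log|\cdot|$ with $g$ a genuine $\sbmo$-multiplier, one inherits the global $\sbmo$ bound automatically and avoids an ad hoc case analysis over all ball positions and scales. The remaining verification, that the local means over the $I_j$ force the one-sided oscillation to blow up, is then a direct computation once the depths and bulk values are fixed, and checking $f\ge 0$ is immediate from the construction.
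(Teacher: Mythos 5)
Your strategy is the paper's strategy: use the Nakai--Yabuta pointwise multiplier theory to manufacture a product $g\cdot\log(2/|\cdot|)\in\sbmo(\rr)$ whose global $\sbmo$ bound is inherited from the multiplier theorem rather than checked ball by ball, and then defeat $\sblo(\rr)$ on a sequence of shrinking intervals near the origin on which the function is comparable to $\log(2/|x|)$ on most of the interval but has essential infimum zero. You also correctly identify the real obstacle (uniform control over all ball positions and scales) and the correct way around it (the multiplier characterization, Lemma \ref{l2.4} here).

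What your write-up leaves open, and what constitutes the actual content of the paper's proof, is the exhibition of a multiplier $g$ whose oscillation is synchronized with $\log(2/|x|)$ so that the product repeatedly returns to zero. This is not free tuning: Lemma \ref{l2.4} forces the mean oscillation of $g$ on a ball of radius $r$ to be $O(\psi(r))=O(1/\log(2/r))$, so $g$ can complete one oscillation only on the $\log\log(2/r)$ scale. The paper's choice is $g(x)=\sin\Psi_*(|x|)$ with $\Psi_*(r)\sim\log\log(2/r)$, verified to be a multiplier via Lemmas \ref{l2.2}--\ref{l2.4}; the zeros of $g$ at the points $r_k$ with $\Psi_*(r_k)=\frac{\pi}{4}k$ then supply your ``dips,'' and the gap $r_{8m+4}\ll r_{8m+3}$ makes the dip occupy a negligible fraction of $[r_{8m+4},r_{8m+3})$ in measure, exactly as your heuristic requires. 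Two further points you gloss over: $fg$ itself changes sign, so nonnegativity is \emph{not} immediate --- the paper passes to $|fg|$, using that $\sbmo$ is stable under absolute value (Lemma \ref{l2.2} with $F(t)=|t|$) and checking the $\sblo$ failure only on intervals where $fg\ge0$; and the lower bound $\frac{1}{|I|}\int_I fg\gtrsim\log(2/r_{8m+3})$ is obtained there by showing $fg$ is increasing and concave on $[r_{8m+4},r_{8m+3})$, which substitutes for your measure-theoretic ``small dip'' estimate. As it stands your argument is a correct plan with the decisive construction asserted rather than performed.
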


We first recall some notation and notions. Let $\phi$ be a positive
non-decreasing function on $(0,\,\fz)$. Define
\begin{equation*}
{\rm BMO}^\phi(\rr)\equiv\lf\{f\in L^1_{\loc}(\rr):\,\sup_{{\rm
balls}\,B\subset\rr} \frac{\mathrm{MO}(f,\,B)}{\phi(r_B)}<\fz\r\}
\end{equation*}
and
\begin{equation*} \wz{{\rm BMO}^\phi}(\rr)\equiv\lf\{f\in
L^1_{\loc}(\rr):\,|f_{B(0,\,1)}|+\sup_{{\rm balls}\,B\subset\rr}
\frac{\mathrm{MO}(f,\,B)}{\phi(r_B)}<\fz\r\},
\end{equation*} where $\mathrm{MO}(f,\,B)
=\frac{1}{|B|}\int_B|f(x)-f_B|\,dx$ and $r_B$ denotes the radius of
ball $B$. Recall that $f_B=\frac1{|B|}\int_B f(y)\,dy$. Then ${\rm
BMO}^\phi(\rr)$ modulo constants is a Banach space, but $\wz{{\rm
BMO}^\phi}(\rr)$ is itself a Banach space modulo null-functions; see
\cite{ny85}.

The following conclusion is just Lemma 2.2 in \cite{ny85}.

\begin{lem}\label{l2.2} If $|F(x)-F(y)|\le
C|x-y|$, then $\mathrm{MO}(F(f),\,B)\le 2C\,\mathrm{MO}(f,\,B)$.
\end{lem}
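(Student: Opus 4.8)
The plan is to produce the desired function in the factored form $f\equiv g\cdot b$, where $b$ is a fixed nonnegative, unbounded member of $\sbmo(\rr)$ and $g$ is a carefully chosen \emph{nonnegative bounded pointwise multiplier} of $\sbmo(\rr)$. Multiplication by $g$ keeps us inside $\sbmo(\rr)$ (this is the whole point of bringing in \cite{ny85}), non-negativity is automatic since both factors are nonnegative, and—crucially—$g$ can be arranged to vanish on a sparse set without destroying its multiplier property, and it is exactly these zeros of $f$ that will defeat the $\sblo$ condition. For the fixed factor I would take $b(x)\equiv(\log\frac1{|x|})^+$, that is, $b(x)=\log\frac1{|x|}$ for $0<|x|<1$ and $b(x)=0$ for $|x|\ge1$. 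Since $t\mapsto t^+$ is Lipschitz and $\log\frac1{|x|}\in\sbmo(\rr)$, Lemma \ref{l2.2} gives $b\in\sbmo(\rr)$; moreover $b\ge0$, $b$ is supported in $[-1,1]$, and $b$ is unbounded near the origin.

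For the multiplier I would invoke the characterization of pointwise multipliers of $\sbmo(\rr)$ from \cite{ny85}: such multipliers are, up to equivalence, the bounded functions whose mean oscillation decays logarithmically, i.e. those lying in $L^\fz(\rr)\cap\mathrm{BMO}^\phi(\rr)$ with $\phi(r)\sim1/\log(1/r)$ as $r\to0$. I would then construct a radial $g$ (a function of $|x|$ alone) with $0\le g\le1$ whose oscillation on a ball of radius $r$ near the origin is of order $1/\log(1/r)$—for instance a function built from the slowly moving variable $\log\log\frac1{|x|}$, such as $g(x)=\frac12\bigl(1+\cos(2\pi\log\log\frac1{|x|})\bigr)$—and which therefore vanishes along a lacunary sequence of radii $\rho_k\downarrow0$. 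By the multiplier theorem of \cite{ny85}, the product $f\equiv g\,b$ then lies in $\sbmo(\rr)$, and $f\ge0$ since $g,b\ge0$. The $\sbmo$ bookkeeping splits into the small balls $B\notin\cd$ (controlled by the multiplier estimate) and the large balls $B\in\cd$ (controlled because $|f|\le b$ and $b$ is supported in $[-1,1]$, so its averages over balls of radius $\ge1$ stay bounded).

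To see $f\notin\sblo(\rr)$ I would test the $\sblo$ functional on the centered balls $B_k\equiv B(0,R_k)$ with $R_k\equiv\rho_k\log\frac1{\rho_k}\to0$ and $R_k<1$, so that $B_k\notin\cd$. Because $g(\pm\rho_k)=0$ and $\pm\rho_k\in B_k$, we get $f(\pm\rho_k)=0$ and hence $\einf_{B_k}f=0$. On the other hand, on $B_k$ one has $b(x)=\log\frac1{|x|}\ge\log\frac1{R_k}$, while $g$ completes at least one full oscillation across $B_k$ and so has average $\approx\frac12$; therefore
$$\frac1{\mu(B_k)}\int_{B_k}\bigl[f-\einf_{B_k}f\bigr]\,d\mu=f_{B_k}=\frac1{2R_k}\int_{-R_k}^{R_k}g(x)\log\tfrac1{|x|}\,dx\ \ge\ \log\tfrac1{R_k}\cdot\frac1{2R_k}\int_{B_k}g\,d\mu\ \gtrsim\ \log\tfrac1{R_k}\to\fz.$$
This forces $\|f\|_{\sblo(\rr)}=\fz$, giving the sought contrast with $f\in\sbmo(\rr)$.

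The main obstacle is the construction and verification of $g$: one must simultaneously guarantee that $g$ is nonnegative, that it is a genuine pointwise multiplier of $\sbmo(\rr)$ (so that its mean oscillation on a small ball of radius $r$ is $O(1/\log(1/r))$, which forbids any sharp drop from a positive value down to $0$), and yet that $g$ really does attain the value $0$. Reconciling the logarithmic smallness of the oscillation of $g$ with the presence of honest zeros is exactly the delicate point, and it is precisely here that the sharp multiplier results of \cite{ny85}, together with the scale-by-scale estimation of $\mathrm{MO}(f,\,B)$ on both the small balls $B\notin\cd$ and the large balls $B\in\cd$, carry the argument. I would keep in reserve the alternative of building $f$ directly by carving logarithmic downward valleys into $b$ and controlling their mean oscillation through Lemma \ref{l2.2} applied to $t\mapsto\min(\cdot,\cdot)$, should the clean product form prove awkward to verify at the scales where $R_k$ is comparable to $\rho_k\log\frac1{\rho_k}$.
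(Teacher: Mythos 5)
Your proposal does not address the statement at hand. The statement is Lemma \ref{l2.2}: if $F$ is Lipschitz with constant $C$, then $\mathrm{MO}(F(f),\,B)\le 2C\,\mathrm{MO}(f,\,B)$ --- a one-line composition estimate for the mean oscillation functional. What you have written instead is a strategy for Proposition \ref{p2.1} (the construction of a nonnegative $f\in\sbmo(\rr)$ with $f\notin\sblo(\rr)$), i.e.\ an entirely different result from the same section. Nothing in your text engages with the quantity $\mathrm{MO}(F(f),\,B)$ or with the Lipschitz hypothesis as the thing to be proved; on the contrary, you \emph{invoke} Lemma \ref{l2.2} as a tool (to conclude $b=(\log\frac1{|x|})^+\in\sbmo(\rr)$), which would be circular if this were offered as its proof. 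So as a proof of the stated lemma the proposal has a total gap: the target inequality is never established, nor even attempted.

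For the record, the missing argument is elementary (the paper simply cites it as Lemma 2.2 of the Nakai--Yabuta paper \cite{ny85}): since $\mathrm{MO}(g,\,B)\le\frac{2}{|B|}\int_B|g(x)-c|\,dx$ for every constant $c$, one takes $c=F(f_B)$ and uses the Lipschitz bound pointwise,
\begin{equation*}
\mathrm{MO}(F(f),\,B)\le\frac{2}{|B|}\int_B\lf|F(f(x))-F(f_B)\r|\,dx
\le\frac{2C}{|B|}\int_B|f(x)-f_B|\,dx=2C\,\mathrm{MO}(f,\,B).
\end{equation*}
Incidentally, even judged as a blueprint for Proposition \ref{p2.1} your route differs from the paper's: the paper multiplies $f(x)=\log(2/|x|)\chi_{\{|x|\le2\}}$ by the explicit multiplier $g(x)=\sin\Psi_*(|x|)$ built from the function $\psi$ of \eqref{e2.4}, takes $|fg|$ to get nonnegativity, and defeats the $\sblo$ condition on the off-center intervals $[r_{8m+4},\,r_{8m+3})$ where $fg$ is increasing and concave --- whereas your sketch uses a cosine of $\log\log\frac1{|x|}$ and centered balls, and leaves unverified precisely the delicate point you flag (that a multiplier with logarithmically small oscillation can honestly vanish), which the paper settles by checking $g\in\wz{\rm BMO^\psi}(\rr)\cap L^\fz(\rr)$ via Lemmas \ref{l2.2}--\ref{l2.4}. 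But none of this bears on Lemma \ref{l2.2} itself, which is what you were asked to prove.
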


For a positive non-decreasing function $\phi$ on $(0,\,\fz)$, we
define strictly positive functions $\Phi^*(r)$ and $\Phi_*(r)$ by
setting
\begin{equation*}
\Phi^*(r)\equiv\lf\{\begin{array}{cl}\dint_1^r\frac{\phi(t)}t\,dt,&\quad{\rm
if}\quad2\le
r;\\[4mm]\dint_1^2\frac{\phi(t)}t\,dt,&\quad{\rm
if}\quad0<r<2,\end{array}\r. \quad{\rm and}\quad
\Phi_*(r)\equiv\lf\{\begin{array}{cl}\dint_r^2\frac{\phi(t)}t\,dt,&\quad{\rm
if}\quad0<r\le
1;\\[4mm]\dint_1^2\frac{\phi(t)}t\,dt,&\quad{\rm if}\quad1<r.\end{array}\r.
\end{equation*}

The following result is just Lemma 2.4 in \cite{ny85}.

\begin{lem}\label{l2.3}
Assume that $\frac{\phi(t)}t$ is almost decreasing. Then
$\Phi^*(|x|),\,\Phi_*(|x|)\in\wz{{\rm BMO}^\phi}(\rr).$
\end{lem}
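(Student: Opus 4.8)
The plan is to verify directly the two defining conditions of $\wz{{\rm BMO}^\phi}(\rr)$ for each of $f=\Phi^*(|\cdot|)$ and $f=\Phi_*(|\cdot|)$, namely finiteness of $|f_{B(0,1)}|$ and the uniform bound $\mathrm{MO}(f,B)\ls\phi(r_B)$ over all balls $B$. I treat both functions simultaneously by writing $f=(\text{const})+W$, where $W$ is nonnegative, even, and monotone in $|x|$, is supported where $f$ is non-constant (on $\{|x|>2\}$ for $\Phi^*$, on $\{|x|<1\}$ for $\Phi_*$), and satisfies $|W'(x)|=\phi(|x|)/|x|$ there. Since $\mathrm{MO}$ is unaffected by additive constants, it suffices to bound $\mathrm{MO}(W,B)$. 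The condition on $f_{B(0,1)}$ is routine: $\Phi^*(|\cdot|)$ is constant on $B(0,1)$, while for $\Phi_*(|\cdot|)$ one checks $\int_{B(0,1)}\Phi_*(|x|)\,dx<\fz$ by Fubini, using that the non-decreasing $\phi$ is bounded near the origin. Throughout I use the master inequality $\mathrm{MO}(W,B)\le\frac1{|B|^2}\int_B\int_B|W(x)-W(y)|\,dx\,dy$ and the fact that the almost-decreasing hypothesis forces $\phi$ to be doubling, $\phi(2t)\le C\phi(t)$.

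For the oscillation bound, fix $B=B(c,r)$; by evenness of $f$ assume $c\ge0$, and split into three cases. If $W\equiv0$ on $B$ then $\mathrm{MO}(W,B)=0$. If $c\ge2r$, so that $|x|\ge c-r\ge r$ throughout $B$, then bounding $|W(x)-W(y)|\le\int_B|W'|$ in the master inequality gives $\mathrm{MO}(W,B)\le\int_B\frac{\phi(|x|)}{|x|}\,dx$, and the almost-decreasing estimate $\frac{\phi(|x|)}{|x|}\le C\frac{\phi(r)}{r}$ for $|x|\ge r$ yields $\mathrm{MO}(W,B)\le2C\phi(r)$.

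The remaining, and main, case is $c<2r$ with $W\not\equiv0$ on $B$, where $B\subseteq(-3r,3r)=:I_R$ with $R:=3r$. Here I expect the genuine difficulty: any bound of the form $\mathrm{MO}(W,B)\le\frac2{|B|}\int_B|W-\alpha|$ with a fixed constant $\alpha$ is too lossy, since it produces the quantity $\int\frac{\phi(t)}{t}\,dt$, which is \emph{not} controlled by $\phi(r)$ (already for $\phi\equiv1$ this is a logarithm). Cancellation against the true mean is therefore essential. I use $\mathrm{MO}(W,B)\le\frac1{|B|^2}\int_{I_R}\int_{I_R}|W(x)-W(y)|\,dx\,dy$, reduce the latter to the centered double integral over $[0,R]^2$ via the evenness and monotonicity of $W$, and evaluate it by Fubini. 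The decisive point is that integrating $\int_a^b\frac{\phi(s)}{s}\,ds$ once more in $a$ contributes a factor $s$ that cancels the $1/s$, turning the almost-decreasing weight into $\int_0^b\phi(s)\,ds\le b\,\phi(b)\le b\,\phi(R)$, where the last step uses only that $\phi$ is non-decreasing.

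Carrying out this Fubini computation (once for each of $\Phi^*$ and $\Phi_*$, differing only in the support and the direction of monotonicity of $W$) gives $\int_{I_R}\int_{I_R}|W(x)-W(y)|\,dx\,dy\le4R^2\phi(R)$, whence $\mathrm{MO}(W,B)\le\frac{(2R)^2}{|B|^2}\,\phi(R)=9\,\phi(3r)\ls\phi(r)$ after invoking the doubling of $\phi$. Combining the three cases yields the required bound $\mathrm{MO}(f,B)\ls\phi(r_B)$, so $f\in\wz{{\rm BMO}^\phi}(\rr)$. The main obstacle throughout is recognizing that it is the double-integral cancellation estimate, and not a one-variable derivative bound, that converts the almost-decreasing weight $\phi(t)/t$ into the non-decreasing $\phi$.
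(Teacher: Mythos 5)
Your proof is correct, and there is in fact no internal proof to compare it against: the paper imports Lemma \ref{l2.3} verbatim as Lemma 2.4 of Nakai--Yabuta \cite{ny85} and gives no argument, so your proposal supplies a self-contained elementary verification where the paper simply cites the literature. The details check out. The reduction to $W$ is harmless since $\mathrm{MO}$ ignores additive constants; in the case $c\ge 2r$ one has $|x|\ge c-r\ge r$ on $B$, so almost decrease gives $\phi(|x|)/|x|\le C\phi(r)/r$ and $\mathrm{MO}(W,B)\le 2C\phi(r)$ as you say; and in the main case $c<2r$ your asserted Fubini computation is right. Indeed $|W(u)-W(v)|\le\int_{\min(u,v)}^{\max(u,v)}\phi(s)s^{-1}\,ds$ holds \emph{exactly} by the definitions of $\Phi^*$ and $\Phi_*$ (no differentiability is needed, so your detour through $W'$ can be bypassed), and evenness plus symmetry give
\begin{align*}
\int_{I_R}\int_{I_R}|W(x)-W(y)|\,dx\,dy
&=4\int_0^R\!\!\int_0^R|W(u)-W(v)|\,du\,dv
\le 8\int_0^R\!\!\int_0^v\!\!\int_u^v\frac{\phi(s)}{s}\,ds\,du\,dv\\
&=8\int_0^R\!\!\int_0^v\phi(s)\,ds\,dv
\le 8\int_0^R v\,\phi(v)\,dv\le 4R^2\phi(R),
\end{align*}
where the middle equality is your decisive cancellation (for fixed $s$ the $u$-integration contributes the factor $s$). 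With $|B|=2r$, $R=3r$, and the doubling $\phi(2t)\le 2C\phi(t)$ (a correct consequence of almost decrease), this yields $\mathrm{MO}(W,B)\le 9\phi(3r)\ls\phi(r)$; the finiteness of $|f_{B(0,1)}|$ is routine as you indicate. One motivational remark in your write-up is overstated, though it does not affect validity: it is not true that every bound $\mathrm{MO}(W,B)\le\frac{2}{|B|}\int_B|W-\alpha|$ with a constant $\alpha$ is too lossy. Choosing $\alpha\equiv W(3r)$ and performing the same one-variable Fubini cancellation gives $\frac1{2r}\int_{-3r}^{3r}|W(x)-W(3r)|\,dx\le\frac1r\int_0^{3r}\phi(s)\,ds\le 3\phi(3r)\ls\phi(r)$, so cancellation is essential but need not be against the true mean; what genuinely fails is only the pointwise sup-oscillation bound $|W(x)-\alpha|\le\int_B\phi(t)t^{-1}\,dt$, which produces the logarithmic loss you describe. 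This would even slightly shorten your main case.
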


Recall that a function $g$ on $\rr$ is called a pointwise multiplier
on $\sbmo(\rr)$, if the pointwise multiplication $fg$ belongs to
$\sbmo(\rr)$ for all $f\in\sbmo(\rr)$.

Set
\begin{equation}\label{e2.4}
\psi(r)=\lf[\int^2_{\min\{1,\,r\}}\frac{1}{t}dt\r]^{-1}
\quad\text{for}\quad r\in (0,\,\infty).
\end{equation}
Then $\psi$ is increasing and $\frac{\psi(t)}{t}$ is almost
decreasing. The following Lemma \ref{l2.4} is a special case of
Theorem 3 in \cite{ny85}.

\begin{lem}\label{l2.4}
A function $g$ on $\rr$ is a pointwise multiplier on $\sbmo(\rr)$ if
and only if $g\in\wz{\rm BMO^\psi}(\rr)\cap L^\fz(\rr)$, where
$\psi$ is as in \eqref{e2.4}.
\end{lem}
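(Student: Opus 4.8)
The statement is an \emph{if and only if}, so I would prove the two inclusions separately; in both directions the computation is driven by the pointwise identity
$$fg-f_Bg_B=(f-f_B)\,g+f_B\,(g-g_B),$$
together with the fact that $\psi$ in \eqref{e2.4} is calibrated so that $1/\psi(r)=\int_{\min\{1,\,r\}}^2\frac{dt}{t}$ is comparable to the number of dyadic scales between radius $r$ and radius $1$. For \emph{sufficiency}, assume $g\in\wz{\rm BMO^\psi}(\rr)\cap L^\fz(\rr)$ and fix $f\in\sbmo(\rr)$. For a ball $B=B(x_0,r)$ the identity yields
$$\mathrm{MO}(fg,\,B)\ls\|g\|_{L^\fz(\rr)}\,\mathrm{MO}(f,\,B)+|f_B|\,\mathrm{MO}(g,\,B).$$
The first summand is harmless since $\mathrm{MO}(f,\,B)\ls\|f\|_{\sbmo(\rr)}$. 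For the second I would use two matching estimates: the defining bound $\mathrm{MO}(g,\,B)\ls\|g\|_{\wz{\rm BMO^\psi}(\rr)}\,\psi(r)$, and the logarithmic growth of averages of an $\sbmo$ function, $|f_B|\ls\|f\|_{\sbmo(\rr)}/\psi(r)$. The latter follows by telescoping the averages of $f$ along the dilated balls $B(x_0,2^jr)$ up to $B(x_0,1)\in\cd$: each consecutive difference is $\ls\|f\|_{\sbmo(\rr)}$, there are $\sim1/\psi(r)$ of them, and $|f_{B(x_0,1)}|\le\|f\|_{\sbmo(\rr)}$ since $B(x_0,1)\in\cd$. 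Multiplying the two estimates, the factors $\psi(r)$ cancel exactly, so $|f_B|\,\mathrm{MO}(g,\,B)\ls\|f\|_{\sbmo(\rr)}\|g\|_{\wz{\rm BMO^\psi}(\rr)}$. Combined with the trivial bound $\frac1{|B|}\int_B|fg|\le\|g\|_{L^\fz(\rr)}\frac1{|B|}\int_B|f|$ on balls $B\in\cd$, this gives $fg\in\sbmo(\rr)$ with $\|fg\|_{\sbmo(\rr)}\ls\|f\|_{\sbmo(\rr)}$.

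For \emph{necessity}, suppose $g$ is a multiplier; by the closed graph theorem $M_g\colon f\mapsto fg$ is bounded on $\sbmo(\rr)$, say with norm $A$. I would first obtain $g\in L^\fz(\rr)$ by a Banach-algebra trick that is independent of any oscillation estimate: since $M_{g^k}=M_g^{\,k}$, each power $g^k$ is again a multiplier with $\|M_{g^k}\|\le A^k$, and applying $M_{g^k}$ to the constant $1\in\sbmo(\rr)$ (with $\|1\|_{\sbmo(\rr)}=1$) gives $g^k\in\sbmo(\rr)$ and $\|g^k\|_{\sbmo(\rr)}\le A^k$. Hence for every $B\in\cd$ the large-scale part of the $\sbmo$ norm yields $\frac1{|B|}\int_B|g|^k\,dx\le A^k$, so $(\frac1{|B|}\int_B|g|^k)^{1/k}\le A$; letting $k\to\fz$ gives $\|g\|_{L^\fz(B)}\le A$, and the union over all large balls gives $g\in L^\fz(\rr)$ with $\|g\|_{L^\fz(\rr)}\le A$.

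With $g\in L^\fz(\rr)$ in hand I would probe the oscillation of $g$ by the sharp test function $f(x)=\Phi_*(|x-x_0|)$, which by Lemma \ref{l2.3} (taking $\phi\equiv1$) lies in $\sbmo(\rr)$ with norm $\ls1$, has $\mathrm{MO}(f,\,B)\ls1$, and satisfies $f_B\sim\log(1/r)\sim1/\psi(r)$ on $B=B(x_0,r)$. Rearranging the same identity gives
$$|f_B|\,\mathrm{MO}(g,\,B)\le\mathrm{MO}(fg,\,B)+2\|g\|_{L^\fz(\rr)}\,\mathrm{MO}(f,\,B)\ls A+\|g\|_{L^\fz(\rr)},$$
and dividing by $|f_B|\sim1/\psi(r)$ yields $\mathrm{MO}(g,\,B)\ls\psi(r)(A+\|g\|_{L^\fz(\rr)})$ for small balls centered at an arbitrary $x_0$; for $B\in\cd$ one has $\mathrm{MO}(g,\,B)\le2\|g\|_{L^\fz(\rr)}\sim\psi(r)\|g\|_{L^\fz(\rr)}$, and $|g_{B(0,1)}|\le\|g\|_{L^\fz(\rr)}<\fz$, so $g\in\wz{\rm BMO^\psi}(\rr)$. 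I expect the main obstacle to be precisely this $\wz{\rm BMO^\psi}$ part of the necessity: one must manufacture a test function whose average grows exactly like $1/\psi(r)$ while keeping its $\sbmo$ norm bounded, which is exactly what Lemma \ref{l2.3} supplies, and it is the fine calibration of $\psi$ against the dyadic scale-count that makes both the cancellation in the sufficiency argument and the division by $|f_B|$ here succeed. The apparent circularity—that controlling the error $\|g\|_{L^\fz(\rr)}\mathrm{MO}(f,\,B)$ seems to require the conclusion—is broken by establishing $g\in L^\fz(\rr)$ first through the algebra/$L^k$ argument; Lemma \ref{l2.2} remains available to produce bounded Lipschitz modifications of the test functions should the error terms need further taming.
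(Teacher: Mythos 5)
Your proposal cannot be matched line-by-line against the paper, because the paper contains no proof of Lemma \ref{l2.4} at all: it is imported verbatim as ``a special case of Theorem 3 in \cite{ny85}'' (Nakai--Yabuta), just as Lemmas \ref{l2.2} and \ref{l2.3} are. What you have done is reconstruct, correctly, the classical Stegenga/Nakai--Yabuta multiplier argument: the decomposition $fg-f_Bg_B=(f-f_B)g+f_B(g-g_B)$, the logarithmic growth $|f_B|\ls\|f\|_{\sbmo(\rr)}/\psi(r)$ via telescoping out to a ball in $\cd$ (where the average itself, not just the oscillation, is controlled), and the probe functions $\Phi_*(|\cdot-x_0|)$ --- exactly what the paper's Lemma \ref{l2.3} supplies --- for the necessity of the $\wz{\rm BMO^\psi}$ condition; since $f\ge \log(2/|x-x_0|)$ on $B(x_0,r)$, one even has $f_B\ge 1/\psi(r)$ exactly, so the division step is clean. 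Your derivation of $g\in L^\fz(\rr)$ via $M_{g^k}=M_g^k$ applied to the constant function $1$ is a genuinely nice alternative to the usual testing arguments: it exploits that $\sbmo(\rr)$, unlike ${\rm BMO}(\rr)$ modulo constants, is a Banach space containing $1$ with $\|1\|_{\sbmo(\rr)}=1$ and whose norm dominates the averages $\frac1{|B|}\int_B|\cdot|$ on every $B\in\cd$, whence $(\frac1{|B|}\int_B|g|^k)^{1/k}\le A$ and $\|g\|_{L^\fz}\le A$ in the limit. Two small verifications should be made explicit to render this airtight. First, Lemma \ref{l2.3} with $\phi\equiv1$ only yields $\Phi_*(|\cdot-x_0|)\in\wz{{\rm BMO}^1}(\rr)$, i.e.\ uniform control of oscillations plus one fixed average; membership in $\sbmo(\rr)$ with norm uniform in $x_0$ additionally requires the large-ball average bound, which for this particular function is a one-line check ($\Phi_*$ equals $\log 2$ off $B(x_0,1)$ and has an integrable logarithmic singularity), but it is a needed line, since $\wz{{\rm BMO}^1}(\rr)\not\subset\sbmo(\rr)$ in general (consider $\log(2+|x|)$, which has bounded oscillations but unbounded averages over unit balls). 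Second, the closed graph theorem needs the observation that convergence in $\sbmo(\rr)$ implies convergence in $L^1_{\loc}(\rr)$ (again via the $\cd$-part of the norm on unit balls), so that $f_n\to f$ and $gf_n\to h$ force $h=gf$ almost everywhere. With these two remarks inserted, your proof is complete and self-contained --- which is more than the paper itself provides for this statement.
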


Then we have the following conclusion.

\begin{prop}\label{p2.2}
Let $\psi$ be as in \eqref{e2.4}. Set
\begin{equation*}
\Psi_*(r)\equiv
\lf\{\begin{array}{cl}\dint_r^2\frac{\psi(t)}t\,dt,&\quad{if}\quad0<r\le
1;\\[4mm]\dint_1^2\frac{\psi(t)}t\,dt,&\quad{if}\quad1<r,\end{array}\r.
\end{equation*}
and
\begin{equation}\label{e2.5}
g(x)\equiv{\sin\Psi_*(|x|)}\quad {for}\,\,x\in\rr.
\end{equation}
Then $g$ is a pointwise multiplier on $\sbmo(\rr)$.
\end{prop}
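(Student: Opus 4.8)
The plan is to verify that the function $g(x) = \sin \Psi_*(|x|)$ satisfies the two membership conditions in Lemma \ref{l2.4}, namely that $g \in \wz{\rm BMO^\psi}(\rr)$ and $g \in L^\fz(\rr)$. The second condition is immediate, since $g$ is a sine function and hence $\|g\|_{L^\fz(\rr)} \le 1 < \fz$. So the entire work lies in establishing that $g \in \wz{\rm BMO^\psi}(\rr)$, after which the conclusion follows directly by applying Lemma \ref{l2.4} with the same $\psi$ as in \eqref{e2.4}.

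To handle the main condition, the key observation is that $g(x) = \sin \Psi_*(|x|) = F(\Psi_*(|x|))$ where $F(t) = \sin t$ satisfies the Lipschitz estimate $|F(u) - F(v)| \le |u-v|$ for all real $u, v$ (with Lipschitz constant $C = 1$). Thus $g$ is the composition of a Lipschitz function with $\Psi_*(|\cdot|)$. First I would invoke Lemma \ref{l2.3}: since $\psi$ is increasing and $\frac{\psi(t)}{t}$ is almost decreasing (as asserted in the text following \eqref{e2.4}), applying that lemma with $\phi$ replaced by $\psi$ shows that $\Psi_*(|x|) \in \wz{\rm BMO^\psi}(\rr)$. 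In particular, $\Psi_*(|\cdot|)$ has finite $\wz{\rm BMO^\psi}$ seminorm, which controls both $|\Psi_*(|\cdot|)_{B(0,1)}|$ and $\sup_B \mathrm{MO}(\Psi_*(|\cdot|), B)/\psi(r_B)$.

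Next I would apply Lemma \ref{l2.2} with this Lipschitz $F$ and $f(x) = \Psi_*(|x|)$ to obtain, for every ball $B$, the pointwise-in-$B$ oscillation bound
\begin{equation*}
\mathrm{MO}(g,\,B) = \mathrm{MO}(F(\Psi_*(|\cdot|)),\,B) \le 2\,\mathrm{MO}(\Psi_*(|\cdot|),\,B).
\end{equation*}
Dividing by $\psi(r_B)$ and taking the supremum over all balls $B$, the right-hand side is bounded by $2$ times the $\wz{\rm BMO^\psi}$-supremum term for $\Psi_*(|\cdot|)$, which is finite by the previous step. The remaining piece of the $\wz{\rm BMO^\psi}(\rr)$ norm, the value $|g_{B(0,1)}|$, is automatically finite since $|g| \le 1$ forces $|g_{B(0,1)}| \le 1$. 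Combining these two estimates gives $g \in \wz{\rm BMO^\psi}(\rr)$, completing the verification of both conditions in Lemma \ref{l2.4}.

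I do not anticipate a serious obstacle here, as the argument is a clean two-step reduction: Lemma \ref{l2.3} places $\Psi_*(|\cdot|)$ in $\wz{\rm BMO^\psi}(\rr)$, and the Lipschitz stability Lemma \ref{l2.2} transfers this membership through the bounded nonlinearity $\sin$. The one point requiring care is that Lemma \ref{l2.2} only controls the oscillation term $\mathrm{MO}(g,\,B)$ and not the anchoring average $g_{B(0,1)}$, so one must separately (and trivially) dispatch that term using the $L^\fz$ bound; this is exactly why the $\wz{\rm BMO^\phi}$ space, rather than the modulo-constants space ${\rm BMO^\phi}$, is the natural setting and why boundedness of $g$ is genuinely used rather than being a mere aside.
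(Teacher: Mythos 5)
Your proof is correct and follows essentially the same route as the paper: apply Lemma \ref{l2.3} to place $\Psi_*(|\cdot|)$ in $\wz{\rm BMO^\psi}(\rr)$, transfer this through the Lipschitz function $\sin$ via Lemma \ref{l2.2}, note the trivial $L^\fz$ bound, and conclude by Lemma \ref{l2.4}. Your explicit handling of the anchoring term $|g_{B(0,1)}|$ is a detail the paper leaves implicit, but the argument is the same.
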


\begin{proof}\rm
By Lemma \ref{l2.4}, we only need to prove that
$\sin\Psi_*(|x|)\in\wz{\rm BMO^\psi}(\rr)\cap L^\fz(\rr)$. From
Lemma \ref{l2.3}, it follows that $\Psi_*(|x|)\in\wz{\rm
BMO^\psi}(\rr)$, which via Lemma \ref{l2.2} shows that
$\sin\Psi_*(|x|)\in\wz{\rm BMO^\psi}(\rr)$. Obviously,
$\sin\Psi_*(|x|)\in L^\fz(\rr)$, which completes the proof of
Proposition \ref{p2.2}.
\end{proof}

Now we prove Proposition \ref{p2.1}.

\begin{proof}[Proof of Proposition \ref{p2.1}]
Let $g$ be as in \eqref{e2.5}. For $x\in\rr$, set
\begin{equation*}
f(x)\equiv\lf\{\begin{array}{ll}\log(2/|x|),&\quad{\rm if}\quad|x|\le2;\\
[2mm]0,&\quad{\rm if}\quad|x|>2.\end{array}\r.
\end{equation*}

Then we shall show $|fg|\in\sbmo(\rr)$, but $|fg|\not\in\sblo(\rr)$.

It is obvious that $f\in\sbmo(\rr)$. Since $g$ is a pointwise
multiplier on $\sbmo(\rr)$, $fg\in\sbmo(\rr)$, and so
$|fg|\in\sbmo(\rr)$.

Now we turn our attention to prove that $|fg|\not\in{\rm blo}(\rr)$.
Notice that
\begin{equation*}
\Psi_*(r)\equiv\lf\{\begin{array}{ll}
1+\dint_r^1\frac{dt}{t\log(2/t)},&\quad{\rm if}\quad0<r\le1;\\[4mm]
1,&\quad{\rm if}\quad1<r.\end{array}\r.
\end{equation*}
So
\begin{equation*}
g(x)=\sin\left(1+\int^1_{|x|}\frac{dt}{t\log(2/t)}\right),\quad\quad
{\rm if}\quad |x|\le 1.
\end{equation*}
For $k=2,\,3,\,4,\,\cdots$, choose $r_k>0$ such that
\begin{equation*}
\Psi_*(r_k)=1+\int^1_{r_k}\frac{dt}{t\log(2/t)}=\frac\pi4k.
\end{equation*}
Then $1>r_2>r_3>r_4>\cdots$, and $r_k\rightarrow0$ as
$k\rightarrow\fz$. Let $m\in\nn$. For $x\in[r_{8m+4},\,r_{8m+3})$,
we have $\Psi_*(x)\in((2m+\frac34)\pi,\,(2m+1)\pi]$, which implies
that $\sin\Psi_*(x)\ge0$, $\cos\Psi_*(x)<0$ and
$\sin\Psi_*(x)+\cos\Psi_*(x)<0.$
Then we have the following:
$$\begin{array}{|c||c|c|c|c|c|}
\hline
  x   & \ \cdots\ & \ r_{8m+4}\ & \ \cdots\ & r_{8m+3}  & \ \cdots\ \\
\hline
  (fg)'(x) & \ \cdots\      & +           & +         & 0 & \ \cdots\   \\
\hline
  (fg)''(x) & \ \cdots\      & -           & -         & - & \ \cdots\  \\
\hline
  fg(x)     & \ \cdots\     & 0           & \nearrow  & \rule{0mm}{5mm}
  \frac{\sqrt{2}\log(2/r_{8m+3})}{2} & \ \cdots\   \\
\hline
\end{array}$$
In fact, for $x\in(r_{8m+4},\,r_{8m+3})$,
\begin{align*}
(fg)'(x)&=\lf(-\frac1x\r)\sin\Psi_*(x)+[\log(2/x)]\cos\Psi_*(x)
\Big(-\frac{1}{x\log(2/x)}\Big)\\
&=-\frac1x \left(\sin\Psi_*(x)+\cos\Psi_*(x)\right)>0,
\end{align*}
and
\begin{equation*}
(fg)''(x)=\frac1{x^2}\left(\sin\Psi_*(x)+\cos\Psi_*(x)\right)
-\frac1x\left(\cos\Psi_*(x)-\sin\Psi_*(x)\right)[\Psi_*(x)]'<0.
\end{equation*}
Hence $fg$ is nonnegative, increasing and strictly concave on
$[r_{8m+4},\,r_{8m+3})$, and so
\begin{align*}
&\frac{1}{r_{8m+3}-r_{8m+4}}\int_{r_{8m+4}}^{r_{8m+3}}[|fg|(x)-\mathop\einf(|fg|)]dx\\
&\hs=\frac{1}{r_{8m+3}-r_{8m+4}}\int_{r_{8m+4}}^{r_{8m+3}}fg(x)\,dx\\
&\hs\ge\frac{1}{2}\frac{\sqrt{2}\log(2/r_{8m+3})}{2}\rightarrow\fz\quad{\rm
as}\quad m\rightarrow\fz,
\end{align*}
which implies that $|fg|\not\in\sblo(\rr)$. This finishes the proof
of Proposition \ref{p2.1}.
\end{proof}

\section{Boundedness of Lusin-area and
$\gl$ functions\label{s3} }

\hskip\parindent Let $\rho$ be an admissible function and $\cx$ a
doubling metric measure space. In this section, we consider the
boundedness of certain variant of Lusin-area and $\gl$ functions
from $\bmo_\rho(\cx)$ to $\blo_\rho(\cx)$. We remark that unlike the
boundedness of the $\gl$ function, to obtain the boundedness of the
Lusin-area function, we need to assume that $\cx$ has the
$\dz$-annular decay property. Several remarks on this property are
given in Section \ref{s4}.

\begin{defn}\label{d3.1}\rm For $\dz\in(0, 1]$ and a doubling
metric measure space $(\cx, d, \mu)$, $(\cx, d, \mu)$ is said to
have the {\it $\dz$-annular decay property} if there exists a
constant $K\in [1, \fz)$ such that for all $x\in\cx$, $s\in (0,\fz)$
and $r\in (s, \fz)$,
\begin{equation}\label{e3.1}
\mu(B(x,\,r+s))-\mu(B(x,\,r))\le K\lf(\frac
sr\r)^{\dz}\mu(B(x,\,r)).
\end{equation}
\end{defn}

Observe that if $r\in (0,s]$, then \eqref{e3.1} is a simple
conclusion of the doubling property \eqref{e2.1} of $\mu$.

Let $\rho$ be an admissible function on $\cx$ and $\{Q_t\}_{t>0}$ a
family of  operators bounded on $\lt$ with  integral kernels
$\{Q_t(x,\,y)\}_{t>0}$ satisfying that there exist constants
$C,\,\dz_1\in(0,\,\fz)$, $\dz_2\in(0,\,1)$ and $\gz\in(0,\,\fz)$
such that for all $t\in(0,\,\fz)$ and $x,\,y\in\cx$,

$(Q)_{\rm i}$ $|Q_t(x,\,y)|\le C\frac1{V_t(x)+V(x,\,y)}(\frac
t{t+d(x,\,y)})^{\gz}(\frac {\rho(x)}{t+\rho(x)})^{\dz_1}$;

$(Q)_{\rm ii}$ $|\int_\cx Q_t(x,\,z)\,d\mu(z)|\le C(\frac
t{t+\rho(x)})^{\dz_2}$.

For all $f\in L^1_\loc(\cx)$ and $x\in\cx$, define the
Littlewood-Paley $g$-function by setting
\begin{equation}\label{e3.2}
g(f)(x)\equiv\lf\{\int_0^\fz|Q_t(f)(x)|^2\,\frac{dt}{t}\r\}^{1/2},
\end{equation}
and Lusin-area and $\gl$ functions, respectively, by setting
\begin{equation}\label{e3.3}
S(f)(x)\equiv\lf\{\int_0^\fz\int_{d(x,\,y)<t} |Q_t(f)(y)|^2 \frac
{d\mu(y)}{V_t(y)}\frac {dt}{t}\r\}^{1/2},
\end{equation}
and
\begin{equation}\label{e3.4}
g^*_\lz(f)(x)\equiv\lf\{\iint_{\cx\times(0,\,\fz)}\lf(\frac{t}{t+d(x,\,y)}\r)^\lz
|Q_t(f)(y)|^2 \frac {d\mu(y)}{V_t(y)}\frac {dt}{t}\r\}^{1/2},
\end{equation}
where $\lz\in(0,\,\fz)$.

We first have the following technical lemma.

\begin{lem}\label{l3.1}
Assume that the Littlewood-Paley $g$-function $g(f)$ in \eqref{e3.2}
is bounded on $L^2(\cx)$. Then the Lusin-area function $S(f)$ in
\eqref{e3.3} and the $\gl$ function $\gl(f)$ in \eqref{e3.4} with
$\lz\in(n,\,\fz)$ are bounded on $L^2(\cx)$, where $n$ is as in
\eqref{e2.1}.
\end{lem}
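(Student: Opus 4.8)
The plan is to reduce everything to the assumed $L^2$-boundedness of the $g$-function via a Fubini argument. The key observation is that both $S(f)$ and $\gl(f)$ are defined by integrating $|Q_t(f)(y)|^2\,\frac{d\mu(y)}{V_t(y)}\,\frac{dt}{t}$ against a spatial cutoff or weight, and the $g$-function controls $\int_0^\fz|Q_t(f)(x)|^2\,\frac{dt}t$. Since the problem is to bound $\|S(f)\|_{\lt}^2=\int_\cx [S(f)(x)]^2\,d\mu(x)$ and likewise for $\gl$, I would first write out these squared $L^2$ norms explicitly as triple integrals in $x$, $y$ and $t$ and then interchange the order of integration to integrate in $x$ first.

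For the Lusin-area function, after Tonelli the inner integral is $\int_{\{x:\,d(x,\,y)<t\}}d\mu(x)=\mu(B(y,\,t))=V_t(y)$, which exactly cancels the weight $V_t(y)^{-1}$ appearing in \eqref{e3.3}. This gives
\begin{align*}
\|S(f)\|_{\lt}^2
&=\int_0^\fz\int_\cx|Q_t(f)(y)|^2\frac{1}{V_t(y)}
\lf(\int_{d(x,\,y)<t}d\mu(x)\r)\frac{d\mu(y)}{t}\,dt\\
&=\int_0^\fz\int_\cx|Q_t(f)(y)|^2\,d\mu(y)\,\frac{dt}t
=\|g(f)\|_{\lt}^2,
\end{align*}
so $\|S(f)\|_{\lt}=\|g(f)\|_{\lt}\ls\|f\|_{\lt}$ by hypothesis. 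This step is routine and requires no regularity of $\cx$.

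The $\gl$ case is the one demanding the hypothesis $\lz\in(n,\,\fz)$. Proceeding as above, the interchange of order leaves the spatial factor
$$
\int_\cx\lf(\frac{t}{t+d(x,\,y)}\r)^\lz d\mu(x),
$$
which I must show is bounded by a constant multiple of $V_t(y)$ uniformly in $y$ and $t$; that uniform bound is exactly what cancels the weight $V_t(y)^{-1}$ and reduces $\|\gl(f)\|_{\lt}^2$ to $\|g(f)\|_{\lt}^2$. To estimate this integral I would decompose $\cx$ into the ball $B(y,\,t)$ and the dyadic annuli $B(y,\,2^{j+1}t)\setminus B(y,\,2^jt)$ for $j\ge0$. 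On the central ball the integrand is at most $1$, contributing $V_t(y)$; on the $j$-th annulus the weight is at most $2^{-j\lz}$, while the measure is controlled by \eqref{e2.1} as $\mu(B(y,\,2^{j+1}t))\le C_2\,2^{(j+1)n}V_t(y)$. Summing yields $\sum_{j\ge0}2^{-j\lz}2^{(j+1)n}V_t(y)=C_2\,2^n\sum_{j\ge0}2^{-j(\lz-n)}V_t(y)$, a convergent geometric series precisely because $\lz>n$; this is the step where the restriction on $\lz$ and the doubling exponent $n$ from \eqref{e2.1} enter, and it is the only genuine obstacle in the proof. Combining the central and annular contributions gives $\int_\cx(\frac{t}{t+d(x,\,y)})^\lz d\mu(x)\ls V_t(y)$, whence $\|\gl(f)\|_{\lt}\ls\|g(f)\|_{\lt}\ls\|f\|_{\lt}$, completing the argument.
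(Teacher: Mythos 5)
Your proof is correct and follows essentially the same route as the paper: Tonelli to integrate in $x$ first, then a dyadic-annulus decomposition with the doubling bound \eqref{e2.1} giving the geometric series $\sum_k 2^{-k(\lz-n)}$, convergent precisely because $\lz>n$. The only (harmless) difference is that you treat $S(f)$ directly via the exact cancellation $\int_{d(x,y)<t}d\mu(x)=V_t(y)$, whereas the paper disposes of $S(f)$ by the pointwise domination $S(f)\ls\gl(f)$ and proves only the $\gl$ estimate.
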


\begin{proof}\rm Since for all $x\in\cx$, $S(f)(x)\le\gl(f)(x)$. We
only need to prove the $L^2(\cx)$-boundedness of $\gl(f)$.

To this end, we have
\begin{align*}
\dint_{\cx}\Big[\gl(f)(x)\Big]^2\,d\mu(x)
&=\dint_{\cx}\iint_{\cx\times(0,\,\fz)}
\bigg(\frac{t}{t+d(x,\,y)}\bigg)^{\lz}|Q_t(f)(y)|^2
\frac{d\mu(y)}{V_t(y)}\frac{dt}{t}\,d\mu(x)\\
&\le\dint_{\cx}\dint_0^{\fz}|Q_t(f)(y)|^2\dfrac{dt}{t}\dsup_{t>0}
\bigg[\dint_{\cx}\bigg(\frac{t}{t+d(x,\,y)}\bigg)^{\lz}
\dfrac{1}{V_t(y)}\,d\mu(x)\bigg]\,d\mu(y)\\
&=\dint_{\cx}[g(f)(y)]^2
\dsup_{t>0}\bigg[\dint_{\cx}\bigg(\frac{t}{t+d(x,\,y)}\bigg)^{\lz}
\dfrac{1}{V_t(y)}\,d\mu(x)\bigg]\,d\mu(y).
\end{align*}
Moreover, for all $y\in\cx$ and $t>0$, we obtain
\begin{align*}
&\dint_{\cx}\lf(\frac{t}{t+d(x,\,y)}\r)^{\lz}
\dfrac{1}{V_t(y)}\,d\mu(x)\\
&\hs=\dint_{d(x,\,y)< t}\lf(\frac{t}{t+d(x,\,y)}\r)^{\lz}
\dfrac{1}{V_t(y)}\,d\mu(x)+\dint_{d(x,\,y)\ge
t}\cdots\\
&\hs\ls1+\dsum_{k=0}^{\fz}\dint_{2^kt\le d(x,\,y)<2^{k+1}t}
\lf(\frac{t}{t+d(x,\,y)}\r)^{\lz} \dfrac{1}{V_t(y)}\,d\mu(x)
\ls1+\dsum_{k=0}^{\fz}2^{-k(\lz-n)}\ls1,
\end{align*}
where we used the assumption that $\lz\in(n,\,\fz)$. Thus,
$\|\gl(f)\|_{L^2(\cx)}\ls\|g(f)\|_{L^2(\cx)}$, which completes the
proof of Lemma \ref{l3.1}.
\end{proof}

\begin{thm}\label{t3.1}\rm
Let $\cx$ be a doubling metric measure space having the
$\dz$-annular decay property for some $\dz\in(0,\,1]$. Let $\rho$ be
an admissible function on $\cx$ and the Lusin-area function $S(f)$
as in \eqref{e3.3}. Assume that the Littlewood-Paley $g$-function in
\eqref{e3.2} is bounded on $L^2(\cx)$. Then there exists a positive
constant $C$ such that for all $f\in\bmo_{\rho}(\cx)$,
$[S(f)]^2\in\blo_{\rho}(\cx)$ and
$\|[S(f)]^2\|_{\blo_{\rho}(\cx)}\le C\|f\|^2_{\bmo_{\rho}(\cx)}$.
\end{thm}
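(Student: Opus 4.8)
The plan is to verify directly the two conditions defining $\blo_{\rho}(\cx)$ for $G\equiv[S(f)]^2$: over large balls $B\in\cd$ one needs $\frac1{\mu(B)}\int_B G\,d\mu\ls\|f\|^2_{\bmo_{\rho}(\cx)}$, and over small balls $B\notin\cd$ one needs $\frac1{\mu(B)}\int_B(G-\einf_B G)\,d\mu\ls\|f\|^2_{\bmo_{\rho}(\cx)}$. For a fixed $B=B(x_0,r_B)$ I would split $f=f_1+f_2+f_3$ with $f_1\equiv(f-f_{2B})\chi_{2B}$, $f_2\equiv(f-f_{2B})\chi_{\cx\setminus 2B}$ and the constant $f_3\equiv f_{2B}$, so that $Q_t(f)=Q_t(f_1)+Q_t(f_2)+Q_t(f_3)$ by linearity. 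The three inputs are: the $L^2(\cx)$-boundedness of $S$ from Lemma \ref{l3.1} (via the assumed boundedness of $g$) together with Lemma \ref{l2.1}, used on the local part $f_1$; the size bound $(Q)_{\rm i}$ together with the standard growth $|f_{2^{k+1}B}-f_{2B}|\ls k\|f\|_{\bmo_{\rho}(\cx)}$, used on the far part $f_2$; and $(Q)_{\rm ii}$, which controls $Q_t$ on constants, used on $f_3$.

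For $B\in\cd$ no essential infimum appears, so I would use $G\le3\big([S(f_1)]^2+[S(f_2)]^2+[S(f_3)]^2\big)$ and bound the average of each term. The $f_1$-term is $\frac1{\mu(B)}\int_B[S(f_1)]^2\,d\mu\ls\frac1{\mu(B)}\|f_1\|^2_{\lt}=\frac1{\mu(B)}\int_{2B}|f-f_{2B}|^2\,d\mu\ls\|f\|^2_{\bmo_{\rho}(\cx)}$; the $f_2$-term is handled by inserting $(Q)_{\rm i}$, decomposing $\cx\setminus 2B$ into annuli and summing the resulting geometric series; and $(Q)_{\rm ii}$ gives $[S(f_3)]^2\ls|f_{2B}|^2$, where crucially $2B\in\cd$ forces $|f_{2B}|\le\frac1{\mu(2B)}\int_{2B}|f|\,d\mu\le\|f\|_{\bmo_{\rho}(\cx)}$ by Definition \ref{d2.3}. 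This case is routine.

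The substantive case is $B\notin\cd$. Writing $A\equiv[S(f_1)]^2$, $H\equiv[S(f_2+f_3)]^2$, $\Gamma(y)\equiv\{(z,t)\in\cx\times(0,\fz):d(y,z)<t\}$ and the cross term $C(y)\equiv2\,{\rm Re}\iint_{\Gamma(y)}Q_t(f_1)(z)\,\overline{Q_t(f_2+f_3)(z)}\,\frac{d\mu(z)}{V_t(z)}\frac{dt}{t}$, one has $G=A+C+H$. Here $H$ may be large, of size up to $\sim|f_{2B}|^2$, and $|f_{2B}|$ may grow like $\|f\|_{\bmo_{\rho}(\cx)}\log(\rho(x_0)/r_B)$ for $B\notin\cd$; but this large part is almost constant on $B$, and it is precisely this constant that the subtraction of $\einf_B G$ is designed to remove. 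Concretely, comparing $G(y)$ with $G(z_0)$ at a reference point $z_0\in B$ that nearly realizes $\einf_B G$ and at which $A(z_0)\ls\|f\|^2_{\bmo_{\rho}(\cx)}$, the nearly constant part cancels and one is reduced to $\frac1{\mu(B)}\int_B\big(A(y)-A(z_0)\big)\,d\mu(y)\le\frac1{\mu(B)}\int_B A\,d\mu\ls\|f\|^2_{\bmo_{\rho}(\cx)}$, together with the averages over $y\in B$ of the oscillations $|H(y)-H(z_0)|$ and $|C(y)-C(z_0)|$.

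The main obstacle, and the place where the $\dz$-annular decay property is indispensable (which is why $\gl$, with its smoothly varying weight and no sharp cone boundary, avoids it), is controlling these last two oscillations. For $y,z_0\in B$ the two cones differ only on the thin space--time region $\Gamma(y)\,\triangle\,\Gamma(z_0)\subset\{(z,t):|\,d(y,z)-t\,|\ls r_B\}$, whose section at height $t$ is an annulus of thickness $\ls r_B$ at radius $\sim t$ and hence of measure $\ls(r_B/t)^{\dz}V_t(y)$ by \eqref{e3.1}. Feeding this into the integral of $|Q_t(f_2+f_3)|^2$ and using $(Q)_{\rm i}$, $(Q)_{\rm ii}$ and $|f_{2B}|\ls\|f\|_{\bmo_{\rho}(\cx)}\log(\rho(x_0)/r_B)$, one bounds $\frac1{\mu(B)}\int_B|H(y)-H(z_0)|\,d\mu(y)$ by $|f_{2B}|^2(r_B/\rho(x_0))^{\min\{2\dz_2,\dz\}}\ls\|f\|^2_{\bmo_{\rho}(\cx)}$, the annular-decay power absorbing the logarithm. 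For $|C(y)-C(z_0)|$ I would likewise reduce to the same symmetric difference of cones and apply the Cauchy--Schwarz inequality there, pairing $\big(\iint_{\Gamma(y)\triangle\Gamma(z_0)}|Q_t(f_1)|^2\,\frac{d\mu}{V_t}\frac{dt}{t}\big)^{1/2}$, controlled by $(A(y)+A(z_0))^{1/2}$ and hence by the $L^2$ bound on $f_1$, against the just-estimated factor $\big(\iint_{\Gamma(y)\triangle\Gamma(z_0)}|Q_t(f_2+f_3)|^2\,\frac{d\mu}{V_t}\frac{dt}{t}\big)^{1/2}\ls\|f\|_{\bmo_{\rho}(\cx)}$. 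The delicate interplay here---making the annular-decay gain beat the logarithmic growth of $|f_{2B}|$ while keeping the cross term under the $L^2$ control of $f_1$---is what I expect to demand the most care; once it is secured, letting $z_0$ approximate $\einf_B G$ sharply completes the small-ball estimate and hence the theorem.
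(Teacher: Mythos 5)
Your identification of the key mechanism agrees with the paper's: the $\dz$-annular decay property bounds the measure of the height-$t$ slice of $\Gamma(y)\triangle\Gamma(z_0)$ by $(r_B/t)^{\dz}V_t(\cdot)$, and this power is what absorbs the logarithmic growth $|f_B|\ls\log(\rho(x_0)/r_B)\|f\|_{\bmo_\rho(\cx)}$ of the constant part on the intermediate range $r_B\ls t\ls\rho(x_0)$. But the way you organize the small-ball case has a genuine gap. You require a reference point $z_0\in B$ that \emph{simultaneously} nearly realizes $\einf_B G$ and satisfies $A(z_0)\ls\|f\|^2_{\bmo_\rho(\cx)}$, and the second property is used essentially in your cross-term estimate. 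Such a $z_0$ need not exist: Chebyshev gives $A\le\lz$ only off a set of measure $\le C\|f\|^2_{\bmo_\rho(\cx)}\mu(B)/\lz$, while $\{z\in B: G(z)<\einf_B G+\ez\}$ merely has positive measure, possibly arbitrarily small, so the two sets are guaranteed to intersect only for $\lz$ depending on that measure rather than on $\|f\|_{\bmo_\rho(\cx)}$. The correct formulation (and the paper's) is to prove $\frac1{\mu(B)}\int_B(G(x)-G(z_0))\,d\mu(x)\ls\|f\|^2_{\bmo_\rho(\cx)}$ for \emph{almost every} $z_0\in B$ and then pass to the essential infimum; but then no smallness of $A(z_0)$ is available and your Cauchy--Schwarz bound for $|C(y)-C(z_0)|$ collapses. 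The paper avoids the cross term altogether: it splits the $t$-integral into $(0,8r)$, $(8r,8\rho(x_0))$ and $(8\rho(x_0),\fz)$, bounds the first and last pieces by their full averages over $B$ with no differencing, differences only the middle piece over $B(x,t)\triangle B(x',t)$, and only afterwards splits $f=(f-f_B)+f_B$ inside $|Q_t(\cdot)|^2$, so that only the two square terms ${\rm J_1}$ and ${\rm J_2}$ appear. If you wish to keep your decomposition, the repair is to note that on the only range where differencing is needed, $t>8r$, one has the pointwise bound $|Q_t(f_1)(z)|\ls\mu(2B)/V_t(z)\ls\|f\|_{\bmo_\rho(\cx)}$ for the relevant $z$, so the $f_1$-factor over $\Gamma(y)\triangle\Gamma(z_0)$ is controlled without any reference to $A(z_0)$.

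Two secondary points. First, $[S(f_3)]^2\ls|f_{2B}|^2$ does not follow from $(Q)_{\rm ii}$ alone: since $\bigl(\frac{t}{t+\rho}\bigr)^{2\dz_2}\to1$ as $t\to\fz$, the integral $\int_0^\fz\bigl(\frac{t}{t+\rho}\bigr)^{2\dz_2}\frac{dt}{t}$ diverges; you must combine $(Q)_{\rm ii}$ for $t\ls\rho$ with the decay factor $\bigl(\frac{\rho(\cdot)}{t+\rho(\cdot)}\bigr)^{\dz_1}$ from $(Q)_{\rm i}$ for $t\gs\rho$ (this is exactly what the paper's estimate \eqref{e3.14} and its splitting of the $t$-integral at $8\rho(x)$ in Case I accomplish; the same large-$t$ tail issue affects your $f_2$-term when $B\in\cd$). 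Second, $\Gamma(y)\triangle\Gamma(z_0)$ contains all heights $t$, and $(r_B/t)^{\dz}$ yields no gain when $t\ls r_B$; the ranges $t\ls r_B$ and $t\gs\rho(x_0)$ must be, and can be, handled by direct undifferenced estimates, which should be stated. These two points are repairable bookkeeping; the $z_0$-selection issue is the real hole as written.
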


\begin{proof}\rm By the homogeneity of
$\|\cdot\|_\bmoz$ and $\|\cdot\|_\bloz$, we may assume that
$f\in\bmo_{\rho}(\cx)$ and $\|f\|_{\bmo_{\rho}(\cx)}=1.$ Let
$B\equiv B(x_0,\,r)$. We prove Theorem \ref{t3.1} by considering the
following two cases. First, we notice that the
$L^2(\cx)$-boundedness of $g$ via Lemma \ref{l3.1} implies that
$S(f)$ is bounded on $L^2(\cx)$.

{\bf Case I.}\quad $r\ge \rho(x_0)$. In this case, we prove that
\begin{equation}\label{e3.5}
\frac1{\mu(B)}\int_B[S(f)(x)]^2\,d\mu(x)\ls1.
\end{equation}
For any $x\in B$, write
\begin{align*}
[S(f)(x)]^2&=\int_0^{8\rho(x)}\int_{d(x,\,y)<t} |Q_t(f)(y)|^2 \frac
{d\mu(y)}{V_t(y)}\frac {dt}{t}+\int_{8\rho(x)}^\fz\int_{d(x,\,y)<t}
\cdots\\
&\equiv[S_1(f)(x)]^2+[S_2(f)(x)]^2.
\end{align*}

By the $L^2(\cx)$-boundedness of $S(f)$, \eqref{e2.1} and Lemma
\ref{l2.1}, we have
\begin{equation}\label{e3.6}
\frac1{\mu(B)}\int_B[S_1(f\chi_{2B})(x)]^2\,d\mu(x)\ls
\frac1{\mu(B)}\int_{2B}|f(x)|^2\,d\mu(x)\ls1.
\end{equation}

Fix $x\in B$. Notice that if $d(x,\,y)<t$, then
\begin{equation}\label{e3.7}
t+d(y,\,z)\sim t+d(x,\,z)\quad{\rm and}\quad V_t(y)+V(y,\,z)\sim
V_t(x)+V(x,\,z).
\end{equation}
From \eqref{e3.7} and $(Q)_{\rm i}$, it follows that for all
$y\in\cx$ with $d(x,\,y)<t$, we have
\begin{align}\label{e3.8}
|Q_t(f\chi_{(2B)^\complement})(y)|&\ls \int_{(2B)^\complement}
\frac{1}{V_t(y)+V(y,\,z)}\Big(\frac{t}{t+d(y,\,z)}\Big)^\gz
|f(z)|\,d\mu(z)\\
&\ls\int_{(2B)^\complement}
\frac{1}{V_t(x)+V(x,\,z)}\Big(\frac{t}{t+d(x,\,z)}\Big)^\gz
|f(z)|\,d\mu(z)\nonumber\\
&\ls\Big(\frac{t}{r}\Big)^\gz\sum_{j=1}^\fz
\frac{2^{-j\gz}}{V_{2^{j-1}r}(x)}\int_{d(x,\,z)<2^jr}
|f(z)|\,d\mu(z)\ls\Big(\frac{t}{r}\Big)^\gz.\nonumber
\end{align}
Observe that by \eqref{e2.2}, for any $a\in(0, \fz)$, there exists a
constant $\wz C_a\in[1, \fz)$ such that for all $x$, $y\in\cx$ with
$d(x, y)\le a\rho(x)$,
\begin{equation}\label{e3.9}
\rho(y)/\wz {C}_a\le\rho(x)\le {\wz C}_a\rho(y).
\end{equation}
By this and $r\ge \rho(x_0)$, we obtain that for all $x\in B$,
$\rho(x)\ls r$. Notice that for all $x,\,y\in\cx$ satisfying
$d(x,\,y)<t$, we have
\begin{equation}\label{e3.10}
V_t(x)\sim V_t(y).
\end{equation}
It then follows from \eqref{e3.8} and \eqref{e3.10} together with
$\gz\in(0,\,\fz)$ that
\begin{equation}\label{e3.11}
\frac{1}{\mu(B)}\int_B[S_1(f\chi_{(2B)^\complement})(x)]^2\,d\mu(x)
\ls\frac{1}{\mu(B)}\int_B\int_0^{8\rho(x)}
\Big(\frac{t}{r}\Big)^{2\gz}\frac{dt}{t}\,d\mu(x)\ls1,
\end{equation}
which together with \eqref{e3.6} tells us that
\begin{equation}\label{e3.12}
\frac{1}{\mu(B)}\int_B[S_1(f)(x)]^2\,d\mu(x)\ls1.
\end{equation}

Fix $x\in B$. Notice that for all $y\in\cx$ with $d(x,\,y)<t$ and
$t\ge8\rho(x)$, by \eqref{e2.2}, we have
\begin{equation}\label{e3.13}
\frac{\rho(y)}{t+\rho(y)}\ls\lf(\frac{\rho(x)}{t}\r)^{\frac1{1+k_0}}.
\end{equation}
From \eqref{e3.7}, \eqref{e3.13} and $(Q)_{\rm i}$, it follows that
\begin{align}\label{e3.14}
|Q_t(f)(y)|&\le\int_{\cx}\frac{1}{V_t(y)+V(y,\,z)}
\Big(\frac{t}{t+d(y,\,z)}\Big)^\gz
\Big(\frac{\rho(y)}{t+\rho(y)}\Big)^{\dz_1}
|f(z)|\,d\mu(z)\\
&\ls\int_{\cx}
\frac{1}{V_t(x)+V(x,\,z)}\Big(\frac{t}{t+d(x,\,z)}\Big)^\gz
\Big(\frac{\rho(x)}t\Big)^{\frac{\dz_1}{1+k_0}}|f(z)|\,d\mu(z)\nonumber\\
&\ls\Big(\frac{\rho(x)}{t}\Big)^{\frac{\dz_1}{1+k_0}}
\sum_{j=0}^\fz\frac{2^{-j\gz}}{V_{2^{j-1}t}(x)}\int_{d(x,\,z)<2^jt}
|f(z)|\,d\mu(z)\ls\Big(\frac{\rho(x)}{t}\Big)^{\frac{\dz_1}{1+k_0}}.\nonumber
\end{align}
Thus,
$$
\frac{1}{\mu(B)}\int_B[S_2(f)(x)]^2\,d\mu(x)\ls\frac{1}{\mu(B)}
\int_B\int_{8\rho(x)}^\fz\Big(\frac{\rho(x)}{t}\Big)^{\frac{2\dz_1}{1+k_0}}
\frac{dt}{t}\,d\mu(x)\ls1,
$$
which along with \eqref{e3.12} yields \eqref{e3.5}. Moreover, the
fact that \eqref{e3.5} holds for all balls $B(x_0,\,r)$ with
$r\ge\rho(x_0)$ tells us that $S(f)(x)<\fz$ for almost every
$x\in\cx$.

{\bf Case II.}\quad $r<\rho(x_0)$. In this case, if
$r\ge\rho(x_0)/8$, then by \eqref{e2.1} and \eqref{e3.5}, we have
\begin{equation*}
\frac1{\mu(B)}\int_B\lf\{[S(f)(x)]^2-{\mathop\einf_B}[S(f)]^2\r\}\,d\mu(x)
\ls \frac1{\mu(8 B)}\dint_{8 B}[S(f)(x)]^2\,d\mu(x)\ls1,
\end{equation*}
which is desired. If $r<\rho(x_0)/8$, it suffices to prove that for
$\mu$-almost every $y\in B$,
\begin{equation*}
\frac1{\mu(B)}\int_B\lf\{[S(f)(x)]^2-[S(f)(y)]^2\r\}\,d\mu(x)\ls1.
\end{equation*}
For all $x\in B$, write
\begin{eqnarray*}
[S(f)(x)]^2&&=\int_0^{8 r}\int_{d(x,\,y)<t}
|Q_t(f)(y)|^2\frac{d\mu(y)}{V_t(y)}\frac{dt}t
+\int_{8 r}^{8\rho(x_0)}\cdots+\int_{8\rho(x_0)}^\fz\cdots\\
&&\equiv[S_r(f)(x)]^2+[S_{r,\,x_0}(f)(x)]^2+[S_\fz(f)(x)]^2.
\end{eqnarray*}
Observe that for $\mu$-almost every $y\in B$,
\begin{eqnarray*}
&&\frac1{\mu(B)}\int_B\lf\{[S(f)(x)]^2-[S(f)(y)]^2\r\}\,d\mu(x)\\
&&\hs\le \frac1{\mu(B)}\int_B\lf\{[S_r(f)(x)]^2+[S_\fz(f)(x)]^2+
[S_{r,\,x_0}(f)(x)]^2-[S_{r,\,x_0}(f)(y)]^2 \r\}\,d\mu(x).
\end{eqnarray*}\

We first prove that
\begin{equation}\label{e3.15}
\frac1{\mu(B)}\int_B[S_r(f)(x)]^2\,d\mu(x)\ls1.
\end{equation}
Write $f\equiv f_1+f_2+f_B,$ where $f_1\equiv(f-f_B)\chi_{2B}$ and
$f_2\equiv(f-f_B)\chi_{(2B)^\complement}$. By the
$L^2(\cx)$-boundedness of $S(f)$, \eqref{e2.1} and Lemma \ref{l2.1},
we have
\begin{equation}\label{e3.16}
\frac1{\mu(B)}\int_B[S_r(f_1)(x)]^2\,d\mu(x) \ls
\frac1{\mu(B)}\int_{2B}|f-f_B|^2\,d\mu(x)\ls1.
\end{equation}

Fix $x\in B$. Then for all $y\in\cx$ with $d(x,\,y)<t$, by ${\rm
(Q)_{i}}$, \eqref{e3.7}, \eqref{e2.1} and the fact that
$|f_{2^{j+1}B}-f_B|\ls j$ for all $j\in\nn$, we have
\begin{align*}
|Q_t(f_2)(y)|&\le\int_{(2B)^\complement}
\frac1{V_t(y)+V(y,\,z)}\lf(\frac{t}{t+d(y,\,z)}\r)^{\gz}|f(z)-f_B|\,d\mu(z)\\
&\ls\int_{(2B)^\complement}
\frac1{V_t(x)+V(x,\,z)}\lf(\frac{t}{t+d(x,\,z)}\r)^{\gz}
|f(z)-f_B|\,d\mu(z)\\
&\ls\sum_{j=1}^\fz \lf(\frac
t{2^{j-1}r}\r)^{\gz}\lf[\frac1{V_{2^{j-1}r}(x)}
\int_{2^{j+1}B}\lf[|f(z)-f_{2^{j+1}B}|+|f_{2^{j+1}B}-f_B|\r]
\,d\mu(z)\r]\nonumber\\
&\ls\lf(\frac tr\r)^{\gz}\sum_{j=1}^\fz j2^{-j\gz} \ls\lf(\frac
tr\r)^{\gz},\nonumber
\end{align*}
which together with \eqref{e3.10} leads to that
\begin{equation*}
\frac1{\mu(B)}\int_B[S_r(f_2)(x)]^2\,d\mu(x)\ls \int_0^{8
r}\lf(\frac tr\r)^{2\gz} \,\frac{dt}t\ls1.
\end{equation*}

By this and \eqref{e3.16}, to prove \eqref{e3.15}, it remains to
show that
\begin{equation}\label{e3.17}
\frac1{\mu(B)}\int_B[S_r(f_B)(x)]^2\,d\mu(x)\ls1.
\end{equation}
Let $k$ be the smallest positive integer satisfying
$2^kr\ge\rho(x_0)$. Then,
\begin{equation}\label{e3.18}
|f_B|\le|f_B-f_{2B}|+|f_{2B}-f_{2^2B}|+\cdots
+|f_{2^{k-1}B}-f_{2^kB}|+|f_{2^kB}|\ls\log\frac{\rho(x_0)}{r}.
\end{equation}
On the other hand, fix $x\in B(x_0,\,r)$ with $r<\rho(x_0)/8$. Then
for all $y\in\cx$ satisfying $d(x,\,y)<t$ with $t\in(0,\,8 r)$, by
\eqref{e3.9}, we have $\rho(y)\sim\rho(x_0)$. Hence, by $(Q)_{\rm
ii}$ and \eqref{e3.18}, we have
$$|Q_t(f_B)(y)|\ls \lf(\frac t{\rho(y)}\r)^{\dz_2}|f_B|\ls \lf(\frac
t{\rho(x_0)}\r)^{\dz_2} \log\frac{\rho(x_0)}{r},$$ which via $t\le 8
r<\rho(x_0)$ further yields \eqref{e3.17}.

Now we turn our attention to prove that
\begin{equation}\label{e3.19}
\frac1{\mu(B)}\int_B[S_\fz(f)(x)]^2\,d\mu(x)\ls1.
\end{equation}
Fix $x\in B(x_0,\,r)$. Let $a\in[1/8, \fz)$ and ${\wz C}_a$ be as in
\eqref{e3.9}. We first prove that for all $f\in \bmoz$ with
$\|f\|_\bmoz=1$, $y\in\cx$ with $d(x,\,y)<t$ and $t\le 8{\wz
C}_a\rho(x_0)$,
\begin{equation}\label{e3.20}
|Q_t(f)(y)|\ls 1.
\end{equation}
 In fact, by ${\rm (Q)_{i}}$ and \eqref{e3.7}, we obtain
\begin{eqnarray}\label{e3.21}
\quad |Q_t(f-f_{B(x,\,t)})(y)|
&&\le\int_{\cx}\frac1{V_t(y)+V(y,\,z)}
\lf(\frac{t}{t+d(y,\,z)}\r)^{\gz}
|f(z)-f_{B(x,\,t)}|\,d\mu(z)\\
&&\ls\int_{\cx}\frac1{V_t(x)+V(x,\,z)}
\lf(\frac{t}{t+d(x,\,z)}\r)^{\gz}
|f(z)-f_{B(x,\,t)}|\,d\mu(z)\nonumber\\
&&\ls\sum_{j=0}^\fz 2^{-j\gz}\frac1{V_{2^{j-1}t}(x)}
\int_{d(x,\,z)<2^jt}|f(z)-f_{B(x,\,t)}|\,d\mu(z)\ls1.\nonumber
\end{eqnarray}
It follows from \eqref{e3.9} that for all $y\in\cx$ with
$d(x,\,y)<t\le8{\wz C}_a\rho(x_0)$,
$\rho(y)\sim\rho(x_0)\sim\rho(x)$, which together with the fact that
for all $x\in\cx$,
$|f_{B(x,\,t)}|\le|f_{B(x,\,t)}-f_{B(x,\,\rho(x))}|+|f_{B(x,\,\rho(x))}|\ls
1+\log\frac{\rho(x)}{t}$ (by \eqref{e3.18}), and $(Q)_{\rm ii}$
shows that
\begin{eqnarray*}
|Q_t(f_{B(x,\,t)})(y)|\ls\lf(\frac t{\rho(y)}\r)^{\dz_2}
\lf(1+\log\frac{\rho(x)}{t}\r)\ls\lf(\frac t{\rho(x)}\r)^{\dz_2}
\lf(1+\log\frac{\rho(x)}{t}\r)\ls1.
\end{eqnarray*}
Combining this and \eqref{e3.21} proves \eqref{e3.20}.

Using \eqref{e3.20}, \eqref{e3.9}, \eqref{e3.10} and \eqref{e3.14},
we have that for all $x\in B$,
\begin{eqnarray*}
&&\int_{8\rho(x_0)}^\fz\int_{d(x,\,y)<t}|Q_t(f)(y)|^2\,
\frac{d\mu(y)}{V_t(y)}\frac{dt}t\\
&&\hs\le\int_{8\rho(x_0)}^{8{\wz
C}_a\rho(x_0)}\int_{d(x,\,y)<t}|Q_t(f)(y)|^2\,
\frac{d\mu(y)}{V_t(y)}\frac{dt}t+\int_{8{\wz
C}_a\rho(x_0)}^\fz\cdots\\
&&\hs\ls1+\int_{8{\wz
C}_a\rho(x_0)}^\fz\lf(\frac{\rho(x)}{t}\r)^{\frac{2\dz_1}{1+k_0}}
\,\frac{dt}t\ls1,
\end{eqnarray*}
which yields \eqref{e3.19}.

By \eqref{e3.15} and \eqref{e3.19}, we reduce the proof of Theorem
\ref{t3.1} to show that for $\mu$-almost every $x'\in B$,
\begin{equation}\label{e3.22}
\frac1{\mu(B)}\int_B\lf\{[S_{r,\,x_0}(f)(x)]^2-[S_{r,\,x_0}(f)(x')]^2
\r\}\,d\mu(x)\ls1.
\end{equation}
For any $x,\,x'\in B$ such that $S_{r,\,x_0}(f)(x)$ and
$S_{r,\,x_0}(f)(x')$ are finite, write
\begin{align*}
&[S_{r,\,x_0}(f)(x)]^2-[S_{r,\,x_0}(f)(x')]^2\\
&\hs=\int_{8
r}^{8\rho(x_0)}\int_{d(x,\,y)<t}|Q_t(f)(y)|^2\frac{d\mu(y)}{V_t(y)}\frac{dt}{t}-
\int_{8
r}^{8\rho(x_0)}\int_{d(x',\,y)<t}\cdots\\
&\hs\le\int_{8 r}^{8\rho(x_0)}\int_{B(x,\,t)\triangle
B(x',\,t)}|Q_t(f-f_B)(y)|^2\frac{d\mu(y)}{V_t(y)}\frac{dt}{t}\\
&\hs\hs+\int_{8 r}^{8\rho(x_0)}\int_{B(x,\,t)\triangle
B(x',\,t)}|Q_t(f_B)(y)|^2\frac{d\mu(y)}{V_t(y)}\frac{dt}{t}\equiv{\rm
J_1}+{\rm J_2},
\end{align*}
where $B(x,\,t)\triangle B(x',\,t)\equiv[B(x,\,t)\setminus
B(x',\,t)]\bigcup\,[B(x',\,t)\setminus B(x,\,t)]$.

By the facts that $x,\,x'\in B$ and $t\ge8 r$, we have
$B(x,\,t-2r)\subset[B(x,\,t)\bigcap B(x',\,t)]$. Since $\cx$ has the
$\dz$-annular decay property for some $\dz\in(0,\,1]$, we obtain
$$\mu(B(x,\,t)\setminus
B(x',\,t))\le\mu(B(x,\,t))-\mu(B(x,\,t-2r))
\ls\Big(\frac{r}{t}\Big)^{\dz}\mu(B(x,\,t)).$$ By symmetry, we also
have $\mu(B(x',\,t)\setminus
B(x,\,t))\ls\big(\frac{r}{t}\big)^{\dz}\mu(B(x',\,t))$, which
together with \eqref{e2.1} implies that
\begin{equation}\label{e3.23}
\mu(B(x,\,t)\triangle
B(x',\,t))\ls\Big(\frac{r}{t}\Big)^{\dz}\mu(B(x,\,t)).
\end{equation}
By $(Q)_{\rm i}$, \eqref{e3.7}, \eqref{e3.23}, \eqref{e3.10} and
\eqref{e2.1}, we obtain
\begin{align*}
{\rm J_1}&\ls\int_{8
r}^{8\rho(x_0)}\Big(\frac{r}{t}\Big)^\dz\lf[\int_\cx
\frac1{V_t(x)+V(x,\,z)}
\lf(\frac{t}{t+d(x,\,z)}\r)^{\gz}|f(z)-f_B|\,d\mu(z)\r]^2\frac{dt}{t}\\
&\ls\int_{8 r}^{8\rho(x_0)}\Big(\frac{r}{t}\Big)^\dz
\bigg[\frac{1}{\mu(2B)}\int_{2B}|f(z)-f_B|\,d\mu(z)\\
&\hs\hs+\sum_{j=1}^\fz\frac{ t^{\gz}}{(t+2^{j-1}r)^{\gz}}
\frac1{\mu(2^{j+1}B)}\int_{2^{j+1}B}|f(z)-f_B|
\,d\mu(z)\bigg]^2\frac{dt}{t}\\
&\ls\int_{8
r}^{8\rho(x_0)}\Big(\frac{r}{t}\Big)^\dz\bigg[1+\sum_{j=0}^\fz\frac{
t^{\gz}}{(t+2^{j-1}r)^{\gz}}\bigg]^2\frac{dt}{t}.
\end{align*}
Moreover, if $2\gz<\dz$, we then have
$${\rm J_1}\ls\int_{8r}^\fz\Big(\frac{r}{t}\Big)^\dz\frac{dt}{t}+
r^{\dz-2\gz}\int_{8 r}^\fz\frac{dt}{t^{\dz-2\gz+1}}\ls1;$$ if
$2\gz\ge\dz$, letting $\ez\in(0,\,\dz/2)$ yields that
$${\rm J_1}\ls1+\int_{8
r}^{8\rho(x_0)}\Big(\frac{r}{t}\Big)^\dz\bigg[\sum_{j=0}^\fz\frac{
t^{\gz}}{t^{\gz-\ez}(2^{j-1}r)^{\ez}}\bigg]^2\frac{dt}{t}\ls1+
r^{\dz-2\ez}\int_{8 r}^\fz\frac{dt}{t^{\dz-2\ez+1}}\ls1.$$ Thus,
${\rm J_1}\ls1$.

Notice that $r<\rho(x_0)/8$ and $t\in(8 r,\,8\rho(x_0))$. By
\eqref{e3.9}, we have that for any $x\in B$ and $y\in\cx$ with
$d(x,\,y)<t$, $\rho(x_0)\sim\rho(x)\sim\rho(y)$. Choosing
$\eta\in(0,\,1)$ such that $\eta\dz_2<\dz$, then by \eqref{e3.18},
$(Q)_{\rm ii}$ and \eqref{e3.10}, we have
$${\rm J_2}\ls\int_{8
r}^{8\rho(x_0)}\Big[\log\frac{\rho(x_0)}{r}\Big]^2\Big(\frac{r}{t}\Big)^\dz
\Big(\frac{t}{\rho(x_0)}\Big)^{\eta\dz_2}\frac{dt}{t}\ls\int_{8
r}^\fz\Big(\frac{r}{t}\Big)^{\dz-\eta\dz_2}\frac{dt}{t}\ls1.$$
Combining the estimates for ${\rm J_1}$ and ${\rm J_2}$ yields
\eqref{e3.22}, which completes the proof of Theorem \ref{t3.1}.
\end{proof}

As a consequence of Theorem \ref{t3.1}, we have the following
conclusion, which can be proved by an argument similar to the proof
of \cite[Corollary 6.1]{yyz}. We omit the details.
\begin{cor}\label{c3.1}
With the assumptions same as in Theorem \ref{t3.1}, then there
exists a positive constant $C$ such that for all
$f\in\bmo_{\rho}(\cx)$, $S(f)\in\blo_{\rho}(\cx)$ and
$\|S(f)\|_{\blo_{\rho}(\cx)} \le C\|f\|_{\bmo_{\rho}(\cx)}.$
\end{cor}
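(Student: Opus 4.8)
The plan is to derive Corollary \ref{c3.1} directly from Theorem \ref{t3.1} by a square-root (i.e. $L^2$-to-$L^1$) argument, using two facts about $S(f)$: it is nonnegative, and, by the proof of Theorem \ref{t3.1}, it is finite $\mu$-almost everywhere. The elementary inequality driving the whole argument is that for all reals $a\ge b\ge0$ one has $(a-b)^2\le a^2-b^2$, and hence $a-b\le\sqrt{a^2-b^2}$.

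First I would estimate the oscillation on a small ball $B=B(x_0,\,r)\notin\cd$, i.e. $r<\rho(x_0)$. Put $m\equiv\einf_B S(f)$; since $S(f)\ge0$ and $t\mapsto t^2$ is increasing on $[0,\,\fz)$, we have $m^2=\einf_B[S(f)]^2$, and $S(f)(y)\ge m$ for $\mu$-almost every $y\in B$. Applying the pointwise bound $S(f)(y)-m\le\{[S(f)(y)]^2-m^2\}^{1/2}$ and then Jensen's (equivalently, the Cauchy--Schwarz) inequality for the concave function $\sqrt{\,\cdot\,}$ gives
$$\frac1{\mu(B)}\int_B\bigl[S(f)(y)-\einf_B S(f)\bigr]\,d\mu(y)\le\Bigl\{\frac1{\mu(B)}\int_B\bigl([S(f)(y)]^2-\einf_B[S(f)]^2\bigr)\,d\mu(y)\Bigr\}^{1/2},$$
and the right-hand side is at most $\|[S(f)]^2\|_{\blo_{\rho}(\cx)}^{1/2}$ by the definition of the $\blo_{\rho}(\cx)$ norm.

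For a large ball $B\in\cd$, i.e. $r\ge\rho(x_0)$, the same Jensen step (now with no subtraction) yields $\frac1{\mu(B)}\int_B S(f)\,d\mu\le\{\frac1{\mu(B)}\int_B[S(f)]^2\,d\mu\}^{1/2}\le\|[S(f)]^2\|_{\blo_{\rho}(\cx)}^{1/2}$. Taking the supremum over both families of balls and invoking Theorem \ref{t3.1}, which bounds $\|[S(f)]^2\|_{\blo_{\rho}(\cx)}\le C\|f\|^2_{\bmo_{\rho}(\cx)}$, we conclude $\|S(f)\|_{\blo_{\rho}(\cx)}\le C\|f\|_{\bmo_{\rho}(\cx)}$, as desired.

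The main (indeed essentially the only) obstacle is the bookkeeping needed to legitimize the identity $\einf_B[S(f)]^2=(\einf_B S(f))^2$ and to ensure these essential infima are genuine finite numbers; both rest on the nonnegativity of $S(f)$ together with the $\mu$-a.e. finiteness already established within the proof of Theorem \ref{t3.1}. Once these are in place, the passage from the $L^2$-type oscillation estimate of Theorem \ref{t3.1} to the $L^1$-type estimate defining $\blo_{\rho}(\cx)$ is entirely routine, which is why the statement can be handled as a corollary.
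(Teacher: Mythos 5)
Your argument is correct and is exactly the standard square-root reduction that the paper has in mind: the authors omit the proof, referring to the analogous Corollary 6.1 of \cite{yyz}, which proceeds by the same route (nonnegativity and a.e.\ finiteness of $S(f)$, the identity $\einf_B[S(f)]^2=(\einf_B S(f))^2$, the inequality $a-b\le\sqrt{a^2-b^2}$ for $a\ge b\ge0$, and Cauchy--Schwarz). Nothing further is needed.
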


\begin{rem}\label{r3.1}\rm
In Theorem \ref{t3.1} and Corollary \ref{c3.1}, if we replace the
assumption that the Littlewood-Paley $g$-function in \eqref{e3.2} is
bounded on $L^2(\cx)$ by that the Lusin-area function $S(f)$ in
\eqref{e3.3} is bounded on $L^2(\cx)$, then Theorem \ref{t3.1} and
Corollary \ref{c3.1} still hold.
\end{rem}

Now we study the boundedness of $\gl$ function. In this case, $\cx$
is not necessary to have the $\dz$-annular decay property.

\begin{thm}\label{t3.2}\rm
Let $\cx$ be a doubling metric measure space. Let $\rho$ be an
admissible function on $\cx$ and the $\gl$ function $\gl(f)$ as in
\eqref{e3.4} with $\lz\in(3n,\,\fz)$. Assume that the
Littlewood-Paley $g$-function in \eqref{e3.2} is bounded on
$L^2(\cx)$. Then there exists a positive constant $C$ such that for
all $f\in\bmo_{\rho}(\cx)$, $[\gl(f)]^2\in\blo_{\rho}(\cx)$ and
$\|[\gl(f)]^2\|_{\blo_{\rho}(\cx)}\le C\|f\|^2_{\bmo_{\rho}(\cx)}$.
\end{thm}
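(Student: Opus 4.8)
The plan is to follow the organization of the proof of Theorem \ref{t3.1} almost verbatim, replacing the cone integral $\int_{d(x,\,y)<t}$ by the weighted full integral $\iint_{\cx\times(0,\,\fz)}(\frac{t}{t+d(x,\,y)})^{\lz}\cdots$, and to isolate the single place where the $\dz$-annular decay property entered Theorem \ref{t3.1} and show that here it is superseded by the elementary smoothness of the weight. First I would note that, since $\lz>3n>n$, Lemma \ref{l3.1} together with the assumed $L^2(\cx)$-boundedness of $g$ gives that $\gl(f)$ is bounded on $L^2(\cx)$; this is the seed from which the off-diagonal estimates are bootstrapped. As in Theorem \ref{t3.1}, by homogeneity I may assume $\|f\|_{\bmoz}=1$, fix $B\equiv B(x_0,\,r)$, and split into Case I ($r\ge\rho(x_0)$, where one proves the analogue of \eqref{e3.5}) and Case II ($r<\rho(x_0)$), the latter into $r\ge\rho(x_0)/8$ (reduced to Case I by \eqref{e2.1}) and $r<\rho(x_0)/8$ (the genuine case).

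In Case I and in the routine parts of Case II I would imitate the estimates of Theorem \ref{t3.1}, but every pointwise bound on $|Q_t(\cdot)(y)|$ (the analogues of \eqref{e3.8} and \eqref{e3.14}, with the splitting $f=f_1+f_2+f_B$) must now be integrated against $(\frac{t}{t+d(x,\,y)})^{\lz}\frac{1}{V_t(y)}$ over all of $\cx$ rather than over one cone. The clean way to organize this is to decompose $\cx$ into annuli $\{y:\ 2^kt\le d(x,\,y)<2^{k+1}t\}$, $k\ge0$: on each the weight contributes $2^{-k\lz}$, the doubling property \eqref{e2.1} bounds the measure of the annulus by $2^{kn}V_t(x)$, and the comparison $V_t(x)\ls 2^{kn}V_t(y)$ supplies a further $2^{kn}$. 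For the term built from $f-f_B$ one also pays for the far-field growth of $|Q_t(f-f_B)(y)|^2$; bounding this growth by a power of the volume ratio gives a third factor $2^{kn}$, so the net per-annulus weight is at worst $2^{-k(\lz-3n)}$, and it is exactly here that $\lz>3n$ guarantees summability. After summation each piece is controlled as in Theorem \ref{t3.1}; for instance the diagonal near-time part (with $f\chi_{2B}$) reduces, via Fubini and $\int_\cx(\frac{t}{t+d(x,\,y)})^\lz\,d\mu(x)\ls V_t(y)$ (which needs only $\lz>n$), to the $L^2(\cx)$-bound of $g(f\chi_{2B})$.

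The heart of the proof, and the point at which the $\gl$ function departs from the Lusin-area function, is the difference estimate replacing \eqref{e3.22}. Writing the middle piece (the part with $t\in(8r,\,8\rho(x_0))$) of $[\gl(f)(x)]^2-[\gl(f)(x')]^2$ as a \emph{single} integral
$$\int_{8r}^{8\rho(x_0)}\int_\cx\lf[\lf(\frac{t}{t+d(x,\,y)}\r)^{\lz}-\lf(\frac{t}{t+d(x',\,y)}\r)^{\lz}\r]|Q_t(f)(y)|^2\,\frac{d\mu(y)}{V_t(y)}\,\frac{dt}t,$$
I observe that, in contrast to \eqref{e3.22}, the \emph{domain} of integration no longer depends on $x$; only the weight does. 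Since $d(x,\,x')\le 2r\le t/4$ throughout this range, one has $t+d(x',\,y)\sim t+d(x,\,y)$, so the mean value theorem applied to $s\mapsto(\frac{t}{t+s})^{\lz}$ yields
$$\lf|\lf(\frac{t}{t+d(x,\,y)}\r)^{\lz}-\lf(\frac{t}{t+d(x',\,y)}\r)^{\lz}\r|\ls\frac rt\lf(\frac{t}{t+d(x,\,y)}\r)^{\lz}.$$
This is precisely the analogue of the annular-decay bound \eqref{e3.23} with $\dz=1$, but it is a property of the kernel weight of $\gl$ rather than of $\cx$, which is exactly why no $\dz$-annular decay property is needed. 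Inserting this bound, splitting $f=(f-f_B)+f_B$ into the analogues of $\mathrm{J_1}$ and $\mathrm{J_2}$, and expanding the $y$-integral in annuli as above (using $\lz>3n$), the gained factor $\frac rt$ makes $\int_{8r}^\fz\frac rt\,(1+\log\frac tr)^2\,\frac{dt}t<\fz$, which gives \eqref{e3.22} and completes the proof.

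I expect the main obstacle to be the bookkeeping of the far-field part of the $y$-integral: for $y$ far from $B$ the averages of $f-f_B$ over balls centered at $y$ grow, and one must confirm that the weight's decay $2^{-k\lz}$ dominates \emph{simultaneously} the measure of the annulus, the $V_t(x)/V_t(y)$ mismatch, and this growth—which is what forces $\lz>3n$—uniformly in $t\in(8r,\,8\rho(x_0))$ and after the outer average over $x\in B$. Everything else is a faithful transcription of Theorem \ref{t3.1}, the crucial simplification being that the weighted integral over $\cx$ depends on the base point only through a Lipschitz weight, so the regularity of $\cx$ plays no role.
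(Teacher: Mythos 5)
Your proposal is correct in substance, and its central mechanism --- that the Lipschitz behaviour of the weight $s\mapsto(\frac{t}{t+s})^{\lz}$, via the mean value theorem, supplies the gain (a factor $r/t$, resp.\ $r/(2^kr)$) that the $\dz$-annular decay property supplied in Theorem \ref{t3.1} --- is exactly the mechanism of the paper's proof: the paper's bound $|(\frac{t}{t+d(x,\,y)})^{\lz}-(\frac{t}{t+d(x',\,y)})^{\lz}|\ls rt^{\lz}/(2^kr)^{\lz+1}$ is precisely your estimate. What differs is the decomposition. The paper does not slice in time at $8r$ and $8\rho(x_0)$; it splits $\cx\times(0,\fz)$ into the Carleson box $J(0)=\{(y,t):\ d(y,x_0)<2r,\ t<2r\}$ and its complement, bounds the average over $B$ of the $J(0)$-part directly (the estimates ${\rm I_1},{\rm I_2},{\rm I_3}$ and \eqref{e3.30}), and applies the differencing to the \emph{entire} complement, decomposed into shells $J(k)\setminus J(k-1)$ on which $t+d(x,y)\sim 2^kr$ and $t\ls 2^kr$ hold simultaneously. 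That coupling of the $t$- and $y$-scales is what your time-slicing forfeits, and it is where your remaining bookkeeping lives: (a) in the slice $t\in(0,8r)$ with the full $y$-integral, the crude per-annulus bound $2^{-k(\lz-3n)}$ alone yields a $t$-independent constant, and $\int_0^{8r}\frac{dt}{t}$ diverges; you must retain the kernel decay $(t/r)^{\gz}$ on the near annuli and note that the far annuli begin at $k\sim\log_2(r/t)$, so their sum is $\ls(t/r)^{\lz-3n}$, restoring integrability in $t$; (b) for the terms involving $f_B$ (and the factor $(\rho(y)/(t+\rho(y)))^{\dz_1}$ in the large-$t$ slice), $\rho(y)$ for $y$ far from $B$ is only polynomially comparable to $\rho(x_0)$ via \eqref{e2.2}, which injects an extra exponential-in-$k$ factor into the annular sums; this is absorbed by lowering $\dz_1,\dz_2$ to small powers $\eta\dz_1,\eta\dz_2$, which is exactly the role of the parameters $\eta_1,\eta_2,\eta_3$ in the paper's estimates of ${\rm E_1}$, ${\rm E_2}$ and ${\rm G_2}$ (e.g.\ the constraint $\eta_3(1+k_0)\dz_2<1$). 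With these two points supplied your argument closes, and it is a legitimate variant of the paper's proof; the paper's $J(k)$-decomposition simply makes both issues transparent by tying $t$ to $2^kr$ from the outset.
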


\begin{proof}\rm
Again, by the homogeneity of $\|\cdot\|_\bmoz$ and
$\|\cdot\|_\bloz$, we may assume that $f\in\bmo_{\rho}(\cx)$ and
$\|f\|_{\bmo_{\rho}(\cx)}=1.$

Let $B\equiv B(x_0,\,r)$. For any nonnegative integer $k$, let
$$
J(k)\equiv\{(y,\,t)\in\cx\times(0,\,\fz):\,d(y,\,x_0)<2^{k+1}r\,\,{\rm
and}\,\,0<t<2^{k+1}r\}.
$$
For any $f\in\bmo_{\rho}(\cx)$ and $x\in\cx$, write
\begin{align*}\label{gl}
[g^*_\lz(f)(x)]^2&=\iint_{J(0)}\lf(\frac{t}{t+d(x,\,y)}\r)^\lz
|Q_t(f)(y)|^2 \frac {d\mu(y)}{V_t(y)}\frac {dt}{t}
+\iint_{[\cx\times(0,\,\fz)]\setminus
J(0)}\cdots\\
&\equiv [\glz(f)(x)]^2+[\glf(f)(x)]^2.
\end{align*}
We now consider the following two cases. Notice that the
$L^2(\cx)$-boundedness of $g$ via Lemma \ref{l3.1} implies that
$\gl(f)$ is bounded on $L^2(\cx)$.

{\bf Case I.}\quad$r\ge\rho(x_0)$. In this case, we first prove that
\begin{equation}\label{e3.24}
\frac1{\mu(B)}\int_B[\glz(f)(x)]^2\,d\mu(x)\ls1.
\end{equation}

For any $x\in B$, write
\begin{align*}
[\glz(f)(x)]^2&\le\iint_{\gfz{J(0)}{d(x,\,y)<t}}\lf(\frac{t}{t+d(x,\,y)}\r)^\lz
|Q_t(f)(y)|^2 \frac {d\mu(y)}{V_t(y)}\frac {dt}{t}\\
&\hs+\iint_{\gfz{J(0)}{d(x,\,y)\ge
t}}\lf(\frac{t}{t+d(x,\,y)}\r)^\lz
|Q_t(f\chi_{8B})(y)|^2 \frac {d\mu(y)}{V_t(y)}\frac {dt}{t}\\
&\hs+\iint_{\gfz{J(0)}{d(x,\,y)\ge
t}}\lf(\frac{t}{t+d(x,\,y)}\r)^\lz
|Q_t(f\chi_{(8B)^\complement})(y)|^2 \frac {d\mu(y)}{V_t(y)}\frac {dt}{t}\\
&\equiv{\rm I_1}(x)+{\rm I_2}(x)+{\rm I_3}(x).
\end{align*}

Notice that for all $x\in B$, ${\rm I_1}(x)\le[S(f)(x)]^2$. It then
follows from \eqref{e3.5} that
\begin{equation}\label{e3.25}
\frac1{\mu(B)}\int_B{\rm I_1}(x)\,d\mu(x)\ls1.
\end{equation}
We remark that in the proof of \eqref{e3.5}, we do not need the
$\dz$-annular decay property of $\cx$.

As for ${\rm I_2}(x)$, by the $L^2(\cx)$-boundedness of $\gl(f)$,
\eqref{e2.1} and Lemma \ref{l2.1}, we have
\begin{equation}\label{e3.26}
\frac1{\mu(B)}\int_B{\rm I_2}(x)\,d\mu(x)\ls
\frac1{\mu(B)}\int_{8B}|f(x)|^2\,d\mu(x)\ls1.
\end{equation}

To deal with ${\rm I_3}(x)$, we notice that for all
$z\in(8B)^{\complement}$  and $y\in\cx$ with $d(y,\,x_0)< 2r$,
$d(y,\,z)\sim d(x_0,\,z)$ and $V(y,\,z)\sim V(x_0,\,z)$. Hence,
\begin{align*}
{\rm
I_3}(x)&\ls\int_0^{2r}\!\!\!\int_{\gfz{d(y,\,x_0)<2r}{d(x,\,y)\ge
t}}
\Big(\frac{t}{t+d(x,\,y)}\Big)^\lz\\
&\hs\hs\times\bigg[\int_{(8B)^{\complement}}\frac{1}{V_t(y)+V(y,\,z)}
\Big(\frac{t}{t+d(y,\,z)}\Big)^\gz|f(z)|\,d\mu(z)\bigg]^2
\frac {d\mu(y)}{V_t(y)}\frac {dt}{t}\\
&\ls\int_0^{2r}\!\!\!\int_{\gfz{d(y,\,x_0)<2r}{d(x,\,y)\ge t}}
\Big(\frac{t}{t+d(x,\,y)}\Big)^\lz
\bigg[\int_{(8B)^{\complement}}\frac{1}{V(x_0,\,z)}
\Big(\frac{t}{d(x_0,\,z)}\Big)^\gz|f(z)|\,d\mu(z)\bigg]^2
\frac {d\mu(y)}{V_t(y)}\frac {dt}{t}\\
&\ls\int_0^{2r}\Big(\frac{t}{r}\Big)^{2\gz}\sum_{k=0}^\fz
\int_{2^kt\le d(x,\,y)<2^{k+1}t}2^{-k\lz}2^{kn}\frac
{d\mu(y)}{V_{2^{k+1}t}(x)}\frac {dt}{t}\ls1,
\end{align*}
where in the last inequality we used the fact that $\lz>n$.
Furthermore, we obtain
\begin{equation*}
\frac1{\mu(B)}\int_B{\rm I_3}(x)\,d\mu(x)\ls1,
\end{equation*}
which together with \eqref{e3.25} and \eqref{e3.26} proves
\eqref{e3.24}.

Now we prove that
\begin{equation}\label{e3.27}
\frac1{\mu(B)}\int_B[\glf(f)(x)]^2\,d\mu(x)\ls1.
\end{equation}
Notice that for $(y,\,t)\in J(k)\setminus J(k-1)$ with $k\in\nn$ and
$x\in B$, $t+d(x,\,y)\sim 2^k r$. Thus,
\begin{align*}
&[\glf(f)(x)]^2\\
&\hs\ls\sum_{k=1}^\fz\iint_{J(k)\setminus J(k-1)}\Big(\frac{t}{2^k
r}\Big)^\lz\\
&\hs\hs\times\lf[\int_{2^{k+4}B}
\frac{1}{V_t(y)+V(y,\,z)}\Big(\frac{t}{t+d(y,\,z)}\Big)^\gz
\Big(\frac{\rho(y)}{t+\rho(y)}\Big)^{\dz_1}|f(z)|\,d\mu(z)\r]^2
\frac {d\mu(y)}{V_t(y)}\frac {dt}{t}\\
&\hs\hs+\sum_{k=1}^\fz\iint_{J(k)\setminus J(k-1)}\Big(\frac{t}{2^k
r}\Big)^\lz\lf[\int_{(2^{k+4}B)^{\complement}}\cdots\,d\mu(z)\r]^2
\frac {d\mu(y)}{V_t(y)}\frac {dt}{t} \equiv{\rm E_1}(x)+{\rm
E_2}(x).
\end{align*}

The fact that $r\ge\rho(x_0)$ and \eqref{e2.2} imply that for all
$y\in\cx$ with $d(y,\,x_0)<2^{k+1}r$,
\begin{equation}\label{e3.28}
\rho(y)\ls[\rho(x_0)]^{\frac{1}{1+k_0}}(2^{k}r)^{\frac{k_0}{1+k_0}}.
\end{equation}

By the assumption that $\lz\in(3n,\,\fz)$, we choose
$\eta_1\in(0,\,\dz_1)$ such that $\lz-2\eta_1-3n>0$. By
\eqref{e3.28}, we obtain
\begin{align*}
{\rm
E_1}(x)&\ls\sum_{k=1}^\fz\int_0^{2^{k+1}r}\!\!\!\int_{d(y,\,x_0)<2^{k+1}r}
\Big(\frac{t}{2^k r}\Big)^\lz\Big(\frac{2^k r}{t}\Big)^{2n}
\Big(\frac{[\rho(x_0)]^{\frac{1}{1+k_0}}(2^{k}r)^{\frac{k_0}{1+k_0}}}{t}\Big)^{2\eta_1}
\frac {d\mu(y)}{V_t(y)}\frac {dt}{t}\\
&\ls\sum_{k=1}^\fz\int_0^{2^{k+1}r}\Big(\frac{t}{2^k r}\Big)^{\lz}
\Big(\frac{2^k r}{t}\Big)^{3n}
\Big(\frac{[\rho(x_0)]^{\frac{1}{1+k_0}}(2^{k}r)^{\frac{k_0}{1+k_0}}}{t}\Big)^{2\eta_1}
\frac {dt}{t}\ls\sum_{k=1}^\fz\Big[\frac{\rho(x_0)}{2^k
r}\Big]^{\frac{2\eta_1}{1+k_0}}\ls1.
\end{align*}

Choose $\eta_2\in(0,\,\dz_1)$ such that $\lz+2\gz-2\eta_2-n>0$, then
by \eqref{e3.28} and the fact that for $z\in(2^{k+4}B)^\complement$
and $y\in\cx$ with $d(y,\,x_0)< 2^{k+1}r$, $d(y,\,z)\sim d(x_0,\,z)$
and $V(y,\,z)\sim V(x_0,\,z)$, we have
\begin{align*}
{\rm
E_2}(x)&\ls\sum_{k=1}^\fz\int_0^{2^{k+1}r}\!\!\!\int_{d(y,\,x_0)<2^{k+1}r}
\Big(\frac{t}{2^k r}\Big)^\lz\\
&\hs\hs\times\lf[\int_{(2^{k+4}B)^{\complement}}
\frac{1}{V(x_0,\,z)}\Big(\frac{t}{d(x_0,\,z)}\Big)^\gz
\Big(\frac{\rho(y)}{t+\rho(y)}\Big)^{\eta_2}|f(z)|\,d\mu(z)\r]^2
\frac {d\mu(y)}{V_t(y)}\frac {dt}{t}\\
&\ls\sum_{k=1}^\fz\int_0^{2^{k+1}r}\!\!\!\int_{d(y,\,x_0)<2^{k+1}r}
\Big(\frac{t}{2^k r}\Big)^\lz\Big(\frac{t}{2^k r}\Big)^{2\gz}
\Big(\frac{[\rho(x_0)]^{\frac{1}{1+k_0}}(2^{k}r)^{\frac{k_0}{1+k_0}}}{t}\Big)^{2\eta_2}
\frac {d\mu(y)}{V_t(y)}\frac {dt}{t}\\
&\ls\sum_{k=1}^\fz\int_0^{2^{k+1}r}\Big(\frac{t}{2^k
r}\Big)^{\lz+2\gz-n}
\Big(\frac{[\rho(x_0)]^{\frac{1}{1+k_0}}(2^{k}r)^{\frac{k_0}{1+k_0}}}{t}\Big)^{2\eta_2}
\frac {dt}{t} \ls\sum_{k=1}^\fz\Big[\frac{\rho(x_0)}{2^k
r}\Big]^{\frac{2\eta_2}{1+k_0}}\ls1,
\end{align*}
which together with the estimate of ${\rm E_1}(x)$ yields
\eqref{e3.27}.

Combining \eqref{e3.24} and \eqref{e3.27} yields that
\begin{equation}\label{e3.29}
\frac{1}{\mu(B)}\int_B[\gl(f)(x)]^2\,d\mu(x)\ls1.
\end{equation}
Moreover, from the fact that \eqref{e3.29} holds for all balls
$B(x_0,\,r)$ with $r\ge\rho(x_0)$, it follows that $\gl(f)(x)<\fz$
for almost every $x\in\cx$.

{\bf Case II.} \quad $r<\rho(x_0)$. In this case, if
$r\ge\rho(x_0)/16$, then by \eqref{e2.1} and \eqref{e3.29}, we
obtain the desired estimate that
\begin{equation*}
\frac1{\mu(B)}\int_B\lf\{[\gl(f)(x)]^2-{\mathop\einf_B}[\gl(f)]^2\r\}\,d\mu(x)
\ls \frac1{\mu(8 B)}\dint_{8 B}[\gl(f)(x)]^2\,d\mu(x)\ls1.
\end{equation*}

If $r<\rho(x_0)/16$, it is enough to show that for all $x'\in B$
such that $\glf(f)(x')<\fz$,
\begin{eqnarray*}
\frac1{\mu(B)}\int_B\lf\{[\glz(f)(x)]^2+
[\glf(f)(x)]^2-[\glf(f)(x')]^2 \r\}\,d\mu(x)\ls1.
\end{eqnarray*}

We first prove that
\begin{equation}\label{e3.30}
\frac1{\mu(B)}\int_B[\glz(f)(x)]^2\,d\mu(x)\ls1.
\end{equation}
To this end, write $f\equiv f_1+f_2+f_B,$ where
$f_1\equiv(f-f_B)\chi_{8B}$ and
$f_2\equiv(f-f_B)\chi_{(8B)^\complement}$. By the
$L^2(\cx)$-boundedness of $\gl(f)$, \eqref{e2.1} and Lemma
\ref{l2.1}, we have
\begin{equation}\label{e3.31}
\frac1{\mu(B)}\int_B[\glz(f_1)(x)]^2\,d\mu(x) \ls
\frac1{\mu(B)}\int_{8B}|f-f_B|^2\,d\mu(x)\ls1.
\end{equation}
Notice that for $z\in(8B)^\complement$ and $y\in\cx$ with
$d(y,\,x_0)< 2r$, $d(y,\,z)\sim d(x_0,\,z)$ and $V(y,\,z)\sim
V(x_0,\,z)$. This together with ${\rm (Q)_{i}}$, \eqref{e2.1} and
the fact that $|f_{2^{j+1}B}-f_B|\ls j$ for all $j\in\nn$ yields
that
\begin{align*}
|Q_t(f_2)(y)|&\ls\int_{(8B)^\complement}
\frac1{V_t(y)+V(y,\,z)}\lf(\frac{t}{t+d(y,\,z)}\r)^{\gz}
|f(z)-f_B|\,d\mu(z)\\
&\ls\int_{(8B)^\complement}
\frac1{V(x_0,\,z)}\lf(\frac{t}{d(x_0,\,z)}\r)^{\gz}
|f(z)-f_B|\,d\mu(z)\ls\lf(\frac tr\r)^{\gz},
\end{align*}
where we omitted some routine computation. Hence, by an argument
similar to the estimates of \eqref{e3.11} and ${\rm I_3}(x)$, we
obtain
\begin{align}\label{e3.32}
&\frac1{\mu(B)}\int_B[\glz(f_2)(x)]^2\,d\mu(x)\\
&\hs\ls\frac1{\mu(B)}\int_B\int_0^{2r}\int_{d(x_0,\,y)<2r}
\bigg(\frac{t}{t+d(x,\,y)}\bigg)^\lz\lf(\frac tr\r)^{2\gz}
\,\frac{d\mu(y)}{V_t(y)}\frac{dt}t\,d\mu(x)\nonumber\\
&\hs\ls\frac1{\mu(B)}\int_B\int_0^{2r}\int_{d(x,\,y)<t} \lf(\frac
tr\r)^{2\gz}
\,\frac{d\mu(y)}{V_t(y)}\frac{dt}t\,d\mu(x)\nonumber\\
&\hs\hs+\frac1{\mu(B)}\int_B\int_0^{2r}
\int_{\gfz{d(x_0,\,y)<2r}{d(x,\,y)\ge t}}
\bigg(\frac{t}{t+d(x,\,y)}\bigg)^\lz\lf(\frac tr\r)^{2\gz}
\,\frac{d\mu(y)}{V_t(y)}\frac{dt}t\,d\mu(x)\ls1.\nonumber
\end{align}
For $y\in\cx$ with $d(x_0,\,y)<2r<\rho(x_0)/8$, by \eqref{e3.9}, we
have $\rho(x_0)\sim\rho(y)$, which together with $(Q)_{\rm ii}$ and
\eqref{e3.18} leads to
$$
|Q_t(f_B)(y)|\ls \lf(\frac t{\rho(y)}\r)^{\dz_2}|f_B|\ls \lf(\frac
t{\rho(x_0)}\r)^{\dz_2}
\log\frac{\rho(x_0)}{r}\ls\Big(\frac{t}{r}\Big)^{\dz_2}.
$$ Then, similarly to
the estimate of \eqref{e3.32}, we obtain
\begin{equation*}
\frac{1}{\mu(B)}\int_B[\glz(f_B)(x)]^2\,d\mu(x)\ls1,
\end{equation*}
which together with \eqref{e3.31} and \eqref{e3.32} yields
\eqref{e3.30}.

The proof of Theorem \ref{t3.2} now is reduced to show that for all
$x'\in B$ such that $\glf(f)(x')<\fz$,
\begin{equation}\label{e3.33}
\frac1{\mu(B)}\int_B\lf\{[\glf(f)(x)]^2-[\glf(f)(x')]^2
\r\}\,d\mu(x)\ls1.
\end{equation}
For $x,\,x'\in B$ such that $\glf(x)$ and $\glf(x')$ are finite,
write
\begin{align*}
&[\glf(f)(x)]^2-[\glf(f)(x')]^2\\
&\hs\le\iint_{\cx\times(0,\,\fz)\setminus
J(0)}\lf|\lf(\frac{t}{t+d(x,\,y)}\r)^\lz
-\lf(\frac{t}{t+d(x',\,y)}\r)^\lz\r|
|Q_t(f)(y)|^2 \frac {d\mu(y)}{V_t(y)}\frac {dt}{t}\\
&\hs\ls\sum_{k=1}^\fz\iint_{J(k)\setminus J(k-1)}\frac{rt^\lz}{(2^k
r)^{\lz+1}}|Q_t(f-f_B)(y)|^2\frac{d\mu(y)}{V_t(y)}\frac{dt}{t}\\
&\hs\hs+\sum_{k=1}^\fz\iint_{J(k)\setminus J(k-1)}\frac{rt^\lz}{(2^k
r)^{\lz+1}}|Q_t(f_B)(y)|^2\frac{d\mu(y)}{V_t(y)}\frac{dt}{t}\equiv{\rm
G_1}+{\rm G_2}.
\end{align*}
Using the assumption that $\lz\in(3n,\,\fz)$ and $(Q)_{\rm i}$, we
have
\begin{align*}
{\rm G_1} &\ls\sum_{k=1}^\fz\iint_{J(k)\setminus
J(k-1)}\frac{rt^\lz}{(2^k
r)^{\lz+1}}\\
&\hs\times\lf[\int_{2^{k+4}B}
\frac{1}{V_t(y)+V(y,\,z)}\Big(\frac{t}{t+d(y,\,z)}\Big)^\gz
|f(z)-f_B|\,d\mu(z)\r]^2
\frac {d\mu(y)}{V_t(y)}\frac {dt}{t}\\
&\hs+\sum_{k=1}^\fz\iint_{J(k)\setminus J(k-1)}\frac{rt^\lz}{(2^k
r)^{\lz+1}}\lf[\int_{(2^{k+4}B)^{\complement}} \cdots\,d\mu(z)\r]^2
\frac {d\mu(y)}{V_t(y)}\frac {dt}{t}\\
&\ls\sum_{k=1}^\fz\int_0^{2^{k+1}r}\!\!\!\int_{d(y,\,x_0)<2^{k+1}r}
\frac{rt^\lz}{(2^k r)^{\lz+1}}\Big(\frac{2^k r}{t}\Big)^{2n}k^2
\frac {d\mu(y)}{V_t(y)}\frac
{dt}{t}\\
&\hs+\sum_{k=1}^\fz\int_0^{2^{k+1}r}\!\!\!\int_{d(y,\,x_0)<2^{k+1}r}
\frac{rt^\lz}{(2^k r)^{\lz+1}}\Big(\frac{t}{2^k r}\Big)^{2\gz}
k^2\frac {d\mu(y)}{V_t(y)}\frac {dt}{t}\ls1.
\end{align*}
Choose $\eta_3\in(0,\,1)$ such that $\eta_3(1+k_0)\dz_2<1$. It then
follows from $(Q)_{\rm ii}$, \eqref{e2.2}, \eqref{e3.18} and
$\lz\in(n,\,\fz)$ that
\begin{align*}
{\rm
G_2}&\ls\sum_{k=1}^\fz\int_0^{2^{k+1}r}\!\!\!\int_{d(y,\,x_0)<2^{k+1}r}
\frac{rt^\lz}{(2^k r)^{\lz+1}}\Big[\log\frac{\rho(x_0)}{r}\Big]^2
\Big(\frac{t}{\rho(y)}\Big)^{\eta_3\dz_2}\,\frac{d\mu(y)}{V_t(y)}\frac{dt}{t}\\
&\ls\sum_{k=1}^\fz\int_0^{2^{k+1}r}\frac{rt^\lz}{(2^k
r)^{\lz+1}}\Big[\log\frac{\rho(x_0)}{r}\Big]^2 \bigg[\Big(\frac{2^k
r}{\rho(x_0)}\Big)^{\eta_3\dz_2} +\Big(\frac{2^k
r}{\rho(x_0)}\Big)^{\eta_3(1+k_0)\dz_2}\bigg]\Big(\frac{2^k
r}{t}\Big)^n\frac{dt}{t}\ls1.
\end{align*}
Combining the estimates for ${\rm G_1}$ and ${\rm G_2}$ yields
\eqref{e3.33}, which completes the proof of Theorem \ref{t3.2}.
\end{proof}

As a consequence of Theorem \ref{t3.2}, we have the following
conclusion.
\begin{cor}\label{c3.2}
With the assumptions same as in Theorem \ref{t3.2}, then there
exists a positive constant $C$ such that for all
$f\in\bmo_{\rho}(\cx)$, $\gl(f)\in\blo_{\rho}(\cx)$ and
$\|\gl(f)\|_{\blo_{\rho}(\cx)} \le C\|f\|_{\bmo_{\rho}(\cx)}.$
\end{cor}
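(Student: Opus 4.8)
The plan is to derive Corollary \ref{c3.2} from Theorem \ref{t3.2} by a square-root argument that uses only the nonnegativity of $\gl(f)$ together with the Cauchy--Schwarz inequality; this is the analogue of the passage from Theorem \ref{t3.1} to Corollary \ref{c3.1}, and of the argument in \cite[Corollary 6.1]{yyz}. Write $g\equiv\gl(f)$, which is nonnegative by \eqref{e3.4}. By Theorem \ref{t3.2}, $g^2=[\gl(f)]^2\in\bloz$ with $\|g^2\|_\bloz\ls\|f\|^2_\bmoz$; in particular $g^2\in L^1_\loc(\cx)$, so $g=\sqrt{g^2}\in L^1_\loc(\cx)$. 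By the homogeneity of the norms we may normalize $\|f\|_\bmoz=1$, so that $\frac1{\mu(B)}\int_B g^2\,d\mu\ls1$ for every $B\in\cd$ and $\frac1{\mu(B)}\int_B[g^2-{\mathop\einf_B}g^2]\,d\mu\ls1$ for every ball $B\notin\cd$. It then suffices to verify the two defining conditions of $\bloz$ for $g$ itself.

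For a ball $B\in\cd$, I would simply invoke the Cauchy--Schwarz inequality,
\begin{equation*}
\frac1{\mu(B)}\int_B g(y)\,d\mu(y)\le\lf(\frac1{\mu(B)}\int_B[g(y)]^2\,d\mu(y)\r)^{1/2}\ls1,
\end{equation*}
which controls the part of the $\bloz$ norm of $g$ coming from balls in $\cd$.

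The main step is the case of a ball $B\notin\cd$, for which I must estimate $\frac1{\mu(B)}\int_B[g(y)-{\mathop\einf_B}g]\,d\mu(y)$. Setting $m\equiv{\mathop\einf_B}g\ge0$ and using that $t\mapsto t^2$ is increasing on $[0,\fz)$ to obtain ${\mathop\einf_B}g^2=m^2$, the nonnegativity of $g$ yields the pointwise bound
\begin{equation*}
\lf[g(y)-m\r]^2\le\lf[g(y)-m\r]\lf[g(y)+m\r]=[g(y)]^2-m^2=[g(y)]^2-{\mathop\einf_B}g^2
\end{equation*}
for $\mu$-almost every $y\in B$ (since there $0\le g(y)-m\le g(y)+m$). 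Combining this with the Cauchy--Schwarz inequality gives
\begin{align*}
\frac1{\mu(B)}\int_B\lf[g(y)-m\r]\,d\mu(y)
&\le\lf(\frac1{\mu(B)}\int_B\lf[g(y)-m\r]^2\,d\mu(y)\r)^{1/2}\\
&\le\lf(\frac1{\mu(B)}\int_B\lf\{[g(y)]^2-{\mathop\einf_B}g^2\r\}\,d\mu(y)\r)^{1/2}\ls1.
\end{align*}
Together with the estimate for balls in $\cd$, this shows $g=\gl(f)\in\bloz$ with $\|\gl(f)\|_\bloz\ls1=\|f\|_\bmoz$, and the general case follows by homogeneity.

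The step requiring the most care is thus the small-ball estimate, and its crux is the identity ${\mathop\einf_B}g^2=({\mathop\einf_B}g)^2$ together with the ensuing pointwise inequality; both rest entirely on $g=\gl(f)\ge0$. Beyond what Theorem \ref{t3.2} already supplies, no regularity of the kernels and no annular decay property of $\cx$ are needed, so I do not expect any genuine analytic obstacle here.
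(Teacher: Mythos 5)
Your argument is correct and is precisely the square-root deduction the paper intends (it omits the details, pointing to the analogous \cite[Corollary 6.1]{yyz}): Cauchy--Schwarz on balls $B\in\cd$, and on balls $B\notin\cd$ the identity ${\mathop\einf_B}[\gl(f)]^2=({\mathop\einf_B}\gl(f))^2$ together with $[g-m]^2\le g^2-m^2$ for nonnegative $g$. No issues.
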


\begin{rem}\label{r3.2}\rm
(i) In Theorem \ref{t3.2} and Corollary \ref{c3.2}, if we replace
the assumption that the Littlewood-Paley $g$-function in
\eqref{e3.2} is bounded on $L^2(\cx)$ by that the $\gl$ function
$\gl(f)$ in \eqref{e3.4} is bounded on $L^2(\cx)$, then Theorem
\ref{t3.2} and Corollary \ref{c3.2} still hold.

(ii) Comparing with the classical known result in \cite{my08}, it is
still unclear if $\lz\in(n,\,\fz)$ is enough to guarantee Theorem
\ref{t3.2} and Corollary \ref{c3.2}. In the proof of Theorem
\ref{t3.2}, we need the assumption $\lz>3n$ only in the estimates of
${\rm E_1}(x)$ and ${\rm G_1}$. In \cite{my08}, this can be reduced
to $\lz>n$ via the fractional integral. However, in the current
setting, corresponding result of the fractional integral is not
available.

(iii) Let $\cx=(\rd,\,|\cdot|,\,dx)$ and $\{Q_t\}_{t>0}$ be the
operators associated to the semigroups generated by the
Schr\"odinger operator with nonnegative potential satisfying the
reverse H\"older inequality on $\rd$; see Proposition \ref{p3.1}
below. Then, Theorem \ref{t3.2} implies that the $\gl$ function
$\gl(f)$ associate to the kernels $\{Q_t\}_{t>0}$ is bounded from
$\mathrm{BMO}_\rho(\rd)$ to $\mathrm{BLO}_\rho(\rd)$ for
$\lz\in(3d,\,\fz)$, which improves the result in \cite{hl2} that
$\gl(f)$ is bounded on $\mathrm{BMO}_\rho(\rd)$ for
$\lz\in(3d+4k_0,\,\fz)$, where $k_0$ is as in \eqref{e2.2}.
\end{rem}

Notice that Buckley \cite{b99} showed that Heisenberg groups and
connected and simply connected nilpotent Lie groups with a
Carnot-Carath\'eodory (control) distance have the $\dz$-annular
decay property (see also Example \ref{example4.1}
below). By this fact, we have the following simple corollary
of Theorems \ref{t3.1} and \ref{t3.2}, and Corollaries \ref{c3.1}
and \ref{c3.2}. We omit the details here; see \cite[Section 7]{yyz}.

\begin{prop}\label{p3.1} Theorems \ref{t3.1} and \ref{t3.2}, and
Corollaries \ref{c3.1} and \ref{c3.2} are true if
$$
Q_t\equiv t^2\frac{de^{-s\mathcal {L}}}{ds}\bigg|_{s=t^2},
$$
where $\mathcal{L}=-\Delta+V$ is the Schr\"odinger operator or the
degenerate Schr\"odinger operator on ${{\mathbb R}}^d$, or the
sub-Laplace Schr\"odinger operator on Heisenberg groups or connected
and simply connected nilpotent Lie groups, and $V$ is a nonnegative
function satisfying certain reverse H\"older inequality; see the
details in \cite[Section 7]{yyz}.
\end{prop}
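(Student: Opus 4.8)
The plan is to recognize that Proposition \ref{p3.1} is not a new theorem but a verification statement: it asserts that the concrete family $Q_t = t^2\frac{de^{-s\mathcal{L}}}{ds}\big|_{s=t^2}$ falls inside the abstract framework set up for Theorems \ref{t3.1} and \ref{t3.2}. Accordingly, I would not reprove those theorems or their corollaries; instead I would check that every structural hypothesis they invoke holds for this particular $Q_t$ and for the listed ambient spaces, and then the four conclusions follow word for word by applying Theorems \ref{t3.1}, \ref{t3.2} and Corollaries \ref{c3.1}, \ref{c3.2}. Concretely, the items to be verified are: admissibility of the auxiliary function $\rho$; the $\dz$-annular decay property of $\cx$ (needed only for the Lusin-area statements); the kernel estimates $(Q)_{\rm i}$ and $(Q)_{\rm ii}$; and the $L^2(\cx)$-boundedness of the associated $g$-function.

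First I would fix $\rho$ to be the auxiliary function \eqref{e2.3} determined by the potential $V$. Since in each of the listed settings $V$ belongs to a suitable reverse H\"older class $\cb_q(\cx)$ with $q>\max\{1,n/2\}$ and the measure $V\,d\mu$ enjoys the doubling property, \cite[Proposition 2.1]{yz08} guarantees that $\rho$ satisfies \eqref{e2.2}, hence is admissible; this provides the constants $C_0,k_0$ used throughout Section \ref{s3}. Next, for the Lusin-area conclusions (Theorem \ref{t3.1} and Corollary \ref{c3.1}) I would supply the $\dz$-annular decay property: on $\rd$ with the Lebesgue measure it is elementary, while for Heisenberg groups and connected, simply connected nilpotent Lie groups equipped with a Carnot--Carath\'eodory distance it is exactly Buckley's result \cite{b99} (see also Example \ref{example4.1}). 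For the $\gl$ conclusions (Theorem \ref{t3.2} and Corollary \ref{c3.2}) this step may be skipped, in keeping with the stated dichotomy between the two functions.

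The remaining, and genuinely technical, task is to establish $(Q)_{\rm i}$, $(Q)_{\rm ii}$ and the $L^2(\cx)$-boundedness of $g$; I expect the kernel estimates to be the main obstacle. The $L^2$-bound is the softest part: because $\mathcal{L}$ is nonnegative and self-adjoint, $Q_t$ is a bounded Borel function of $\mathcal{L}$, so the square function $g$ is controlled by the spectral (Littlewood--Paley) functional calculus, yielding $\|g(f)\|_{L^2(\cx)}\ls\|f\|_{L^2(\cx)}$. For $(Q)_{\rm i}$ one starts from the Gaussian-type off-diagonal upper bound for $e^{-s\mathcal{L}}$ and its $s$-derivative; since such exponential decay dominates any polynomial, it produces the factor $(t/(t+d(x,y)))^\gz$ for arbitrary $\gz$, normalized against $V_t(x)+V(x,y)$ via $s=t^2$. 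The delicate ingredients are the localizing factor $(\rho(x)/(t+\rho(x)))^{\dz_1}$ in $(Q)_{\rm i}$ and the cancellation bound $(\rho/(t+\rho))^{\dz_2}$ in $(Q)_{\rm ii}$: these encode the influence of the potential and are obtained from Shen-type estimates for $\cb_q$ potentials, the reverse H\"older inequality being used to control $\int_B V$ in terms of $\rho$ through \eqref{e2.3}. I would carry out these verifications exactly as in \cite[Section 7]{yyz}, to which the routine computations are deferred; once $(Q)_{\rm i}$, $(Q)_{\rm ii}$ and the $L^2$-bound are in hand, Theorems \ref{t3.1} and \ref{t3.2} together with Corollaries \ref{c3.1} and \ref{c3.2} apply directly and give the proposition.
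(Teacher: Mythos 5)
Your proposal is correct and follows essentially the same route as the paper: the paper's own treatment consists of noting that $\rd$, Heisenberg groups and nilpotent Lie groups with a Carnot--Carath\'eodory distance have the $\dz$-annular decay property (Buckley, cf.\ Example \ref{example4.1}), and then deferring the verification of the admissibility of $\rho$, the kernel conditions $(Q)_{\rm i}$--$(Q)_{\rm ii}$ and the $L^2(\cx)$-boundedness of $g$ to \cite[Section 7]{yyz}, after which Theorems \ref{t3.1}--\ref{t3.2} and Corollaries \ref{c3.1}--\ref{c3.2} apply directly. Your write-up merely makes explicit the checklist that the paper leaves implicit.
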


\section{Several remarks on the $\dz$-annular decay property\label{s4}}

\hskip\parindent To the best of our knowledge, the $\dz$-annular
decay property in Definition \ref{d3.1} was introduced by Buckley
\cite{b99} in 1999. However, if $(\cx, d, \mu)$ is a normal space of
homogeneous type in the sense of Marc\'ias and Segovia \cite{ms79},
the $\dz$-annular decay property was introduced by David, Journ\'e
and Semmes in 1985 in their celebrated paper on the $T(b)$ theorem
(see \cite[p.\,41]{djs85}). A slight variant on manifolds also
appeared in Colding and Minicozzi II \cite{cm98} in 1998, which was
called $\ez$-volume regularity property therein (see
\cite[p.\,125]{cm98}).
Buckley \cite{b99} proved that for any metric space equipped with a
doubling measure, the chain ball property implies the
$\delta$-annular decay property for some $\dz\in (0,1]$.

In this section, we first introduce two properties on any metric space, the
weak geodesic property and the monotone geodesic property, which are
proved to be respectively equivalent to the chain ball property
introduced by Buckley \cite{b99}. As an application, we prove that
any length space equipped with a doubling measure has the weak geodesic
property and hence the $\dz$-annular decay property for some $\dz\in
(0,1]$. Finally, we give several examples of doubling
metric measure spaces having the $\dz$-annular decay property.

We begin with the notions of the weak geodesic property,
the monotone geodesic property, and the chain ball property.

\begin{defn}\label{d4.1}\rm
Let $(\cx,d)$ be a metric space.

(I) $(\cx,\, d)$ is said to have the {\it weak geodesic property}
(or called {\it Property $(\wz{M})$}) if there exists a positive constant $C_3$ such that for
all $x\in\cx$, $r,\,s\in(0,\,\fz)$ and $y\in \ol{B(x,\,r+s)}$,
$d(y,\,\ol{B(x,\,r)})\le C_3s$.

(II) $(\cx,\, d)$ is said to have {\it the monotone geodesic
property} if there exists a positive constant $C_4$ such that for
all $s>0$ and $x,\,y\in\cx$ with $d(x,\,y)\geq s$, there exists a
finite chain $x_0\equiv y,\,x_1,\,\cdots,\,x_m\equiv x$ with
$m\in\nn$ such that for $0\le i<m$, $d(x_i,\,x_{i+1})\le C_4s$ and
$d(x_{i+1},\,x)\le d(x_i,\,x)-s.$

(III) Let $\az,\bz\in(1,\,\fz)$. A ball $B\equiv B(z,r)\subset \cx$ is
said to be an {\it $(\az,\bz)$-chain ball}, with respect to a
``central" sub-ball $B_0\equiv B(z_0,r_0)\subset B$ if, for every $x\in B$,
there is an integer $k\equiv k(x)\ge0$ and a chain of balls,
$B_{x,i}\equiv B(z_{x,i}, r_{x,i})$, $0\le i\le k$, with the following
properties:

(i) $B_{x,0}=B_0$ and $x\in B_{x,k}$,

(ii) $B_{x,i}\cap B_{x,i+1}$ is non-empty, $0\le i<k$,

(iii) $x\in \az B_{x,i}$, $0\le i\le k$,

(iv) $\bz r_{x,i}\le r-d(z_{x,i},z)$, $0\le i\le k$. \\
The metric space $(\cx,\,d)$ is said to have the {\it $(\az,\bz)$-chain ball property}
if every ball in $\cx$ is an $(\az,\bz)$-chain ball.
\end{defn}

\begin{rem}\label{r4.1}\rm
(i) Tessera in \cite{te} introduced the following Property (M). A
metric space $(\cx,\,d)$ is said to has {\it Property (M)} if there
exists a positive constant $C$ such that the Hausdorff distance
between any pair of balls with same center and any radii between $r$
and $r+1$ is less than $C$. In other words, there exists a positive
constant $C$ such that for all $x\in\cx$, $r>0$ and
$y\in\ol{B(x,\,r+1)}$, $d(y,\,\ol{B(x,\,r)})\le C$; see
\cite[Definition 1]{te}. Obviously, if $(\cx,\,d)$ has Property
$(\wz M)$, then $(\cx,\,d)$ also has Property $(M)$.

Conversely, let $\zz$ be equipped with the usual Euclidean distance
$|\cdot|$. Then $(\zz,\,|\cdot|)$ has Property $(M)$. Assume that
$(\zz,\,|\cdot|)$ has also Property $(\wz M)$. Then, by Definition
\ref{d4.1}(I), there exists a positive constant $C_3$ such that for
all $r,\,s\in(0,\,\fz)$ and $y\in \ol{B(0,\,r+s)}$,
$d(y,\,\ol{B(0,\,r)})\le C_3s$. If we choose $s<\min\{1,
(C_3)^{-1}\}$ and $r\in(0,1)$ with $r+s\ge 1$, then $C_3s<1$,
$\ol{B(0,\,r)}=\{0\}$ and $\ol{B(0,\,r+s)}=\{0,\,1\}$, it then
follows that $1=d(1,\,\ol{B(0,\,r)})\le C_3s<1$, which is a
contradiction. Thus, $(\zz,\,|\cdot|)$ does not have Property $(\wz
M)$. In this sense, we say that Property $(\wz M)$ is slightly
stronger than Property $(M)$.

(ii) Let $(\cx,\,\mu,\,d)$ be a doubling measure space having
Property $(M)$. Then using 3) of Proposition 2 in \cite{te}, by an
argument same as in the proof of Theorem 4 of \cite{te} (see also
the proof of Lemma 3.3 of Colding and Minicozzi II \cite{cm98}), we
have that there exist positive constants $\dz$ and $C$ such that for
all $x\in\cx$, $s\in[1,\fz)$ and $r\in(s,\,\fz)$,
\begin{align*}
\mu(B(x,\,r+s))-\mu(B(x,\,r))\le
C\lf(\frac{s}{r}\r)^\dz\mu(B(x,\,r)).
\end{align*}
Thus, when $\dz\in(0,\,1]$, $(\cx,\,\mu,\,d)$ satisfies a slightly
weaker property than the $\dz$-annular decay property.

Tessera in \cite[pp.\,51-52]{te} also verified that the assumptions
of Theorem 4 in \cite{te} are optimal. Thus, in some sense, it is
necessary to introduce the weak geodesic property to guarantee the
$\dz$-annular decay property.

(iii) It is easy to check that $C_4\ge1$. In fact, if $m=1$, that
is, $x_0\equiv y$ and $x_1\equiv x$, then $s\le
d(x,\,y)=d(x_0,\,x_1)\le C_4s$, which implies that $C_4\ge1$; if
$m>1$, that is, $x_0\equiv y,\,x_1,\,\cdots,\,x_m\equiv x$, then
$d(y,\,x_1)=d(x_0,\,x_1)\le C_4s$ and $d(x_1,\,x)\le
d(x_0,\,x)-s=d(y,\,x)-s$, which also implies that
$s=d(x,\,y)-(d(x,\,y)-s)\le d(x,\,y)-d(x_1,\,x)\le d(y,\,x_1)\le
C_4s$ and hence $C_4\ge1$.

(iv) The notion of $(\az,\bz)$-chain ball property in Definition
\ref{d4.1}(III) was first introduced by Buckley in \cite{b99}.
Moreover, it is easy to see that in Definition \ref{d4.1}(III),
$B_{x,\,i}\subset B$ for all $x\in B$ and $i\in\{0,\,\cdots,\,k\}$.
In fact, by (iv) of Definition \ref{d4.1}(III) and the fact that
$\bz\in(1,\,\fz)$, we have that for any $w\in B_{x,\,i}$,
$d(w,\,z)\le d(w,\, z_{x,\,i})+d(z_{x,\,i},\,z)<
r_{x,\,i}+d(z_{x,\,i},\,z)<\bz r_{x,\,i}+d(z_{x,\,i},\,z)\le r$,
which implies that $B_{x,\,i}\subset B$.
\end{rem}

The main result of this section is the following equivalences of the
above three properties.

\begin{thm}\label{t4.1}
Let $(\cx,\,d)$ be a metric space. Then the following are
equivalent:

{\rm (I)} $(\cx,\,d)$ has the weak geodesic property;

{\rm (II)} $(\cx,\,d)$ has the monotone geodesic property;

{\rm (III)} $(\cx,\,d)$ has the $(\az,\,\bz)$-chain ball property
for some $\az,\,\bz\in(1,\,\fz)$.
\end{thm}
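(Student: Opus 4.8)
The plan is to prove the three conditions equivalent through the cycle $\mathrm{(I)}\Rightarrow\mathrm{(II)}\Rightarrow\mathrm{(III)}\Rightarrow\mathrm{(I)}$, handling the equivalence of the two geodesic properties first (this part is elementary and self-contained) and reserving the more delicate ball-chain bookkeeping of Buckley \cite{b99} for the two implications that pass through $\mathrm{(III)}$.

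To prove $\mathrm{(I)}\Rightarrow\mathrm{(II)}$ I would walk greedily toward $x$. Enlarging the constant if necessary, assume $C_3\ge1$ and set $C_4\equiv2C_3+1$. Given $x,\,y\in\cx$ with $d(x,\,y)\ge s$, put $x_0\equiv y$; as long as $d(x_i,\,x)>C_4 s$, apply the weak geodesic property with center $x$, radius $d(x_i,\,x)-s$ and increment $s$, obtaining $x_{i+1}\in\ol{B(x,\,d(x_i,\,x)-s)}$ with $d(x_i,\,x_{i+1})\le2C_3 s\le C_4 s$ (choosing a near-optimal point). Then $d(x_{i+1},\,x)\le d(x_i,\,x)-s$, and, crucially, the triangle inequality yields $d(x_{i+1},\,x)\ge d(x_i,\,x)-2C_3 s>s$, so the walk never lands in the forbidden range $(0,\,s)$. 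Since each step lowers $d(\cdot,\,x)$ by at least $s$, after finitely many steps the current point satisfies $s\le d(x_i,\,x)\le C_4 s$, and I close the chain by setting $x_{i+1}\equiv x$, which satisfies both requirements. The converse $\mathrm{(II)}\Rightarrow\mathrm{(I)}$ is immediate: for $y\in\ol{B(x,\,r+s)}$ with $d(x,\,y)>r$, run the monotone chain from $y$ to $x$ at the scale $s'\equiv d(x,\,y)-r\le s$; its first point $x_1$ satisfies $d(x_1,\,x)\le d(y,\,x)-s'=r$ and $d(y,\,x_1)\le C_4 s'\le C_4 s$, whence $d(y,\,\ol{B(x,\,r)})\le C_4 s$. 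Thus $\mathrm{(I)}\Leftrightarrow\mathrm{(II)}$ with comparable constants.

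For $\mathrm{(II)}\Rightarrow\mathrm{(III)}$ I would build Buckley's chains of balls along monotone chains aimed at the center. Fix $B\equiv B(z,\,r)$ and take the central sub-ball $B_0$ concentric with $B$ of radius comparable to $r$, so that $\az B_0\supset B$ and condition (iii) holds trivially at $i=0$. For $x\in B$ the natural scale is $s\equiv c\,(r-d(x,\,z))$, proportional to the distance from $x$ to $B^\complement$; applying the monotone geodesic property to the pair $(z,\,x)$ at this scale produces points $x=w_0,\,w_1,\,\dots,\,w_M=z$ with $d(w_j,\,z)$ decreasing by at least $s$ and $d(w_j,\,w_{j+1})\le C_4 s$. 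Reversing this chain and centering the balls $B_{x,i}$ at the $w$'s with radii proportional to $r-d(w_{M-i},\,z)$ makes (iv) hold by construction; (i) and (ii) follow from the overlap forced by the step bound once $c$ is small relative to $C_4,\,\az,\,\bz$; and (iii) follows by summing the step bounds to control $d(x,\,w_{M-i})$ against $\az$ times the radius. For the remaining implication $\mathrm{(III)}\Rightarrow\mathrm{(I)}$, given $y\in\ol{B(x,\,r+s)}$ I would apply the chain ball property to $B\equiv B(x,\,\rho)$ with $\rho$ slightly larger than $r+s$ and target $y$, and then extract from the overlapping chain $B_{y,0},\dots,B_{y,k}\ni y$ a single overlap point $p$ realizing the weak geodesic estimate: condition (iii) bounds $d(p,\,y)$ by a multiple of the radius of the relevant chain ball, while condition (iv) bounds $d(p,\,x)$ by $\rho$ minus a multiple of that radius, and a suitably large choice of $C_3$ renders these two requirements compatible at a fat ball of the chain (reducing at once to the trivial case $p\equiv x$ when $r\ls s$).

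The clean part is $\mathrm{(I)}\Leftrightarrow\mathrm{(II)}$, where the only subtlety, the possible overshoot, is removed by the triangle-inequality lower bound above. I expect the real work to lie in the two implications through $\mathrm{(III)}$: the main obstacle is the simultaneous verification of Buckley's four conditions, and in particular the calibration of the scale $s$ to the distance from the relevant point to the boundary of the ambient ball, together with the choice of a central sub-ball of radius comparable to $r$, so that the dilation condition (iii) and the boundary-radius condition (iv) are met at once. Once this calibration is in place, conditions (i), (ii) and the geodesic estimates reduce to routine applications of the step bounds and the triangle inequality.
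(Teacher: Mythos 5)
Your cycle $\mathrm{(I)}\Rightarrow\mathrm{(II)}\Rightarrow\mathrm{(III)}\Rightarrow\mathrm{(I)}$ is structurally sound, and two of the three legs are essentially fine. The equivalence $\mathrm{(I)}\Leftrightarrow\mathrm{(II)}$ is correct and in fact more explicit than the paper, which only refers to the analogue of Tessera's Proposition 2; your greedy walk with the overshoot control $d(x_{i+1},x)\ge d(x_i,x)-2C_3s>s$ is exactly the point that needs checking. Your $\mathrm{(II)}\Rightarrow\mathrm{(III)}$ takes a genuinely different route from the paper: you run a single monotone chain from $x$ to $z$ at the fixed scale $s=c\,(r-d(x,z))$ and attach to each node $w_j$ a ball of radius proportional to $r-d(w_j,z)$, whereas the paper iterates, keeping only the \emph{first} point of each monotone chain and rescaling so that the radii $t_j$ grow geometrically. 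Your version does close, because $d(x,w_j)\le jC_4s$ while $r-d(w_j,z)\ge(1/c+j)s$, so condition (iii) holds once $\az$ is taken of order $C_4\bz$; this is legitimate since (III) only asks for \emph{some} $\az,\bz$, and it is arguably simpler than the paper's geometric iteration.

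The gap is in $\mathrm{(III)}\Rightarrow\mathrm{(I)}$, which the paper proves by contradiction rather than directly. As you describe it, you need a chain ball $B_{y,i}=B(z_{y,i},r_{y,i})$ whose radius lies in a window: large enough that (iv) forces $d(p,x)\le\rho-(\bz-1)r_{y,i}\le r$, i.e.\ $r_{y,i}\ge(\rho-r)/(\bz-1)$, and small enough that (iii) gives $d(p,y)<(1+\az)r_{y,i}\le C_3s$. Nothing in the chain ball property controls how the radii vary along the chain, so they can jump over any fixed window no matter how large $C_3$ is chosen (e.g.\ a two-ball chain consisting of a ball of radius comparable to $\rho$ followed immediately by one of radius much smaller than $s$ containing $y$). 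The repair is to use the two radii available at an overlap point asymmetrically. The last ball is automatically ``thin'': $y\in B_{y,k}$ and (iv) give $d(y,x)<\rho-(\bz-1)r_{y,k}$, which together with $d(y,x)>r$ forces $(\bz-1)r_{y,k}<\rho-r$. Hence either every ball of the chain is thin --- in which case $x\in\az B_{y,0}$ (apply (iii) to the point $x\in B$ itself, whose chain starts at the same central ball) and $y\in\az B_{y,0}$ yield $d(x,y)<2\az r_{y,0}\ls s$ directly --- or there is a last fat ball $B_{y,i^*}$ followed by a thin one, and any $p\in B_{y,i^*}\cap B_{y,i^*+1}$ satisfies $d(p,x)\le\rho-(\bz-1)r_{y,i^*}\le r$ via the fat radius and $d(p,y)<(1+\az)r_{y,i^*+1}\ls s$ via the thin one. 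Without this transition-index argument (or the paper's contrapositive route) the third leg does not close as written.
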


\begin{proof}
Similarly to the proof of \cite[Proposition 2]{te}, we can show the
equivalence of (I) and (II). We omit the details.

Now we prove that (II) implies (III). To this end, let $(\cx,\,d)$
be a metric space having the monotone geodesic property with a
positive constant $C_4$, and let $B\equiv B(z,r)$ be any ball in
$\cx$. We show that $B$ is a $(4C_4/3,4/3)$-chain ball with respect
to the``central" sub-ball $B_0\equiv B(z,3r/4)\subset B$.

For every $x\in B$, let $t_0\equiv(r-d(x,z))/2$. If $x\in B_0$, then
$k\equiv k(x)\equiv 0$ and $\{B_0\}$ is a desired chain.

Assume that $x\notin B_0$, then  $d(x,\,z)\ge 3r/4$ and $t_0\le
r/8$. Thus, $d(x,z)\ge 6t_0>t_0/C_4$, since $C_4\ge 1$ by Remark
\ref{r4.1}(iii). Since $\cx$ has the monotone geodesic property, by
Definition \ref{d4.1}(II), there exists a finite chain
$x_{0,0}\equiv x,\,x_{0,1},\,\cdots,\,x_{0,m_0}\equiv z$ with
$m_0\in\nn$ such that for $0\le i<m_0$, $d(x_{0,i},\,x_{0,i+1})\le
t_0$ and $d(x_{0,i+1},\,z)\le d(x_{0,i},\,z)-t_0/C_4.$ In this case,
$B(x_{0,0},3t_0/2)=B(x,3t_0/2)\ni x_{0,1}$ and
$(4/3)\times3t_0/2=r-d(x,z).$ If $x_{0,1}\in B_0$, then $k\equiv
k(x)\equiv 1$ and $\{B_0,B(x,3t_0/2)\}$ is a desired chain, since
$x\in(4C_4/3)B_0$.

Assume that $x_{0,1}\notin B_0$ and let
$t_1\equiv(r-d(x_{0,1},z))/2$, then  $d(x_{0,\,1},\,z)\ge 3r/4$ and
$t_1\le r/8$. Thus, $d(x_{0,1},z)\ge 6t_1> t_1/C_4$, by $C_4\ge1$.
By Definition \ref{d4.1}(II), there exists a finite chain
$x_{1,0}\equiv x_{0,1},\,x_{1,1},\,\cdots,\,x_{1,m_1}\equiv z$ with
$m_1\in\nn$ such that for $0\le i<m_1$, $d(x_{1,i},\,x_{1,i+1})\le
t_1$ and $d(x_{1,i+1},\,z)\le d(x_{1,i},\,z)-t_1/C_4.$ In this case,
$B(x_{1,0},3t_1/2)=B(x_{0,1},3t_1/2)\ni x_{1,1}$ and
$(4/3)\times3t_1/2=r-d(x_{0,1},z).$ Moreover, $t_0\le
t_1(2C_4)/(1+2C_4)$, since
$$
  t_1-t_0=(r-d(x_{0,1},z))/2-(r-d(x,z))/2=(d(x,z)-d(x_{0,1},z))/2\ge t_0/(2C_4).
$$
Then, $d(x,x_{0,1})\le t_0\le
t_1(2C_4)/(1+2C_4)<2C_4t_1=(4C_4/3)\times(3t_1/2)$, that is,
$$
  x\in (4C_4/3)B(x_{0,1},3t_1/2).
$$
If $x_{1,1}\in B_0$, then $k\equiv k(x)\equiv 2$ and
$\{B_0,B(x_{0,1},3t_1/2),B(x,3t_0/2)\}$ is a desired chain.

Assume that $x_{j,1}\notin B_0$ and let
$t_{j+1}\equiv(r-d(x_{j,1},z))/2$, then $d(x_{j,\,1},\,z)\ge 3r/4$
and $t_{j+1}\le r/8$. Thus, $d(x_{j,1},z)\ge 6t_{j+1}>t_{j+1}/C_4$,
by $C_4\ge 1$. By Definition \ref{d4.1}(II), there exists a finite
chain $x_{j+1,0}\equiv
x_{j,1},\,x_{j+1,1},\,\cdots,\,x_{j+1,m_{j+1}}\equiv z$ with
$m_{j+1}\in\nn$ such that for $0\le i<m_{j+1}$,
$d(x_{j+1,i},\,x_{j+1,i+1})\le t_{j+1}$ and $d(x_{j+1,i+1},\,z)\le
d(x_{j+1,i},\,z)-t_{j+1}/C_4.$ In this case,
$$
B(x_{j+1,0},3t_{j+1}/2)=B(x_{j,1},3t_{j+1}/2)\ni x_{j+1,1}\quad{\rm
and}\quad (4/3)\times3t_{j+1}/2=r-d(x_{j,1},z).
$$
Moreover, $t_j\le
t_{j+1}(2C_4)/(1+2C_4)$, since $x_{j-1,1}=x_{j,0}$ and
$$
t_{j+1}-t_j=(r-d(x_{j,1},z))/2-(r-d(x_{j-1,1},z))/2
=(d(x_{j,0},z)-d(x_{j,1},z))/2\ge t_j/(2C_4).
$$
Then,
\begin{align*}
  d(x,x_{j,1})
  &\le d(x,x_{0,1})+\sum_{\ell=1}^{j}d(x_{\ell-1,1},x_{\ell,1}) \\
  &= d(x_{0,0},x_{0,1})+\sum_{\ell=1}^{j}d(x_{\ell,0},x_{\ell,1}) \\
  &\le \sum_{\ell=0}^{j}t_{\ell}
  \le \sum_{\ell=1}^{j+1}t_{j+1}((2C_4)/(1+2C_4))^{\ell} \\
  &<2C_4t_{j+1}=(4C_4/3)\times(3t_{j+1}/2),
\end{align*}
that is, $x\in (4C_4/3)B(x_{j,1},3t_{j+1}/2).$ If $x_{j+1,1}\in
B_0$, then $k\equiv k(x)\equiv j+2$ and
$$
  \{B_0,B(x_{j,1},3t_{j+1}/2),\cdots,B(x_{0,1},3t_1/2),B(x,3t_0/2)\}
$$
is a desired chain.

To finish the proof that (II) implies (III), we must show
$x_{j_0,1}\in B_0$ for some $j_0\in\nn\cup\{0\}$. To this end, it is
enough to show that
\begin{equation} \label{e4.1}
  t_j\ge\frac12\left(1+\frac1{2C_4}\right)^j[r-d(x,z)]
 \quad{\rm and}\quad
  d(x_{j,1},z)\le r-\left(1+\frac1{2C_4}\right)^{j+1}[r-d(x,z)],
\end{equation}
by induction. By the definitions of $t_0$ and $x_{0,1}$, we have
that $t_0=\frac12(r-d(x,z))$ and
\begin{align*}
  d(x_{0,1},z)&\le d(x_{0,0},z)-t_{0}/C_4=d(x,z)-t_{0}/C_4 \\
  &=d(x,z)-\frac1{2C_4}[r-d(x,z)] \\
  &=r-\left(1+\frac1{2C_4}\right)[r-d(x,z)].
\end{align*}
Then \eqref{e4.1} holds for $j=0$. Assume that \eqref{e4.1} holds
for $j\in\nn$ and we consider the case $j+1$. By the definitions of
$x_{j,1}$ and $t_j$, we have
\begin{align*}
  t_{j+1}
  &=\frac12(r-d(x_{j,1},z)) \\
  &\ge \frac12\left(r-\left\{r-\left(1+\frac1{2C_4}\right)^{j+1}[r-d(x,z)]\right\}\right), \\
  &=\frac12\left(1+\frac1{2C_4}\right)^{j+1}[r-d(x,z)]
\end{align*}
and
\begin{align*}
  d(x_{j+1,1},z)
  &\le d(x_{j+1,0},z)-t_{j+1}/C_4=d(x_{j,1},z)-t_{j+1}/C_4\\
  &\le r-\left(1+\frac1{2C_4}\right)^{j+1}[r-d(x,z)]
    - \frac1{2C_4}\left(1+\frac1{2C_4}\right)^{j+1}[r-d(x,z)] \\
  &=r-\left(1+\frac1{2C_4}\right)^{j+2}[r-d(x,z)].
\end{align*}
Thus, \eqref{e4.1} holds and (II) implies (III).

Finally, we prove that (III) implies (I). Assume that $(\cx,\,d)$
has the $(\az,\,\bz)$-chain ball property for some
$\az,\,\bz\in(1,\,\fz)$, but not the weak geodesic property, that
is, for all natural numbers $N$, there exist $x_N\in\cx$, $r_N,
s_N\in(0,\,\fz)$ and $y_N\in \ol{B(x_N,\,r_N+s_N)}$ such that
$d(y_N,\,\ol{B(x_N,\,r_N)})>Ns_N$. In this case, $y_N\notin
B(x_N,\,r_N)$ and $Ns_N<d(y_N,x_N)\le r_N+s_N$. Thus,
\begin{equation} \label{e4.2}
  (N-1)s_N<r_N.
\end{equation}

We show that, for all $\az,\bz\in(1,\,\fz)$, there exists $N\in\nn$
such that $B(x_N,\,r_N+2s_N)$ is not an $(\az,\bz)$-chain ball.
Otherwise, for some $\az,\,\bz\in(1,\,\fz)$ and for all $N\in\nn$, if
$B(x_N,\,r_N+2s_N)$ is an $(\az,\bz)$-chain ball with respect to
$B_{N,0}\equiv B(z_{N,0},t_{N,0})\subset B(x_N,\,r_N+2s_N)$, then
there exists an integer $k\equiv k(y_N)>0$ and a chain of balls,
$B_{N,i}\equiv B(z_{N,i},t_{N,i})\subset B(x_N,\,r_N+2s_N)$, $0\le
i\le k$, satisfy that

(i) $y_N\in B_{N,k}$,

(ii) $B_{N,i}\cap B_{N,i+1}$ is non-empty, $0\le i<k$,

(iii) $y_N\in B(z_{N,i},\az t_{N,i})$, $0\le i\le k$,

(iv) $\bz t_{N,i}\le r_N+2s_N-d(z_{N,i},x_N)$, $0\le i\le k$. \\
If $N$ satisfies $\bz-{4\az}/{(N-1)}>1$, then
\begin{equation} \label{e4.3}
  \bigcup_{0\le i\le k}B_{N,i}\subset B(x_N,\,r_N),
\end{equation}
that is, $y_N\notin B_{N,i}$ for all $0\le i\le k$, which is
contradicts to (i).

In the following we show \eqref{e4.3}. By (iii) of Definition
\ref{d4.1}(III), we have that $x_N\in B(z_{N,0},\az t_{N,0})$, which
together with  $y_N\notin B(x_N,\,r_N)$, \eqref{e4.2} and (iii)
leads to that
$$
  (N-1)s_N\le r_N\le d(y_N,x_N)\le d(y_N,z_{N,0})+d(z_{N,0},x_N)<2\az t_{N,0}.
$$
Hence, by (iv),
$$
  d(z_{N,0},x_N)\le r_N+2s_N-\bz t_{N,0}
  \le r_N+\left(\frac{4\az}{N-1}-\bz\right)t_{N,0}
  <r_N-t_{N,0},
$$
since $\bz-4\az/(N-1)>1$. Thus, $B(z_{N,0},t_{N,0})\subset
B(x_N,r_N)$, since, for $w\in B(z_{N,0},t_{N,0})$,
$$
  d(w,x_N)\le d(w,z_{N,0})+d(z_{N,0},x_N)<t_{N,0}+r_N-t_{N,0}=r_N.
$$
Assume that $B_{N,i}\subset B(x_N,r_N)$ for $i\in\nn$. We show
$B_{N,i+1}\subset B(x_N,r_N)$. From (ii) and (iii), it follows that
there exists $w\in \lf(B_{N,i}\cap B_{N,i+1}\r)\subset B(x_N,r_N)$
and
$$
  Ns_N<d(w,y_N)\le d(w,z_{N,i+1})+d(z_{N,i+1},y_N)\le (1+\alpha)t_{N,i+1}.
$$
Hence, by (iv),
$$
  d(z_{N,i+1},x_N)\le r_N+2s_N-\bz t_{N,i+1}
  \le r_N+\left(\frac{2(1+\az)}{N}-\bz\right)t_{N,i+1}
  <r_N-t_{N,i+1},
$$
since $\bz-2(1+\az)/N>\bz-4\az/(N-1)>1$. Thus,
$B(z_{N,i+1},t_{N,i+1})\subset B(x_N,r_N)$, which completes the
proof of \eqref{e4.3} and hence Theorem 4.1.
\end{proof}

As an application of the chain ball property, Buckley in \cite{b99}
proved the following useful result.

\begin{lem}[\cite{b99}]\label{l4.1}
Let $\cx=(\cx,\,d,\,\mu)$ be a doubling metric measure space with
doubling constat $C_1$. Suppose that $(\cx,\,d)$ also has the
$(\az,\bz)$-chain ball property for some $\az,\,\bz\in(1,\,\fz)$,
then $\mu$ has the $\dz$-annular decay property for some
$\dz\in(0,\,1]$ dependent only on $\az, \bz$ and $C_1$.
\end{lem}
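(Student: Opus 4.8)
The plan is to route the proof through the weak geodesic property, where the geometric content is most transparent, and to reduce everything to a single estimate for the outer shell. First, by Theorem \ref{t4.1} the $(\az,\bz)$-chain ball property is equivalent to the weak geodesic property of Definition \ref{d4.1}(I), so it suffices to show that a doubling space $(\cx,\,d,\,\mu)$ satisfying the weak geodesic property, with constant $C_3$, has the $\dz$-annular decay property for some $\dz\in(0,\,1]$ depending only on $C_3$ and the doubling constant $C_1$ (equivalently, on $\az,\,\bz$ and $C_1$). As already observed after Definition \ref{d3.1}, the case $r\le s$ of \eqref{e3.1} follows from the doubling property \eqref{e2.1}, so I would restrict to thin shells $0<s<r$, for which \eqref{e2.1} gives $\mu(B(x,\,r+s))\sim\mu(B(x,\,r))$. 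The whole task then reduces to proving
\[
\mu\big(B(x,\,r+s)\big)-\mu\big(B(x,\,r)\big)\ls\Big(\frac sr\Big)^{\dz}\mu\big(B(x,\,r)\big),\qquad 0<s<r,
\]
uniformly in $x$.

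The geometric heart is the following consequence of the weak geodesic property: every $y$ with $r<d(x,\,y)\le r+s$ admits a point $w\in\ol{B(x,\,r)}$ with $d(y,\,w)\le C_3s$, and then $d(x,\,w)>r-C_3s$, so $w$ lies in the thin inner layer $\ol{B(x,\,r)}\setminus B(x,\,r-C_3s)$. Thus the outer shell is contained in the $2C_3s$-neighborhood of this layer, and a Vitali covering of the layer by $C_3s$-separated balls together with \eqref{e2.1} compares the measure of the width-$s$ outer shell with that of the adjacent inner shell of comparable width. This is exactly the mechanism quantified in part 3) of Proposition 2 of Tessera \cite{te}, and from here I would follow the proof of Theorem 4 of \cite{te} (see also the proof of Lemma 3.3 of Colding and Minicozzi \cite{cm98}), whose adaptation to the present setting is sketched in Remark \ref{r4.1}(ii). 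The point of passing to the weak geodesic property (rather than Property $(M)$ of \cite{te}) is that Definition \ref{d4.1}(I) carries the full scaling in $s$, so the comparison holds at every pair of scales and produces an exponent $\dz\in(0,\,1]$ valid for all $s$, matching Definition \ref{d3.1}. As an alternative, one may of course argue directly from the chain ball property via the Whitney-type decomposition of Definition \ref{d4.1}(III), which is Buckley's own route in \cite{b99}.

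The step I expect to be the main obstacle is precisely the extraction of the power $(s/r)^{\dz}$ from the single-scale geometry. A comparison of the outer width-$s$ shell with its adjacent inner width-$s$ shell only yields a bounded ratio (in $\rd$ this ratio tends to $1$ as $s/r\to 0$), so no decay is visible at a single scale; the H\"older gain must instead be produced by combining these comparisons across the dyadic scales between $s$ and $r$ and balancing them against the averaged doubling control $\mu(B(x,\,2t))\le C_1\mu(B(x,\,t))$, which is what upgrades the scale-doubling bound on $t\mapsto\mu(B(x,\,t))$ to the pointwise-in-scale H\"older bound equivalent to \eqref{e3.1}. Carrying out this cross-scale summation uniformly in $x$ and in the pair $(r,\,s)$, while tracking every constant back to $C_3$ and $C_1$ so that $\dz$ depends only on $\az,\,\bz$ and $C_1$, is the delicate point; by contrast, the conversion via Theorem \ref{t4.1} and the covering step above are routine.
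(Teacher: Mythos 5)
Your proposal is sound, but it is a genuinely different route from the one the paper takes: the paper offers no proof of Lemma \ref{l4.1} at all, simply citing Buckley \cite{b99}, whose own argument works directly with the chain-ball (Whitney-type) decomposition of Definition \ref{d4.1}(III). You instead pass through Theorem \ref{t4.1} to replace the $(\az,\bz)$-chain ball property by the weak geodesic property and then run the Tessera/Colding--Minicozzi volume-comparison argument. This is precisely the alternative the paper itself gestures at in Remarks \ref{r4.1}(ii) and \ref{r4.2} (there phrased for Corollary \ref{c4.1}) but never writes out, and it is non-circular: Theorem \ref{t4.1} is purely metric and is proved independently of Lemma \ref{l4.1}, and your direct proof of ``weak geodesic $+$ doubling $\Rightarrow$ annular decay'' does not reuse the lemma. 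Your diagnosis of the delicate point is also the right one: the covering step only yields a single-scale inequality of the form $\mu(B(x,r+s))-\mu(B(x,r))\ls \mu(B(x,r))-\mu(B(x,r-\Lambda s))$, which must be rewritten as ``shell $\le\theta\cdot$(thickened shell)'' with $\theta<1$ and iterated inward along geometrically growing widths $(1+\Lambda)^k s$ until the inner radius reaches $\sim r/2$, at which point \eqref{e2.1} closes the loop and produces $\dz=\log(1/\theta)/\log(1+\Lambda)$; the trivial regimes $s\ge r$ and $s\gtrsim r/C_3$ are absorbed by doubling, exactly as you note. What your route buys is a self-contained proof with transparent geometry and explicit constants; what Buckley's route buys is that it never leaves the chain-ball formulation and so does not need Theorem \ref{t4.1} at all.

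One point worth making explicit, since the lemma asserts that $\dz$ depends only on $\az$, $\bz$ and $C_1$: your reduction requires that the weak geodesic constant $C_3$ be controlled by $\az$ and $\bz$. The proof of (III)$\Rightarrow$(I) in Theorem \ref{t4.1} is phrased as a contradiction, but it is really a quantitative contrapositive --- the contradiction is reached as soon as $N$ satisfies $\bz-4\az/(N-1)>1$ and $\bz-2(1+\az)/N>1$ --- so one may take $C_3$ to be any such $N$, e.g.\ an explicit function of $\az$ and $(\bz-1)^{-1}$. With that observation your chain of dependencies $\dz=\dz(C_3,C_1)=\dz(\az,\bz,C_1)$ is complete.
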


As a consequence of Theorem \ref{t4.1} and Lemma \ref{l4.1}, we have
the following conclusion.

\begin{cor}\label{c4.1}
Let $\cx=(\cx,\,d,\,\mu)$ be a doubling metric measure space. If
$(\cx,\,d)$ has either the weak geodesic property or the monotone geodesic
property, then $\mu$ has the $\dz$-annular decay
property for some $\dz\in(0,\,1]$.
\end{cor}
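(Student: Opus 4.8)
The plan is to obtain the corollary as a direct composition of the two results already established just above it, namely the equivalences in Theorem \ref{t4.1} and Buckley's Lemma \ref{l4.1}. The corollary asserts that either geodesic-type property forces the $\dz$-annular decay property of $\mu$, and the natural bridge between the two is precisely the $(\az,\,\bz)$-chain ball property, which appears as the common reformulation in both inputs.

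First I would invoke Theorem \ref{t4.1}. By the equivalence of (I), (II) and (III) proved there, the hypothesis that $(\cx,\,d)$ has either the weak geodesic property or the monotone geodesic property immediately yields that $(\cx,\,d)$ has the $(\az,\,\bz)$-chain ball property for some $\az,\,\bz\in(1,\,\fz)$. It is worth noting that Theorem \ref{t4.1} is purely metric and makes no reference to the measure $\mu$, so this first step applies verbatim under the stated hypotheses and costs nothing beyond citing the equivalence.

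Next I would feed this conclusion into Lemma \ref{l4.1}. Since by assumption $\cx=(\cx,\,d,\,\mu)$ is a doubling metric measure space and $(\cx,\,d)$ now carries the $(\az,\,\bz)$-chain ball property, Lemma \ref{l4.1} produces a $\dz\in(0,\,1]$ --- depending only on $\az,\,\bz$ and the doubling constant $C_1$ --- for which $\mu$ satisfies the $\dz$-annular decay property in the sense of Definition \ref{d3.1}. This closes the argument.

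Since the corollary is a formal consequence of two already-proved statements, there is no genuine obstacle; the only point that requires a moment's care is bookkeeping of hypotheses. Theorem \ref{t4.1} is stated for a bare metric space, whereas Lemma \ref{l4.1} additionally requires the doubling measure, so one must be careful to retain the doubling assumption on $\mu$ throughout the chaining and to track which properties are metric and which are measure-theoretic. As long as the doubling hypothesis is carried along, the composition of the two results goes through directly.
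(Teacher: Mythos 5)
Your proposal is correct and is exactly the paper's argument: the corollary is stated there as an immediate consequence of Theorem \ref{t4.1} (to pass from either geodesic property to the $(\az,\bz)$-chain ball property) followed by Lemma \ref{l4.1} (to pass from the chain ball property, together with the doubling of $\mu$, to the $\dz$-annular decay property). Your bookkeeping remark about which hypotheses are metric and which are measure-theoretic is accurate and matches the paper's implicit reasoning.
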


\begin{rem}\label{r4.2}\rm
By an argument similar to that used in the proof of \cite[Theorem
4]{te}, we can also directly prove Corollary \ref{c4.1}, without invoking
Lemma \ref{l4.1}. We omit the details.
\end{rem}

As an application of Corollary \ref{c4.1}, we show that any length
space equipped with a doubling measure has the $\dz$-annular decay
property, which is just \cite[Corollary 2.2]{b99}. However,
unlike the proof of \cite[Corollary 2.2]{b99}, we prove the
following Proposition \ref{p4.1} without invoking the property of John
domains. In what follows, for any rectifiable path $\gz$,
let $\ell(\gz)$ denote its length.

\begin{prop}\label{p4.1} Any length space $(\cx,\,d)$ has the weak
geodesic property. Moreover, if $\mu$ is a doubling measure
on $(\cx,\,d)$ with doubling constant $C_1$, then $\mu$ has the
$\dz$-annular decay property for some $\dz\in(0,\,1]$ dependent only
on $C_1$.
\end{prop}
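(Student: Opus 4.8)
The plan is to prove the weak geodesic property directly from the length structure, with the explicit constant $C_3=1$, and then to read off the $\dz$-annular decay property from Corollary \ref{c4.1}.

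To verify the weak geodesic property, I would fix $x\in\cx$, $r,\,s\in(0,\,\fz)$ and $y\in\ol{B(x,\,r+s)}$, so that $d(x,\,y)\le r+s$. If $d(x,\,y)\le r$, then $y\in\ol{B(x,\,r)}$ and hence $d(y,\,\ol{B(x,\,r)})=0\le s$, which is the desired estimate. Assume therefore that $d(x,\,y)>r$. Since $(\cx,\,d)$ is a length space, for every $\ez\in(0,\,\fz)$ there exists a rectifiable path joining $x$ and $y$ of length at most $d(x,\,y)+\ez\le r+s+\ez$; reparametrizing it by arc length produces a map $\gz:[0,\,L]\to\cx$ with $\gz(0)=x$, $\gz(L)=y$ and $L=\ell(\gz)\le r+s+\ez$.

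The key observation is that an arc-length parametrization is $1$-Lipschitz, namely $d(\gz(a),\,\gz(b))\le|a-b|$ for all $a,\,b\in[0,\,L]$; consequently $t\mapsto d(x,\,\gz(t))$ is continuous. As $d(x,\,\gz(0))=0<r<d(x,\,y)=d(x,\,\gz(L))$, the intermediate value theorem yields some $t_0\in(0,\,L)$ with $d(x,\,\gz(t_0))=r$. Setting $z\equiv\gz(t_0)$, we have $z\in\ol{B(x,\,r)}$. Moreover, since the restriction of $\gz$ to $[0,\,t_0]$ is a path from $x$ to $z$ of length $t_0$, we get $r=d(x,\,z)\le t_0$, so the restriction of $\gz$ to $[t_0,\,L]$ has length $L-t_0\le(r+s+\ez)-r=s+\ez$, whence $d(y,\,z)\le L-t_0\le s+\ez$. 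Thus $d(y,\,\ol{B(x,\,r)})\le s+\ez$, and letting $\ez\to0$ gives $d(y,\,\ol{B(x,\,r)})\le s$. This proves the weak geodesic property with $C_3=1$.

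Finally, assuming that $\mu$ is a doubling measure with doubling constant $C_1$, the weak geodesic property just established together with Corollary \ref{c4.1} immediately yields the $\dz$-annular decay property for some $\dz\in(0,\,1]$. Since $C_3=1$ is an absolute constant, tracking the dependence through Theorem \ref{t4.1} and Lemma \ref{l4.1} shows that $\dz$ depends only on $C_1$, as claimed. I do not anticipate any serious obstacle here: the one point demanding care is the existence of the arc-length reparametrization together with the continuity of $t\mapsto d(x,\,\gz(t))$ and the inequality $t_0\ge r$, which is exactly what converts an ``almost minimizing path'' into the linear control $d(y,\,\ol{B(x,\,r)})\le s$.
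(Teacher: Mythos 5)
Your proof is correct and follows essentially the same route as the paper: take an almost length-minimizing path from $x$ to $y$, use the intermediate value theorem on $w\mapsto d(x,w)$ along the path to locate $z$ with $d(x,z)=r$, bound $d(y,z)\le s+\ez$ by splitting the path, let $\ez\to0$, and then invoke Corollary \ref{c4.1}. The only cosmetic difference is that you estimate the tail via the arc-length parametrization ($d(y,z)\le L-t_0$ with $t_0\ge r$) while the paper uses $d(x,z)+d(z,y)\le\ell(\gz)<d(x,y)+\ez$; both yield the same constant $C_3=1$.
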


\begin{proof}
Let $x\in\cx$, $r,\,s\in(0,\,\fz)$ and $y\in \ol{B(x,\,r+s)}$. If
$d(x,\,y)\le r$, then $d(y, \ol{B(x,\,r)})=0\le s$.
If $r<d(x,\,y)\le r+s$, then for any given $\ez>0$,
there exists a rectifiable path $\gz$ from $x$ to $y$ such that
$\ell(\gz)<d(x,\,y)+\ez$. Moreover, by the mean value theorem
for the continuous function of $w\mapsto d(x,\,w)$
restricted to the path $\gz$, there exists a $z\in\gz$ such that
$d(x,\,z)=r$. By splitting the path
$\gz$ into $\gz_1$ from $x$ to $z$ and $\gz_2$ from $z$ to $y$, we
have by definition of the distance and choice of $\gz$ that
$d(x,\,z)+d(z,\,y)\le\ell(\gz_1)+\ell(\gz_2)=\ell(\gz)<d(x,\,y)+\ez$.
Thus, $d(y,\ol{B(x,\,r)})\le d(y,\,z)<d(x,\,y)+\ez-d(x,\,z)\le s+\ez$.
Letting $\ez\to0$ yields that $d(y,\,\ol{B(x,\,r)})\le s$, which shows that
$(\cx,\,d)$ has the weak
geodesic property. This combined with Corollary \ref{c4.1}
implies that $\mu$ has the
$\dz$-annular decay property for some $\dz\in(0,\,1]$ dependent only
on $C_1$, which completes the proof of Proposition \ref{p4.1}.
\end{proof}

\begin{rem}\label{r4.3}\rm In the proof of Proposition \ref{p4.1},
if $r<d(x,\,y)\le r+s$, then by \cite[p.\,42,\ Exercise 2.4.13]{bbi},
we also have $d(y, \ol{B(x,\,r)}\le d(y,B(x,\,r))
=d(x,\,y)-r\le s$, which is another proof of this fact.
\end{rem}

Now we give an equivalent characterization for the
$\dz$-annular decay property. First, we introduce the following
notion.

\begin{defn}\label{d4.2}\rm
Let $\tau\in[1,\,\fz)$. A doubling metric measure space
$(\cx,\,d,\,\mu)$ is said to have {\it Property $(P)_\tau$}, if
there exist positive constants $\dz$ and $C_{(P)_\tau}$ such that
for all $x\in\cx$, $s\in (0,\fz)$ and $r\in (\tau s, \fz)$,
\begin{equation}\label{e4.4}
\mu(B(x,\,r+s))-\mu(B(x,\,r))\le C_{(P)_\tau}\lf(\frac
sr\r)^{\dz}\mu(B(x,\,r)).
\end{equation}
\end{defn}

\begin{rem}\label{r4.4}\rm
(i) When $\tau=1$, it was proved in \cite[Remark 1.1]{mny10} that if
$\cx$ contains no less than two elements, then $\dz\in(0,\,1]$.
Hence, if $\cx$ contains no less than two elements, Property $(P)_1$
is just the $\dz$-annular decay property and we denote it simply by
Property $(P)$. Also, we denote the {\it corresponding constant
$C_{(P)_1}$} in \eqref{e4.4} by $C_P$.

(ii) Observe that if $r\in (0,\tau s]$, then \eqref{e4.4} is a
simple conclusion of the doubling property \eqref{e2.1} of $\mu$.
Moreover, if $r\in (0,\,s]$, then \eqref{e4.4} is always true, which
explains why we restrict that $\tau\in [1,\fz)$ in Definition
\ref{d4.2}.
\end{rem}

It is easy to show that Property $(P)_{\tau}$ with $\tau\in (1,\fz)$ is
equivalent to the $\dz$-annular decay property in the meaning as in
the following Proposition \ref{p4.2}. We omit the details.
In what follows, for any $a\in\rr$, we denote by $\lceil a\rceil$
the {\it smallest integer no less than $a$}.

\begin{prop}\label{p4.2} Let $\tau\in (1,\fz)$. Then

(i) Property $(P)$ implies Property $(P)_{\tau}$ with
$C_{(P)_\tau}\equiv C_P$.

(ii) Property $(P)_{\tau}$ implies Property $(P)$ with
$C_P\equiv(\lcz\tau\rcz)^{1-\dz}C_{(P)_\tau} C_1$, where $C_1$ is
the same as in Definition \ref{d2.1}.
\end{prop}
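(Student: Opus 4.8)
I would treat the two implications separately. \emph{Part (i)} is essentially a matter of comparing the ranges of the two hypotheses: because $\tau>1$, the admissible range $r\in(\tau s,\fz)$ for Property $(P)_\tau$ is strictly contained in the range $r\in(s,\fz)$ for Property $(P)$. Hence the inequality supplied by Property $(P)$ already holds on this smaller range, and Property $(P)_\tau$ follows verbatim with the same exponent $\dz$ and with $C_{(P)_\tau}\equiv C_P$; nothing further is needed.

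\emph{Part (ii)} is the substantive direction. The issue is that Property $(P)_\tau$ only controls an increment $s$ when the base radius satisfies $r>\tau s$, whereas Property $(P)$ must also cover increments comparable to the radius, namely $s<r\le\tau s$. My plan is to close this gap by subdividing the increment. Fix $x\in\cx$, $s\in(0,\fz)$ and $r\in(s,\fz)$, set $N\equiv\lceil\tau\rceil$, $\sigma\equiv s/N$, and $r_i\equiv r+i\sigma$ for $0\le i\le N$, so that $r_0=r$ and $r_N=r+s$. The key verification is that Property $(P)_\tau$ applies to every sub-annulus: since $N\ge\tau$ we have $\tau\sigma=\tau s/N\le s<r\le r_i$, so $r_i>\tau\sigma$ for each $i$, and $(P)_\tau$ bounds $\mu(B(x,r_{i+1}))-\mu(B(x,r_i))$ by $C_{(P)_\tau}(\sigma/r_i)^\dz\mu(B(x,r_i))$.

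I would then telescope the total increment as $\mu(B(x,r+s))-\mu(B(x,r))=\sum_{i=0}^{N-1}[\mu(B(x,r_{i+1}))-\mu(B(x,r_i))]$ and estimate each summand by the bound above. Using $r_i\ge r$ to replace $(\sigma/r_i)^\dz$ by $(\sigma/r)^\dz$, and using the doubling property together with $r_i\le r+s<2r$ (valid since $s<r$) to replace $\mu(B(x,r_i))$ by $C_1\mu(B(x,r))$, the $N$ terms contribute at most $N\,C_{(P)_\tau}(\sigma/r)^\dz C_1\,\mu(B(x,r))$. Since $N(\sigma/r)^\dz=N^{1-\dz}(s/r)^\dz$ with $\sigma=s/N$, this is exactly $(\lceil\tau\rceil)^{1-\dz}C_{(P)_\tau}C_1\,(s/r)^\dz\mu(B(x,r))$, giving Property $(P)$ with the asserted constant $C_P\equiv(\lceil\tau\rceil)^{1-\dz}C_{(P)_\tau}C_1$.

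The main obstacle is not analytic depth but the bookkeeping in (ii): one must choose the number of pieces so that simultaneously the base-radius condition $r_i>\tau\sigma$ holds at the very first (hence every) step, which forces $N\ge\tau$, and the accumulated constant collapses to the clean factor $N^{1-\dz}$; the minimal choice $N=\lceil\tau\rceil$ achieves both at once. I would also double-check that the doubling estimate $\mu(B(x,r_i))\le C_1\mu(B(x,r))$ is uniform in $i$, which rests only on the hypothesis $s<r$, and that the exponent $\dz$ is transported unchanged through the telescoping sum.
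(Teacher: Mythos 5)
Your proof is correct: part (i) is indeed just the inclusion of ranges $(\tau s,\fz)\subset(s,\fz)$, and in part (ii) the subdivision of the increment into $N=\lceil\tau\rceil$ equal pieces, the verification $r_i>\tau\sigma$, the telescoping, and the uniform doubling bound $\mu(B(x,r_i))\le C_1\mu(B(x,r))$ (from $r+s<2r$) all hold and yield exactly the constant $(\lceil\tau\rceil)^{1-\dz}C_{(P)_\tau}C_1$. The paper omits the proof entirely ("We omit the details"), but the precise form of the stated constant makes clear that your subdivision argument is the intended one.
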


Finally, we give several examples of doubling
metric measure spaces having the $\dz$-annular decay property.

\begin{example}\rm\label{example4.1}

(i) $(\rd,\,|\cdot|,\,dx)$, the $d$-dimensional Euclidean space
endowed with the Euclidean norm $|\cdot|$ and the Lebesgue measure
$dx$. It is easy to show that $(\rd,\,|\cdot|,\,dx)$ has the
$\dz$-annular decay property for all $\dz\in(0,\,1]$.

(ii) $(\rd,\,|\cdot|,\,w(x)dx)$, the $d$-dimensional Euclidean space
endowed with the Euclidean norm $|\cdot|$ and the measure $w(x)dx$,
where $w$ is an $A_\fz(\rd)$ weight (see \cite[p.\,401]{gf85}
for its definition) and $dx$ is the Lebesgue
measure. Let $w$ be an $A_\fz(\rd)$ weight and for any
Lebesgue measurable set $E$, let $w(E)\equiv\int_Ew(x)\,dx$. Then there
exist positive constants $C$ and $\dz\in(0,\,1]$ such that for all balls $B$
and measurable subsets $E$ of $B$,
$$\dfrac{w(E)}{w(B)}\le C\lf(\frac{|E|}{|B|}\r)^\dz$$
(see \cite[p.\,401, Theorem 2.9]{gf85}). Clearly this
inequality implies that $(\rd,\,|\cdot|,\,w(x)dx)$
has the $\dz$-annular decay property.

(iii) Mac\'ias, Segovia and Torrea \cite{mst} introduced the
condition $(H_\az)$ with $\az\in(0,\,1]$ on a space of homogeneous
type. Recall that a doubling metric measure space $(\cx,\,d,\,\mu)$
is said to satisfy {\it Condition $(H_\az)$} with $\az\in(0,\,1]$,
if there exists a positive constant $C$ such that for all $x\in\cx$,
$r\in (0,\fz)$ and $s\in (0,r)$,
$$\mu(B(x,\,r+s))-\mu(B(x,\,r-s))\le
C[\mu(B(x,\,r))]^{1-\az}[\mu(B(x,\,s))]^\az.$$ If $\cx$ is an
RD-space, namely, there exist constants $0<\kz\le n$ and $C\ge1$
such that for all $x\in\cx$ and $0<r<2\diam(\cx)$ and
$1\le\lz<2\diam(\cx)/r$,
$$C^{-1}\lz^\kz\mu(B(x,\,r))\le\mu(B(x,\,\lz r))\le
C\lz^n\mu(B(x,\,r))$$ (see \cite{yz09}), where
$\diam(\cx)\equiv\sup_{x,\,y\in\cx}d(x,\,y)$, then there exists a
positive constant $C$ such that for all $x\in\cx$, $s\in (0,\fz)$
and $r\in (s, \fz)$,
$$\mu(B(x,\,r+s))-\mu(B(x,\,r))\le C\lf(\frac{s}{r}\r)^{\kz\az}\mu(B(x,\,r)).$$
This shows that for an RD-space, Condition $(H_\az)$  with
$\az\in(0,\,1]$ implies the $\dz$-annular decay property.

(iv) $(\hh^n,\,d,\,dx)$, the $(2n+1)$-dimensional Heisenberg group
$\hh^n$ with a left-invariant metric $d$ and the Lebesgue measure
$dx$. Buckley \cite{b99} showed that $(\hh^n,\,d,\,dx)$ is
a doubling metric measure space having the $\dz$-annular decay
property for all $\dz\in(0,\,1]$.

(v) $({\bbg},\,d,\,\mu)$, the nilpotent Lie group ${\bbg}$ with a
Carnot-Carath\'eodory (control) distance $d$ and a left invariant
Haar measure $\mu$. Fix a left invariant Haar measure $\mu$ on
$\bbg$. Then for all $x\in \bbg$, $V_r(x)=V_r(e)$; moreover, there
exist $\kz$, $D\in(0, \fz)$ with $\kz\le D$ such that for all
$x\in\bbg$, $C^{-1} r^\kz\le V_r(x)\le Cr^\kz$ when $r\in(0, 1]$,
and $C^{-1} r^D\le V_r(x)\le Cr^D$ when $r\in(1, \fz)$; see
\cite{nsw85} and \cite{vsc92}. By Proposition \ref{p4.1}
and the fact that $({\bbg},\,d)$ is a length space, we
know that $({\mathbb \bbg},\,d,\,\mu)$ is a doubling metric measure
space having the $\dz$-annular decay property for some
$\dz\in(0,\,1]$.
\end{example}

{\bf Acknowledgements.} Dachun Yang would like to thank Professor
Romain Tessera for some helpful discussions on Section 4 of this
paper. All authors sincerely wish to express
their deeply thanks to the referee for her/his very
carefully reading and also her/his so many valuable
and suggestive remarks, especially some remarks on
the proof of Proposition \ref{p4.1}, which lead us to obtain Theorem \ref{t4.1}
and much improve the presentation of this
article.

\bigskip

\noindent Haibo Lin

\medskip

\noindent College of Science, China Agricultural University, Beijing
100083, People's Republic of China

\medskip

\noindent{\it E-mail address}: \texttt{haibolincau@126.com}

\bigskip

\noindent Eiichi Nakai

\medskip

\noindent Department of Mathematics, Osaka Kyoiku University,
Kashiwara, Osaka 582-8582, Japan

\medskip

\noindent{\it E-mail address}: \texttt{enakai@cc.osaka-kyoiku.ac.jp}

\bigskip

\noindent Dachun Yang (Corresponding author)

\medskip

\noindent School of Mathematical Sciences, Beijing Normal
University, Laboratory of Mathematics and Complex systems, Ministry
of Education, Beijing 100875, People's Republic of China

\medskip

\noindent{\it E-mail address}: \texttt{dcyang@bnu.edu.cn}

\end{document}